\documentclass[pdflatex,sn-mathphys-num]{sn-jnl}

\usepackage{geometry}
\usepackage{graphicx}%
\usepackage{threeparttable}
\usepackage{multirow}%
\usepackage{amssymb,amsmath,amsfonts,amsthm,mathrsfs,mathtools}
\usepackage[title]{appendix}%
\usepackage{xcolor}
\usepackage{textcomp}
\usepackage{manyfoot}
\usepackage{booktabs}%
\usepackage{algorithm}%
\usepackage{algorithmicx}%
\usepackage{algpseudocode}%
\usepackage{listings}%
\usepackage{enumitem}
\numberwithin{equation}{section}

\usepackage{tabularx}
\usepackage{microtype}
\usepackage{ragged2e}

\definecolor{journalblue}{RGB}{22,58,110}
\definecolor{accentteal}{RGB}{0,112,118}
\definecolor{pitchgray}{RGB}{78,84,92}
\definecolor{rulegray}{RGB}{190,197,205}
\definecolor{panelblue}{RGB}{239,244,250}

\newcommand{\pitchsection}[1]{%
  \vspace{0.65em}%
  {\large\bfseries\color{journalblue}#1}\par
  \vspace{0.18em}%
  {\color{rulegray}\hrule height 0.6pt}%
  \vspace{0.38em}%
}

\newcommand{\advanceitem}[2]{%
  \item \textbf{\color{journalblue}#1}\;#2%
}
\theoremstyle{thmstyleone}%
\newtheorem{theorem}{Theorem}[section]
\newtheorem{proposition}[theorem]{Proposition}
\newtheorem{lemma}[theorem]{Lemma}
\newtheorem{corollary}[theorem]{Corollary}
\theoremstyle{definition}
\newtheorem{definition}[theorem]{Definition}

\newtheorem{example}[theorem]{Example}
\theoremstyle{remark}
\newtheorem{remark}[theorem]{Remark}

\begin{document}
\small
\title{From Flows to Maps: Sampling Laws for Attractor Intensity and Bounded-Noise Escape}

\author[1]{\fnm{Jiguang} \sur{Yu}}
\email{jyu678@bu.edu}
\equalcont{These authors contributed equally to this work.}

\author*[2]{\fnm{Louis Shuo} \sur{Wang}}
\email{wang.s41@northeastern.edu}
\equalcont{These authors contributed equally to this work.}

\affil[1]{
  \orgdiv{College of Engineering}, 
  \orgname{Boston University}, 
  \orgaddress{
    \city{Boston}, 
    \state{MA}, 
    \postcode{02215}, 
    \country{United States}
  }
}

\affil[2]{
  \orgdiv{Department of Mathematics}, 
  \orgname{Northeastern University}, 
  \orgaddress{
    \city{Boston}, 
    \state{MA}, 
    \postcode{02115}, 
    \country{United States}
  }
}

\abstract{
Intensity of attraction quantifies the largest amplitude of a persistent bounded disturbance that an attractor can withstand without loss of controlled confinement in its basin. Although intensity has been formulated separately for flows and maps, its behavior under temporal sampling has remained unresolved. We establish an explicit correspondence between the intensity $\mu(A)$ of a continuous-time attractor and the intensity $\mu_h(A)$ of its exact time-$h$ map. For an $L$-Lipschitz vector field,
\[
\frac{\mu(A)}{1+Lh}
\leq
\frac{\mu_h(A)}{h}
\leq
\mu(A)\frac{e^{Lh}-1}{Lh},
\]
and hence $\mu_h(A)/h\to\mu(A)$. The resulting first-order rate is sharp in general, while smooth scalar escape geometries can exhibit second-order convergence. We extend the framework to one-step numerical methods through a stability theory for block intensity and to attracting invariant graphs over compact invertible nonautonomous bases, obtaining uniform sampling convergence over the forcing phase. For bounded-support random perturbations, normalized discrete intensity is identified with the pathwise safety threshold; above it, finite escape follows under an explicit finite-exit condition, while escape probabilities require additional assumptions on the noise law. We also show that the discrete state--normal boundary map converges to the normalized Pontryagin boundary system governing extremal reachable-set boundaries. Exact scalar benchmarks, a grazing resilience model, planar Duffing escape, anisotropic disturbances, periodic and quasiperiodic forcing, and transfer-operator computations illustrate the theory. These results give intensity estimated from discrete observations or simulations a sampling-independent continuous-time meaning.}

\keywords{
intensity of attraction;
attractor robustness;
resilience;
controlled reachability;
reachable-set boundary dynamics}

\pacs[MSC Classification]{37C70, 93B03, 37B55, 37M15, 37H30}

\maketitle

% ==============================================================================
% RESEARCH HIGHLIGHTS
% ==============================================================================
\begingroup
\setlength{\parindent}{0pt}
\setlength{\parskip}{0pt}
\setlength{\fboxsep}{8pt}
\renewcommand{\arraystretch}{1.08}
\vspace{2em}

\newpage

{\small\bfseries\color{accentteal}\MakeUppercase{Research highlights}}
\hfill
\vspace{0.35em}

{\fontsize{21}{23}\selectfont\bfseries\color{journalblue}
From Flows to Maps:}\par
\vspace{0.18em}
{\large\itshape Sampling Laws for Attractor Intensity and Bounded-Noise Escape}\par
\vspace{0.55em}

% --- Editorial hook ----------------------------------------------------------
\colorbox{panelblue}{%
  \parbox{\dimexpr\textwidth-2\fboxsep\relax}{%
    \textbf{The gap.}
    Intensity quantifies the smallest amplitude of bounded disturbance capable
    of driving trajectories from an attractor beyond its basin.  The existing theories for flows
    and maps, however, use different physical units, while observations,
    numerical integrators, and bounded-noise models act at discrete sampling
    times.
    \textbf{Our resolution.}
    We identify the exact normalization and prove a quantitative bridge from
    continuous controlled reachability to discrete pseudotrajectories.  The
    resulting framework extends uniformly to nonautonomous invariant graphs,
    numerical one-step maps, convex disturbance geometries, and bounded-support
    random perturbations.
  }%
}

% --- Central result ----------------------------------------------------------
\pitchsection{Central theorem}

For the time-$h$ map $F_h=\phi^h$ of a Lipschitz flow, under the stated compact
trapping and controlled-reachability hypotheses,
\[
\quad
  \frac{\mu(A)}{1+Lh}
  \;\leq\;
  \frac{\mu_h(A)}{h}
  \;\leq\;
  \mu(A)\frac{e^{Lh}-1}{Lh}
  \quad
  \qquad\Longrightarrow\qquad
  \frac{\mu_h(A)}{h}\longrightarrow\mu(A).
\]
Thus a displacement per sampling step becomes a disturbance rate only after
division by $h$.  The proof is constructive in both directions: discrete kicks
are interpolated by bounded controls, and controlled trajectories are converted
into discrete pseudotrajectories using Gr\"onwall estimates.

% --- Innovations -------------------------------------------------------------
\pitchsection{Five mathematical advances}

\begin{enumerate}[
  label=\textbf{\color{accentteal}\arabic*.},
  leftmargin=1.75em,
  itemsep=0.28em,
  topsep=0pt,
  parsep=0pt
]
  \advanceitem{Exact sampling law.}{Two-sided bounds give a dimensionally
  consistent and sampling-independent interpretation of intensity, with the
  general estimate $\mu_h/h=\mu+O(h)$.}

  \advanceitem{Stable numerical approximation.}{On a common trapping block,
  block intensity is Lipschitz with respect to the one-step map.  Consequently,
  an order-$q$ scheme, together with the required attractor approximation,
  yields $\mu_{B,h}^{\Psi}/h=\mu_B+O(h)+O(h^q)$.}

  \advanceitem{Uniform nonautonomous theory.}{For an attracting invariant graph
  over an invertible compact base flow, fiberwise intensity satisfies the same
  sampling bounds uniformly over all forcing phases.}

  \advanceitem{Sharp bounded-support threshold.}{For
  $X_{k+1}=\phi^h(X_k)+\xi_k$ with
  $\operatorname{supp}\xi_k\subseteq hrC$, the worst-case threshold is
  $r_{\mathrm{esc},C}(h)=\mu_{h,C}(A)/h$.  Below it every admissible realization
  is safe; above it an admissible finite escape sequence exists.}

  \advanceitem{Boundary dynamics and optimal control.}{At smooth exposed
  boundary points, the discrete inverse-transpose normal map converges to the
  Pontryagin boundary system.  Convex disturbance bodies enter naturally
  through their support functions.}
\end{enumerate}

% --- Evidence and significance ----------------------------------------------
\pitchsection{Evidence, scope, and significance}

\begin{tabularx}{\textwidth}{@{}>{\RaggedRight\arraybackslash}X@{\hspace{1.35em}}>{\RaggedRight\arraybackslash}X@{}}
  \textbf{Computational evidence.}
  An exactly solvable scalar benchmark verifies the bounds and their sharpness;
  independent boundary-system, sampled-map, grid, replay, and optimization
  calculations recover the same planar Duffing threshold; periodic and
  quasiperiodic graph experiments confirm phase-uniform convergence; and an
  absorbing transfer operator, validated by Monte Carlo, separates pathwise
  safety from observable escape probability.
  &
  \textbf{Conceptual impact.}
  Intensity measures global basin escape and therefore contains information
  absent from local eigenvalues, recovery rates, and hyperbolic continuation.
  The results also show why long-time trajectory agreement with an averaged
  model does not, by itself, preserve basin geometry or intensity.  Unbounded
  noise is deliberately excluded from the pathwise threshold theory and instead
  belongs to exit-time, quasipotential, and large-deviation analysis.
\end{tabularx}

\vspace{0.75em}
\colorbox{panelblue}{%
  \parbox{\dimexpr\textwidth-2\fboxsep\relax}{%
    {\small\color{pitchgray}
    \textbf{Editorial relevance.}
    The paper connects rigorous nonlinear dynamics, control-theoretic
    reachability, numerical analysis, nonautonomous systems, and noise-induced
    transitions through one computable global resilience quantity.}%
  }%
}
\vspace{2em}
\endgroup
% ==============================================================================
\newpage

\section{Introduction and main results}
\label{sec:introduction}

\subsection{Motivation}
\label{subsec:intro-motivation}

Resilience describes the ability of a dynamical system to retain its qualitative behavior under perturbations. While classical theory often relies on structural stability to guarantee persistence under sufficiently small errors \citep{conley1971isolated}, capturing true robustness requires determining the exact threshold of structural alteration a system can endure before an original attractor is destroyed. In systems with several competing regimes, a basic manifestation is the ability of an attractor to withstand disturbances without being driven across the boundary of its basin. This arises in ecology, climate dynamics, epidemiology, engineering, and control \citep{holling1973resilience,scheffer2001catastrophic,wang2025analysis,lenton2008tipping}. A loss of resilience may precede a classical bifurcation \citep{scheffer2009early,kuehn2011mathematical}, but it may also occur without any local change in the attractor itself: the attractor can remain hyperbolic while transient dynamics or basin geometry makes escape increasingly easy, and transitions may be induced by noise or by the rate of parameter drift rather than by a static bifurcation \citep{ashwin2012tipping,wieczorek2011excitability,wang2025analysis1,ashwin2017parameter}. From a modeling perspective, physically relevant noise is often intrinsically bounded \citep{d2013bounded}. Under such bounded disturbances, trajectories may be confined to forward-invariant structures known as minimal attractors, making the analysis of their boundaries crucial for understanding system robustness.

Many resilience indicators are local---eigenvalues, recovery rates, or the response to small displacements \citep{krakovska2024resilience}---and do not determine the amplitude of a persistent disturbance needed to force the system out of its basin; global probabilistic or geometric measures such as basin stability address part of this gap \citep{menck2013basin,menck2014dead,liang2025global,mitra2015integrative,klinshov2016stability,wang2025multi}. The \emph{intensity of attraction} measures this global robustness deterministically. McGehee gave a metric notion for continuous maps \citep{mcgehee1988some}; Meyer and McGehee formulated it for ordinary differential equations through controlled reachability \citep{meyer2022intensity}: intensity is the largest uniform amplitude of an additive control that can be applied indefinitely without allowing trajectories to escape. Evaluating resilience through these bounded, exogenous continuous-time disturbances captures the dynamic strength across transient regions, effectively reframing worst-case environmental forcing as invariant reachable sets in differential inclusions \citep{li2007morse,meyer2016mathematical}. It is therefore complementary to hyperbolicity and to variance- or autocorrelation-based early-warning signals: it reports the finite disturbance amplitude a state can still absorb, and (as we show for a bistable grazing model) vanishes at a tipping fold while remaining informative away from it.

The continuous-time theory does not resolve an issue unavoidable in applications: models are formulated as differential equations, but observations, simulations, control, and disturbances act at discrete times.
If $\phi^t$ is the flow of an ODE and $A$ an attractor, a simulation with step $h>0$ studies the time-$h$ map $F_h=\phi^h$. Flow and sampled map share the unperturbed invariant set, but their perturbations differ in meaning: a continuous disturbance is a rate, whereas a perturbation of $F_h$ is a displacement per step. Hence the discrete intensity $\mu_h(A)$ is not directly comparable with the continuous intensity $\mu(A)$; the dimensionally consistent quantity is $\mu_h(A)/h$. This paper answers: \emph{does the normalized intensity of the sampled dynamics converge to the intensity of the flow as $h\downarrow0$?} We answer affirmatively, with explicit two-sided bounds, via quantitative inclusions between continuous controlled reachable sets and discrete perturbed reachable sets.
This connection tightly links the infinite-dimensional problem of attractor persistence to reachable set computation in modern control theory \citep{de2011level,wang2026algebraic,rieger2016euler}.
Furthermore, while observing the boundaries of minimal attractors under discrete bounded perturbations via traditional set-oriented numerical methods is highly computationally expensive \citep{dellnitz2001algorithms}, our analytical inclusions provide a rigorous and more direct mathematical bridge. The same inclusions fix the scaling under which bounded kicks, numerical one-step errors, and bounded-support noise approximate a continuous control, and they extend to numerical methods, to attracting invariant graphs over a nonautonomous base, and to the reachable-set boundary dynamics.

\subsection{Relation to existing theories}
\label{subsec:intro-literature}

Our attractor and basin notions follow the standard theory of attractors and isolated invariant sets \citep{conley1978isolated,hurley1982attractors,wang2026damage,milnor1985concept,norton1995fundamental}. Intensity in the map setting is McGehee's \citep{mcgehee1988some}; in the flow setting it is Meyer and McGehee's controlled-reachability quantity \citep{meyer2022intensity}. Our results differ from structural stability and hyperbolic continuation \citep{hirsch1970invariant,robinson1998dynamical,wang2026breakdown}, which give persistence of a nearby invariant object but not the largest persistent-disturbance amplitude preventing basin escape. Instead of relying on the qualitative persistence guaranteed by sufficiently small errors, our approach asks in precise metric terms how different a vector field can become while still retaining an original attractor. This explicitly reduces the infinite-dimensional problem of attractor persistence to the well-studied computation of reachable sets in a control framework \citep{baier2013approximation,yu2026pattern,de2011level}.

The bounded-noise results are distinct from large-deviation and quasipotential theory. Freidlin--Wentzell theory quantifies rare transitions under small \emph{unbounded} noise through action minimization and quasipotentials
\citep{freidlinrandom,kifer1988random,dembo2009large,berglund2006noise}, with a numerical literature on minimum-action paths \citep{heymann2008geometric,cameron2012finding,grafke2019numerical,liu2025bidirectional}. Intensity is instead an amplitude-constrained worst-case quantity; for bounded support it decides whether escape is impossible for every admissible realization or possible for some admissible finite sequence. By reframing a dynamical system subjected to bounded, time-varying disturbances as a differential inclusion, the set reachable from a deterministic attractor remains robustly invariant under the worst-case constraints \citep{thieme2022conley}. Consequently, bounded-noise dynamics are naturally set-valued \citep{zmarrou2007bifurcations,botts2012hopf,callaway2017dichotomy}. 

The nonautonomous theory uses cocycles and skew-products, consistent with pullback, uniform, and random attractors \citep{arnold2006random,cai2026optimal,kloeden2011nonautonomous,carvalho2012attractors,yu2026rigorous,crauel1994attractors,crauel1997random,rasmussen2007attractivity}. The boundary equations relate to the Pontryagin maximum principle
\citep{pontryagin2018mathematical,lee1967foundations,clarke1998nonsmooth,yu2026beyond} and to boundary descriptions of differential inclusions and reachable sets \citep{cardaliaguet1999set,mitchell2005time,yu2026microscopic,baier2007stability,gao2022rolling}; a general nonautonomous boundary-system formalism is developed in \citep{kourliouros2026invariant,wang2026elliptic}, and bounded-noise boundary maps in \citep{lamb2026bifurcations}. We do not claim priority for that formalism: our contribution is the quantitative bridge between exact sampling, intensity, bounded-noise escape, and boundary propagation.

\subsection{Main results}
\label{subsec:intro-contributions}

Let $f:\mathbb R^d\to\mathbb R^d$ generate a forward-complete flow $\phi^t$; let $A$ be a compact attractor with basin $\mathcal D(A)$, continuous intensity $\mu(A)$, and $F_h=\phi^h$ with discrete intensity $\mu_h(A)$ (Section~\ref{sec:intensity}).

\emph{(1) Exact sampling.} If $f$ is globally $L$-Lipschitz, then Theorem~\ref{thm:sampling} gives
\begin{equation}
\label{eq:intro-sampling-bound}
    \frac{\mu(A)}{1+Lh}
    \le
    \frac{\mu_h(A)}{h}
    \le
    \mu(A)\,\frac{e^{Lh}-1}{Lh},
    \qquad\text{so}\qquad
    \frac{\mu_h(A)}{h}=\mu(A)+O(h),
\end{equation}
and the first-order rate is sharp (Example~\ref{ex:sharp-scalar}). The two bounds come from complementary reachability inclusions: a discrete $\varepsilon$-kick is interpolated by a control of amplitude $(1/h+L)\varepsilon$, and a control of amplitude $r$ produces one-step error at most $r(e^{Lh}-1)/L$ by Gr\"onwall.

\emph{(2) Numerical discretization.} For a one-step method $\Psi_h$ of order $q$ with defect $O(h^{q+1})$ on a common trapping block $B$, a block-intensity stability theorem (Theorem~\ref{thm:block-intensity-map-stability}) yields, under the continuation hypotheses of Theorem~\ref{thm:numerical-intensity}, $\mu^{\Psi}_{h,B}(A_h)/h=\mu_B(A)+O(h)+O(h^{q})$, equal to $\mu(A)+O(h)+O(h^q)$ when $B$ realizes the full intensity.

\emph{(3) Bounded-noise escape.} For $X_{k+1}=\phi^h(X_k)+\xi_k$ with $\operatorname{supp}\xi_k\subseteq \overline B_{hr}(0)$, the worst-case escape threshold satisfies $r_{\mathrm{esc}}(h)=\mu_h(A)/h\to\mu(A)$
(Corollary~\ref{cor:bounded-noise}): if $hr<\mu_h(A)$ escape is impossible for every admissible realization, while if $hr>\mu_h(A)$ an admissible finite escape sequence exists. Turning existence into positive-probability or almost-sure escape needs further probabilistic hypotheses (Proposition~\ref{prop:probabilistic-escape}).

\emph{(4) Nonautonomous invariant graphs.} Over a compact metric flow $(\Omega,\theta)$ with $\dot x=f(\theta_t\omega,x)$ possessing a continuous attracting invariant graph $\mathcal A$, we define a uniform fiberwise intensity $\mu_{\mathrm{na}}(\mathcal A)$. If $f(\omega,\cdot)$ is uniformly $L$-Lipschitz and the base flow is invertible, Theorem~\ref{thm:nonautonomous-sampling} gives the bounds \eqref{eq:intro-sampling-bound} with $\mu,\mu_h$ replaced by
$\mu_{\mathrm{na}},\mu_{\mathrm{na},h}$; invertibility aligns arbitrary continuous endpoints with the sampling grid.

\emph{(5) Boundary dynamics.} For Euclidean kicks of radius $hr$, the discrete boundary map on the state--normal bundle,
\[
    \beta_{h,r}(x,n)
    =\bigl(\phi^h(x)+hr\,\eta_h,\ \eta_h\bigr),
    \qquad
    \eta_h=\frac{D\phi^h(x)^{-T}n}{\|D\phi^h(x)^{-T}n\|},
\]
is a first-order-consistent discretization (Theorem~\ref{thm:boundary-limit}) of the normalized Pontryagin system $\dot x=f(x)+rn$, $\dot n=-(I-nn^\top)Df(x)^\top n$, with Hamiltonian $H_r(x,p)=p^\top f(x)+r\|p\|$; the anisotropic case replaces $rn$ by $r\nabla h_C(n)$.

The five results and their locations across the paper are as follows. Section~\ref{sec:intensity} sets up continuous and discrete intensity; Section~\ref{sec:sampling} proves the sampling and numerical theorems; Section~\ref{sec:nonautonomous} treats invariant graphs and averaging; Section~\ref{sec:noise-boundary} gives the bounded-noise threshold and boundary systems; Section~\ref{sec:examples} reports the numerical experiments. 

\section{Intensity and controlled reachability}
\label{sec:intensity}

This section establishes a common reachability framework for continuous flows, discrete maps, bounded disturbances, and numerical one-step methods. Our definitions are based on the metric formulation for maps in \citep{mcgehee1988some} and the control-theoretic formulation for ordinary differential equations in \citep{meyer2022intensity}. We make the compact-containment requirement explicit throughout. This requirement is stronger than merely asking that every reachable point lie in the basin, and it is essential for obtaining estimates that are stable under sampling and numerical perturbation.

Throughout the paper, $\|\cdot\|$ denotes the Euclidean norm on $\mathbb R^d$. For a set $S\subset\mathbb R^d$ and a point $x\in\mathbb R^d$, we write $\operatorname{dist}(x,S):=\inf_{y\in S}\|x-y\|$. If $K$ is a compact subset of an open set $U$, we write $K\Subset U$. Equivalently, $\operatorname{dist}\bigl(K,\mathbb R^d\setminus U\bigr)>0$. Open and closed Euclidean balls are denoted by $B_r(x)$ and $\overline B_r(x)$, respectively.

\subsection{Continuous-time and discrete-time intensity}
\label{subsec:flow-intensity}

Consider the autonomous differential equation
\begin{equation}
\label{eq:autonomous-system}
    \dot x=f(x),\qquad x\in\mathbb R^d,
\end{equation}
where $f:\mathbb R^d\to\mathbb R^d$ is locally Lipschitz and \eqref{eq:autonomous-system} is forward complete. We denote its flow by $\phi^t(x)=\phi(t,x)$, $t\geq0$.

\begin{definition}[Attractor and basin]
\label{def:attractor-basin}
A nonempty compact set $A\subset\mathbb R^d$ is an attractor for \eqref{eq:autonomous-system} if it is invariant, $\phi^t(A)=A$, $t\geq0$, and there exists an open neighborhood $U$ of $A$ such that $\lim_{t\to\infty}\operatorname{dist}\bigl(\phi^t(x),A\bigr)=0$ for every $x\in U$. The basin of attraction of $A$ is \[
    \mathcal D(A)
    :=
    \left\{
    x\in\mathbb R^d:
    \lim_{t\to\infty}
    \operatorname{dist}\bigl(\phi^t(x),A\bigr)=0
    \right\}.
\]
\end{definition}

Under Definition~\ref{def:attractor-basin}, $\mathcal D(A)$ is open and positively invariant. More restrictive definitions of an attractor, for example uniform attraction of compact subsets of a neighborhood, may be used without affecting the results below.

We perturb \eqref{eq:autonomous-system} by an additive measurable control:
\begin{equation}
\label{eq:controlled-system}
    \dot x=f(x)+u(t).
\end{equation}
For an interval $I\subset[0,\infty)$ and $r>0$, define
\begin{equation}
\label{eq:control-class}
    \mathcal U_r(I)
    :=
    \left\{
    u\in L^\infty(I;\mathbb R^d):
    \|u\|_{\infty,I}<r
    \right\},
\end{equation}
where
    $\|u\|_{\infty,I}
    :=
\operatorname*{ess\,sup}_{t\in I}\|u(t)\|$. We set $\mathcal U_0(I):=\{0\}$. The solution of \eqref{eq:controlled-system} with $x(0)=x_0$ is denoted by $x_u(t;x_0)$. Whenever necessary, forward completeness of the controlled system is understood for the control amplitudes under consideration. It follows automatically, for example, if $f$ is globally Lipschitz.

For a set $S\subset\mathbb R^d$, the exact-time reachable set is
\begin{equation}
\label{eq:exact-reachable-flow}
    \mathcal R_r(S,t)
    :=
    \left\{
    x_u(t;x_0):
    x_0\in S,\;
    u\in\mathcal U_r([0,t])
    \right\},
    \qquad t\geq0.
\end{equation}
The finite-time reachable tube and the unrestricted reachable set are defined by
\begin{align}
    \mathcal P_r(S;T)
    &:=
    \bigcup_{0\leq t\leq T}\mathcal R_r(S,t),
    \qquad T\geq0, \nonumber\\
\label{eq:infinite-reachable-flow}
    \mathcal P_r(S)
    &:=
    \bigcup_{T>0}\mathcal P_r(S;T)
     =
    \bigcup_{t\geq0}
    \bigcup_{\substack{x_0\in S\\
                      u\in\mathcal U_r([0,t])}}
    \{x_u(t;x_0)\}.
\end{align}
Thus $\mathcal P_r(A)$ contains every point that can be reached from the attractor by a control of uniform amplitude strictly smaller than $r$, over an arbitrary finite time interval. Figure~\ref{fig:basin} demonstrates continuous intensity, the compact safety margin inside the basin, and the interpolation of a discrete perturbed orbit.

\begin{figure}[htbp]
    \centering
    \includegraphics[width=0.45\linewidth]{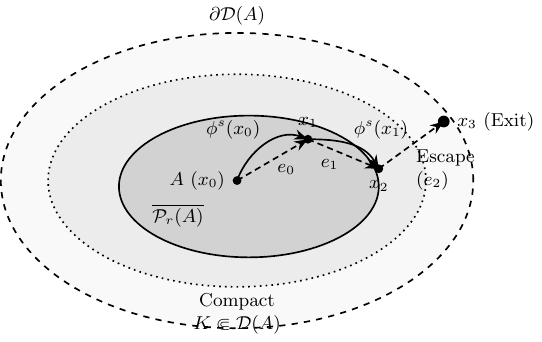}
    \caption{The reachability and exact sampling schematic. It demonstrates continuous intensity, the compact safety margin inside the basin, and the interpolation of a discrete perturbed orbit.}
    \label{fig:basin}
\end{figure}

\begin{definition}[Continuous-time intensity]
\label{def:continuous-intensity}
Let $A$ be an attractor of \eqref{eq:autonomous-system}. Its continuous-time intensity is
\begin{equation}
\label{eq:continuous-intensity}
    \mu(A)
    :=
    \sup\left\{
    r\geq0:
    \overline{\mathcal P_r(A)}
    \subset K\Subset\mathcal D(A)
    \text{ for some compact }K
    \right\}.
\end{equation}
\end{definition}

Equivalently, $\overline{\mathcal P_r(A)}$ is compact and contained in $\mathcal D(A)$; the intermediate $K$ makes the uniform safety margin $\operatorname{dist}(\overline{\mathcal P_r(A)},\mathbb R^d\setminus\mathcal D(A))>0$ explicit.

\begin{remark}[Why compact containment is necessary]
\label{rem:compact-containment}
The weaker requirement $\mathcal P_r(A)\subset\mathcal D(A)$ gives no uniform distance to $\partial\mathcal D(A)$ and permits escape to infinity within an unbounded basin; neither is stable under perturbation of $F_h$. Compact containment rules out both.
\end{remark}

The strict inequality in \eqref{eq:control-class} is useful when comparing reachable sets at different amplitudes. Replacing it by $\|u\|_{\infty,I}\leq r$ and taking closures leads to the same critical value under the standard continuous-dependence assumptions. We retain the strict convention because it agrees naturally with open metric perturbations of a map.

Having established the intensity for continuous-time flows, we now develop a parallel framework for discrete-time systems. This is an essential step toward analyzing sampled-data systems and numerical integration schemes, where the continuous flow is replaced by a time-$h$ map $F_h \approx \phi^h$.

Let $F:\mathbb R^d\to\mathbb R^d$ be continuous and let $A$ be a compact attractor for the discrete dynamical system
\begin{equation}
\label{eq:discrete-system}
    x_{k+1}=F(x_k).
\end{equation}
Its basin is
\[
    \mathcal D_F(A)
    :=
    \left\{
    x\in\mathbb R^d:
    \lim_{k\to\infty}
    \operatorname{dist}\bigl(F^k(x),A\bigr)=0
    \right\}.
\]
An $\varepsilon$-perturbed orbit of $F$ is a sequence satisfying
\begin{equation}
\label{eq:perturbed-map}
    x_{k+1}=F(x_k)+e_k,
    \qquad
    \|e_k\|<\varepsilon.
\end{equation}
Equivalently, $x_{k+1}\in B_\varepsilon(F(x_k))$. For a set $S\subset\mathbb R^d$, define recursively
\begin{align}
\label{eq:discrete-exact-reachable}
    \mathcal R_\varepsilon^F(S,0)
    &:=
    S,\\
    \mathcal R_\varepsilon^F(S,k+1)
    &:=
    \bigcup_{x\in\mathcal R_\varepsilon^F(S,k)}
    B_\varepsilon(F(x)). \nonumber
\end{align}
The finite-step and unrestricted reachable sets are
\begin{align*}
    \mathcal P_\varepsilon^F(S;N)
    &:=
    \bigcup_{k=0}^N
    \mathcal R_\varepsilon^F(S,k),\\
    \mathcal P_\varepsilon^F(S)
    &:=
    \bigcup_{N\geq0}
    \mathcal P_\varepsilon^F(S;N)
     =
    \bigcup_{k\geq0}
    \mathcal R_\varepsilon^F(S,k).
\end{align*}
For $\varepsilon=0$, the recursion is interpreted as $\mathcal R_0^F(S,k+1):=F(\mathcal R_0^F(S,k))$, so that $\mathcal P_0^F(S)$ is the forward orbit of $S$; the literal reading of \eqref{eq:discrete-exact-reachable} would give $B_0=\varnothing$. When $S=A$ is invariant, both readings give $\mathcal P_0^F(A)=A$, but the convention matters for the block intensities of Section~\ref{sec:sampling}, whose seed sets need not be invariant.

\begin{definition}[Discrete-time intensity]
\label{def:discrete-intensity}
The intensity of the attractor $A$ for the map $F$ is
\[
    \mu_F(A)
    :=
    \sup\left\{
    \varepsilon\geq0:
    \overline{\mathcal P_\varepsilon^F(A)}
    \subset K\Subset\mathcal D_F(A)
    \text{ for some compact }K
    \right\}.
\]
\end{definition}

When $F=F_h:=\phi^h$ is the time-$h$ map of \eqref{eq:autonomous-system}, the discrete and continuous basins coincide. Since this identity is used throughout the paper---the discrete intensity of Definition~\ref{def:discrete-intensity} is measured against $\mathcal D_{F_h}(A)$, while the sampling arguments of Section~\ref{sec:sampling} work with compact containment in $\mathcal D(A)$---we record it with a proof.

\begin{lemma}[Basin identity under sampling]
\label{lem:basin-identity}
Let $A$ be an attractor of \eqref{eq:autonomous-system} in the sense of Definition~\ref{def:attractor-basin}, and let $F_h=\phi^h$ for some $h>0$. Then $A$ is a compact attractor of the discrete system $x_{k+1}=F_h(x_k)$, and
\begin{equation}
\label{eq:basin-identity}
    \mathcal D_{F_h}(A)=\mathcal D(A).
\end{equation}
\end{lemma}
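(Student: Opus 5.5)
We prove the basin identity \eqref{eq:basin-identity} first and then read off the attractor property. Both properties of the discrete system reduce to statements about $\phi^t$ by two facts: the sampled orbit is a subsequence of the flow orbit, $F_h^k(x)=\phi^{kh}(x)$, and invariance transfers directly, $F_h(A)=\phi^h(A)=A$. The inclusion $\mathcal D(A)\subset\mathcal D_{F_h}(A)$ is then immediate. If $\operatorname{dist}(\phi^t(x),A)\to0$ as $t\to\infty$, then in particular $\operatorname{dist}(\phi^{kh}(x),A)\to0$ along $t_k=kh$.

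The reverse inclusion is the main step, since we must recover convergence of the whole flow orbit from convergence along the grid. Take $x\in\mathcal D_{F_h}(A)$, so that $y_k:=\phi^{kh}(x)$ satisfies $\operatorname{dist}(y_k,A)\to0$. Writing $t=kh+s$ with $s\in[0,h)$, we have $\phi^t(x)=\phi^s(y_k)$. The goal is to show $\sup_{s\in[0,h]}\operatorname{dist}(\phi^s(y_k),A)\to0$ as $k\to\infty$. For this we use uniform continuity of the flow on a compact neighbourhood. Fix a compact neighbourhood $N=\{z:\operatorname{dist}(z,A)\le1\}$ of $A$. By forward completeness and local Lipschitz continuity of $f$, the map $(s,z)\mapsto\phi^s(z)$ is continuous on $[0,h]\times\mathbb R^d$, hence uniformly continuous on the compact set $[0,h]\times N$.

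We then argue by compactness. Given $\varepsilon>0$, uniform continuity yields $\delta\in(0,1)$ such that $\|z-a\|<\delta$, $a\in A$, and $s\in[0,h]$ imply $\|\phi^s(z)-\phi^s(a)\|<\varepsilon$. Because $\phi^s(a)\in A$ by invariance, this gives $\operatorname{dist}(\phi^s(z),A)<\varepsilon$. For $k$ large we have $\operatorname{dist}(y_k,A)<\delta$, and choosing $a\in A$ nearest to $y_k$ (possible since $A$ is compact) gives $\operatorname{dist}(\phi^t(x),A)<\varepsilon$ for all $t\ge kh$. Hence $x\in\mathcal D(A)$. The only delicate point is that the uniform continuity must be taken on a compact neighbourhood of $A$ rather than on $A$ itself, and the nearest-point choice handles this.

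Finally, $A$ is a compact attractor for $F_h$ in the analogous discrete sense. It is nonempty, compact, and satisfies $F_h(A)=A$. The neighbourhood $U$ from Definition~\ref{def:attractor-basin} attracts under $F_h$, because $U\subset\mathcal D(A)=\mathcal D_{F_h}(A)$. If a stronger uniform notion of attraction is used, the same uniform-continuity estimate over one sampling period transfers it between flow and map, since attraction along the grid and attraction in continuous time differ only by a time shift in $[0,h]$.
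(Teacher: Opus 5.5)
Your proof is correct and follows essentially the same route as the paper. The forward inclusion comes from restricting to grid times. The reverse inclusion uses uniform continuity of $(s,y)\mapsto\phi^s(y)$ on $[0,h]\times N$ with $N=\{y:\operatorname{dist}(y,A)\le1\}$, a nearest point in $A$, and invariance $\phi^s(A)=A$. The differences are cosmetic: your $\varepsilon$--$\delta$ phrasing takes $\delta<1$ to keep $z\in N$, and you derive attraction of $U$ from the full identity, whereas the easy inclusion $\mathcal D(A)\subset\mathcal D_{F_h}(A)$ already suffices.
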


\begin{proof}
Invariance $F_h(A)=\phi^h(A)=A$ is immediate. If $x\in\mathcal D(A)$, then $\operatorname{dist}(\phi^t(x),A)\to0$ along all $t\to\infty$, in particular along the grid times $t=kh$; hence $\mathcal D(A)\subset\mathcal D_{F_h}(A)$, and $A$ attracts the neighborhood $U$ of Definition~\ref{def:attractor-basin} under iteration of $F_h$, so $A$ is a discrete attractor.

Conversely, let $x\in\mathcal D_{F_h}(A)$ and set $x_k:=F_h^k(x)=\phi^{kh}(x)$, so that $\operatorname{dist}(x_k,A)\to0$. The set $N:=\{y\in\mathbb R^d:\operatorname{dist}(y,A)\leq1\}$ is compact, and $x_k\in N$ for all sufficiently large $k$. By compactness of $A$, choose $a_k\in A$ with $\|x_k-a_k\|=\operatorname{dist}(x_k,A)\to0$. The map $(s,y)\mapsto\phi^s(y)$ is continuous, hence uniformly continuous on the compact set $[0,h]\times N$, and therefore
\[
    \sup_{0\leq s\leq h}
    \bigl\|\phi^s(x_k)-\phi^s(a_k)\bigr\|
    \longrightarrow0
    \qquad(k\to\infty).
\]
Since $\phi^s(a_k)\in\phi^s(A)=A$ by invariance, every $t=kh+s$ with $s\in[0,h)$ satisfies
\[
    \operatorname{dist}\bigl(\phi^t(x),A\bigr)
    =
    \operatorname{dist}\bigl(\phi^s(x_k),A\bigr)
    \leq
    \bigl\|\phi^s(x_k)-\phi^s(a_k)\bigr\|,
\]
and the right-hand side tends to zero as $t\to\infty$. Hence $x\in\mathcal D(A)$, which proves $\mathcal D_{F_h}(A)\subset\mathcal D(A)$.
\end{proof}

Note that the proof uses only continuity of the flow together with compactness and invariance of $A$. In view of \eqref{eq:basin-identity}, we abbreviate $\mu_h(A):=\mu_{F_h}(A)$.

\begin{remark}[Physical dimensions]
\label{rem:dimensions}
With state unit $[X]$ and time unit $[T]$, $[\mu(A)]=[X]/[T]$ while the discrete kick has unit $[X]$, so $[\mu_F(A)]=[X]$; the dimensionally consistent continuous-time quantity for a time-$h$ map is $\mu_h(A)/h$, which Section~\ref{sec:sampling} shows converges to $\mu(A)$.
\end{remark}

The same distinction applies to a numerical one-step map $\Psi_h$. Its discrete intensity is measured per numerical step, while $\mu_h^\Psi/h$ is the corresponding disturbance rate per unit physical time.

\subsection{Reachability algebra and trapping regions}
\label{subsec:reachability-properties}

While Section~\ref{subsec:flow-intensity} establish the theoretical framework for robust intensity, calculating the infinite-time reachable sets directly is generally intractable. In this subsection, 
we provide the operational tools to estimate these sets. We first establish the basic algebraic properties of reachability, and then introduce geometric trapping regions to obtain verifiable lower bounds on the intensities.
Although the proofs in this part are standard in control theory \citep{colonius2000dynamics}, we include them to fix the precise strict-control convention.

\begin{lemma}[Monotonicity and concatenation]
\label{lem:reachability-algebra}
For $S,S_1,S_2\subset\mathbb R^d$: (i) $S_1\subset S_2$, $r_1\le r_2$ imply $\mathcal R_{r_1}(S_1,t)\subset\mathcal R_{r_2}(S_2,t)$ and $\mathcal P_{r_1}(S_1)\subset\mathcal P_{r_2}(S_2)$; (ii) $\mathcal R_r(\mathcal R_r(S,t),s)=\mathcal R_r(S,t+s)$ \label{eq:continuous-concatenation}; (iii) $\mathcal P_r(\mathcal P_r(S))=\mathcal P_r(S)$ is the smallest controlled forward invariant set containing $S$ \label{eq:reachable-idempotent}; (iv) if $A$ is flow-invariant then $\phi^\tau(\mathcal P_r(A))\subset\mathcal P_r(A)$ for $\tau\ge0$ \label{eq:flow-forward-invariance}; and (v) the analogous identities hold for the discrete reachable sets $\mathcal R^F_\varepsilon,\mathcal P^F_\varepsilon$
\label{eq:discrete-concatenation}.
\end{lemma}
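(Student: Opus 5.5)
The plan is to argue directly from the definitions \eqref{eq:exact-reachable-flow}--\eqref{eq:infinite-reachable-flow} of the reachable sets. The only analytic facts needed are existence and uniqueness of Carath\'eodory solutions of \eqref{eq:controlled-system} for $u\in L^\infty$, together with the observation that the class $\mathcal U_r$ is closed under restriction and under time-shifts. For (i), I fix $x_0\in S_1$ and $u\in\mathcal U_{r_1}([0,t])$. Then $\|u\|_{\infty}<r_1\le r_2$, so $u\in\mathcal U_{r_2}([0,t])$. The same point $x_u(t;x_0)$ is therefore admissible for $\mathcal R_{r_2}(S_2,t)$, and taking the union over $t$ gives the statement for $\mathcal P$. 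The degenerate convention $\mathcal U_0=\{0\}$ is covered because $0\in\mathcal U_r$ for every $r>0$.

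For (ii), the key step is concatenation of controls and the strict-bound convention. Suppose $y=x_u(t;x_0)$ and $z=x_v(s;y)$ with $u,v$ admissible. I define $w=u$ on $[0,t)$ and $w(\tau)=v(\tau-t)$ on $[t,t+s]$. Then $\|w\|_\infty=\max(\|u\|_\infty,\|v\|_\infty)<r$. Uniqueness of solutions, together with the autonomy of $f$, gives $x_w(t+s;x_0)=z$. Conversely, given $w\in\mathcal U_r([0,t+s])$, I split it at time $t$ and restrict. Both pieces have essential sup at most $\|w\|_\infty<r$, and the time-shifted second piece drives $x_w(t;x_0)$ to $x_w(t+s;x_0)$.

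I expect the main obstacle to be the strict inequality in the concatenation. With infinitely many pieces the supremum could reach $r$, which is why only finite concatenation is used. Finite concatenation suffices because each point of $\mathcal P_r$ involves a single finite time.

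For (iii), (ii) gives $\mathcal P_r(\mathcal P_r(S))=\bigcup_{t,s}\mathcal R_r(S,t+s)=\mathcal P_r(S)$. The inclusion $\supset$ uses $t=0$, where $\mathcal R_r(S,0)=S$. Minimality works as follows. Call $M$ controlled forward invariant if $\mathcal R_r(M,t)\subset M$ for all $t\geq0$. If such an $M$ contains $S$, then by (i) $\mathcal P_r(S)\subset\mathcal P_r(M)\subset M$, while (ii) shows that $\mathcal P_r(S)$ itself is controlled forward invariant.

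For (iv), I take $u=0\in\mathcal U_r$, which gives $\phi^\tau(x)=x_0(\tau;x)\in\mathcal R_r(\{x\},\tau)$. Hence $\phi^\tau(\mathcal P_r(A))\subset\mathcal P_r(\mathcal P_r(A))=\mathcal P_r(A)$ by (iii); invariance of $A$ is not even needed beyond this.

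For (v), the discrete statements follow by induction on $k$ from the recursion \eqref{eq:discrete-exact-reachable}. Monotonicity holds because $B_{\varepsilon_1}(F(x))\subset B_{\varepsilon_2}(F(x))$. Concatenation $\mathcal R^F_\varepsilon(\mathcal R^F_\varepsilon(S,k),j)=\mathcal R^F_\varepsilon(S,k+j)$ follows by induction on $j$, since the recursion is a union over base points. The analogue of (iv) uses $F(x)\in B_\varepsilon(F(x))$ for $\varepsilon>0$. The case $\varepsilon=0$ is handled separately by the stated convention, under which it is trivial.
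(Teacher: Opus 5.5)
Your proof is correct and follows the same route as the paper: monotonicity by enlarging the initial sets and control classes, concatenation using that the maximum of two essential suprema below $r$ stays below $r$ (with restriction for the converse), idempotence via zero additional time, (iv) by appending the zero control, and the discrete case by concatenating finite kick sequences. You also spell out two points the paper leaves implicit: the minimality argument for (iii), and the observation that flow-invariance of $A$ is not actually needed for (iv).
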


\begin{proof}
The first assertion follows immediately by enlarging the set of initial conditions and the admissible control class. To prove \eqref{eq:continuous-concatenation}, concatenate a control on $[0,t]$ with a control on $[t,t+s]$. The concatenated control remains admissible because the maximum of two numbers strictly smaller than $r$ is still strictly smaller than $r$. Conversely, every admissible control on $[0,t+s]$ can be restricted to the two subintervals. This proves equality.

Equation \eqref{eq:reachable-idempotent} follows from concatenation, while the inclusion in the opposite direction follows by choosing zero additional time. It also proves the minimality statement. To obtain \eqref{eq:flow-forward-invariance}, append the zero control for time $\tau$ to a control that reaches the given point from $A$. The discrete statements follow by concatenating finite sequences of perturbations.
\end{proof}

The next result gives a convenient sufficient condition for positive intensity.

\begin{definition}[Robust trapping region]
\label{def:robust-trapping-region} 
Let $B\subset\mathbb R^d$ be compact with $B=\overline{\operatorname{int}B}$, $A\subset\operatorname{int}B \subset B\subset\mathcal D(A)$. We call $B$ an $r$-robust trapping region if every controlled trajectory of \eqref{eq:controlled-system} starting in $B$ remains in $B$ for all forward times whenever $\|u\|_\infty<r$.
\end{definition}

\begin{proposition}[Inward-pointing criterion]
\label{prop:inward-trapping}
Suppose that $B$ has $C^1$ boundary and let $n_B(x)$ be its outward unit normal at $x\in\partial B$. If there exist $r>0$ and $\gamma>0$ such that
\begin{equation}
\label{eq:inward-condition-ball}
    \langle n_B(x),f(x)\rangle+r
    \leq-\gamma
    \qquad
    \text{for every }x\in\partial B,
\end{equation}
then $B$ is an $r$-robust trapping region. Consequently,
$\overline{\mathcal P_r(A)}
    \subset B\Subset\mathcal D(A)$
and
$\mu(A)\geq r$.
\end{proposition}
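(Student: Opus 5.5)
The plan is a first-exit argument for a controlled trajectory, followed by the monotonicity of Lemma~\ref{lem:reachability-algebra}. Fix $u$ with $\|u\|_{\infty}<r$ and $x_0\in B$, and let $x(t)=x_u(t;x_0)$. This trajectory is absolutely continuous and satisfies $\dot x=f(x)+u(t)$ almost everywhere. Suppose, for contradiction, that it leaves $B$. Set $t^*:=\sup\{t\ge0: x([0,t])\subset B\}$. Since $B$ is closed, $x(t^*)\in B$, and by maximality of $t^*$ we must have $x(t^*)\in\partial B$. Moreover, there are times $t_j\downarrow t^*$ with $x(t_j)\notin B$.

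To exploit \eqref{eq:inward-condition-ball}, I would use a $C^1$ local defining function. Near $p:=x(t^*)$, the $C^1$ boundary gives a $C^1$ function $g$ with $B=\{g\le0\}$ locally, $\nabla g=|\nabla g|\,n_B$ on $\partial B$, and $\nabla g\neq0$. The composition $\psi(t):=g(x(t))$ is absolutely continuous with $\psi'(t)=\langle\nabla g(x(t)),f(x(t))+u(t)\rangle$ almost everywhere.

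The main obstacle is that $u$ is only measurable, so we cannot evaluate $\psi'(t^*)$ pointwise. I would handle this with a continuity-plus-integration estimate. At $p$, Cauchy--Schwarz and \eqref{eq:inward-condition-ball} give
\[
\langle n_B(p),f(p)+u\rangle\le \langle n_B(p),f(p)\rangle+\|u\|<-\gamma
\]
for every $\|u\|<r$. By continuity of $\nabla g$, $f$ and $x(\cdot)$, there is $\delta>0$ such that for almost every $t\in[t^*,t^*+\delta]$,
\[
\psi'(t)\le|\nabla g(p)|\bigl(\langle n_B(p),f(p)\rangle+r\bigr)+\tfrac{\gamma}{2}|\nabla g(p)|\le-\tfrac{\gamma}{2}|\nabla g(p)|<0.
\]
Here the essential bound $\|u(t)\|<r$ is used inside the integrand. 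Integrating then gives $\psi(t)<\psi(t^*)=0$ for $t\in(t^*,t^*+\delta]$, so $x(t)\in B$ on this interval. This contradicts the existence of the times $t_j\downarrow t^*$ with $x(t_j)\notin B$. Hence $B$ is $r$-robustly trapping. The margin $\gamma>0$ is exactly what lets the strict inequality survive the continuity perturbation. In fact the local argument would even work with $\gamma=0$ because $\|u\|_\infty<r$ is strict, but keeping $\gamma$ makes the estimate uniform.

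For the consequences: since $A\subset\operatorname{int}B$, every trajectory starting in $A$ starts in $B$, so $\mathcal P_r(A)\subset B$. Because $B$ is closed, $\overline{\mathcal P_r(A)}\subset B$. The set $B$ is compact and contained in the open set $\mathcal D(A)$, so $B\Subset\mathcal D(A)$, and taking $K=B$ in Definition~\ref{def:continuous-intensity} shows that $r$ is admissible. Therefore $\mu(A)\ge r$.

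The same argument applies for every $r'\le r$ via Lemma~\ref{lem:reachability-algebra}(i), although this is not needed for the supremum. Forward completeness of the controlled system is assumed as in the text. Alternatively, the trapping conclusion itself prevents blow-up, since the trajectory stays in the compact set $B$.
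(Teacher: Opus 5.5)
Your proof is correct and follows the same route as the paper: a first-exit argument showing that, under \eqref{eq:inward-condition-ball}, every admissible velocity points strictly into $B$ at the boundary, followed by taking $K=B$ in Definition~\ref{def:continuous-intensity}. Your local defining function and your integration over a short interval $[t^*,t^*+\delta]$ make rigorous the paper's pointwise ``almost every boundary time'' sketch, and this is the right way to handle a control that is merely measurable.
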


\begin{proof}
For every $u$ satisfying $\|u\|_\infty<r$ and almost every time at which a controlled trajectory lies on $\partial B$,
\[
    \langle n_B(x),f(x)+u(t)\rangle\leq
    \langle n_B(x),f(x)\rangle+\|u(t)\|<
    \langle n_B(x),f(x)\rangle+r
    \leq-\gamma.
\]
Thus all admissible velocities point strictly into $B$ at its boundary. A first-exit argument, equivalently the standard viability theorem, implies positive invariance of $B$. Since $A\subset B$, every point reachable from $A$ lies in $B$. Compactness of $B$ and $B\subset\mathcal D(A)$ give the asserted compact containment.
\end{proof}

For nonsmooth trapping regions, condition \eqref{eq:inward-condition-ball} may be replaced by a tangent-cone condition in the sense of viability theory \citep{aubin1982differential}. The smooth formulation is sufficient for the geometric applications considered here.

There is a direct discrete analogue.

\begin{proposition}[Discrete trapping criterion]
\label{prop:discrete-trapping}
Let $F:\mathbb R^d\to\mathbb R^d$ be continuous and let
    $A\subset\operatorname{int}B
    \subset B\Subset\mathcal D_F(A)$,
where $B$ is compact. If
\begin{equation}
\label{eq:discrete-trapping-condition}
    \overline B_\varepsilon(F(x))
    \subset\operatorname{int}B
    \qquad
    \text{for every }x\in B,
\end{equation}
then
$\overline{\mathcal P_\varepsilon^F(A)}
    \subset B$
and
$\mu_F(A)\geq\varepsilon$.
\end{proposition}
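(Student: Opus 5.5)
The plan is an induction on the number of steps, showing $\mathcal R_\varepsilon^F(A,k)\subset\operatorname{int}B$ for every $k\ge0$, followed by closure and compactness.

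The base case is the hypothesis $A\subset\operatorname{int}B$. For the inductive step, suppose $\mathcal R_\varepsilon^F(A,k)\subset\operatorname{int}B\subset B$. Any point of $\mathcal R_\varepsilon^F(A,k+1)$ has the form $y\in B_\varepsilon(F(x))$ with $x\in\mathcal R_\varepsilon^F(A,k)\subset B$. Condition \eqref{eq:discrete-trapping-condition} then gives $y\in\overline B_\varepsilon(F(x))\subset\operatorname{int}B$. Taking the union over $k$ yields $\mathcal P_\varepsilon^F(A)\subset\operatorname{int}B\subset B$. Since $B$ is compact, hence closed, we get $\overline{\mathcal P_\varepsilon^F(A)}\subset B$. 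Taking $K:=B$, which satisfies $B\Subset\mathcal D_F(A)$ by hypothesis, shows that $\varepsilon$ belongs to the set in Definition~\ref{def:discrete-intensity}. Hence $\mu_F(A)\ge\varepsilon$.

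Equivalently, one could note that $B$ is forward invariant under $\varepsilon$-perturbed orbits and apply the discrete part (v) of Lemma~\ref{lem:reachability-algebra}. I will write out the direct induction instead, since it is shorter and sidesteps the $\varepsilon=0$ convention: for $\varepsilon=0$ the recursion is $\mathcal R_0^F(A,k+1)=F(\mathcal R_0^F(A,k))$, and $F(x)\in\overline B_0(F(x))\subset\operatorname{int}B$ handles this case identically.

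There is no genuine obstacle. The only point needing care is that the definition of intensity uses the closure of the reachable set, and closedness of $B$ is exactly what absorbs it. Nothing in the argument uses uniform continuity of $F$ or any margin between $\overline B_\varepsilon(F(B))$ and $\partial B$, though such a margin, which exists by compactness and continuity, would also give a strict bound $\mu_F(A)>\varepsilon$ if that were wanted.
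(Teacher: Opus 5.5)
Your proposal is correct and follows the paper's proof. Induction on the number of steps shows that every $\varepsilon$-perturbed orbit from $A$ stays in $B$ (you show the sharper fact that it stays in $\operatorname{int}B$), and taking the union over steps and then the closure gives $\overline{\mathcal P_\varepsilon^F(A)}\subset B$, with closedness of $B$ absorbing the closure and $K:=B\Subset\mathcal D_F(A)$ witnessing that $\varepsilon$ is admissible in Definition~\ref{def:discrete-intensity}.
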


\begin{proof}
Condition \eqref{eq:discrete-trapping-condition} and induction imply that every $\varepsilon$-perturbed orbit beginning in $A$ remains in $B$. Taking the union over all steps and then its closure yields the result.
\end{proof}

\subsection{Boundary-contact characterization and structural properties}
\label{subsec:first-contact}

In this subsection, we establish three structural properties of intensity: its equivalence to a boundary-contact threshold, its covariance under time rescaling, and its generalization to anisotropic disturbance geometries.
The compact-containment definition suggests characterizing intensity as the smallest disturbance amplitude at which the reachable set contacts the basin boundary. Such a characterization is valid when loss of compact containment cannot occur by escape to infinity. The qualification is automatic for bounded basins but must be stated for unbounded ones.

Define
    $\mu_{\partial}(A)
    :=
    \inf\left\{
    r>0:
    \overline{\mathcal P_r(A)}
    \cap\partial\mathcal D(A)
    \neq\varnothing
    \right\}$,
where the infimum of the empty set is $+\infty$.

\begin{proposition}[First-contact characterization]
\label{prop:first-contact}
Suppose that one of the following conditions holds:
\begin{enumerate}
\item the basin $\mathcal D(A)$ is bounded; or
\item there exists a compact set $Q\subset\mathbb R^d$ such that
$\overline{\mathcal P_r(A)}\subset Q$ for every $0\leq r<\mu_{\partial}(A)$.
\end{enumerate}
Then
\begin{equation}
\label{eq:intensity-first-contact}
    \mu(A)=\mu_{\partial}(A).
\end{equation}
Equivalently, intensity is the smallest control amplitude at which the closure of the reachable set first contacts the basin boundary.
\end{proposition}

\begin{proof}
Let $0\leq r<\mu_{\partial}(A)$. By definition,
    $\overline{\mathcal P_r(A)}
    \cap\partial\mathcal D(A)=\varnothing$.
A controlled trajectory beginning in $A\subset\mathcal D(A)$ cannot leave the open basin without first meeting its boundary. Hence
    $\overline{\mathcal P_r(A)}
    \subset\mathcal D(A)$.
Under condition (ii), the reachable-set closure is a closed subset of the compact set $Q$ and is therefore compact. Under condition (i), the same conclusion follows from
    $\overline{\mathcal P_r(A)}
    \subset\overline{\mathcal D(A)}$,
because the closure of a bounded basin in $\mathbb R^d$ is compact. Thus every $r<\mu_{\partial}(A)$ is admissible in Definition~\ref{def:continuous-intensity}, and $\mu(A)\geq\mu_{\partial}(A)$. Conversely, let $r>\mu_{\partial}(A)$. By monotonicity of reachable sets in the amplitude, the set of amplitudes at which $\overline{\mathcal P_r(A)}$ meets $\partial\mathcal D(A)$ is upward closed; by the definition of the infimum it therefore contains every $r>\mu_{\partial}(A)$, so $\overline{\mathcal P_r(A)}$ meets $\partial\mathcal D(A)$. It therefore cannot be contained in any compact subset of $\mathcal D(A)$. Consequently, $r$ is not admissible in \eqref{eq:continuous-intensity}, and $\mu(A)\leq\mu_{\partial}(A)$. Combining the inequalities proves \eqref{eq:intensity-first-contact}.
\end{proof}

\begin{remark}[Escape to infinity]
\label{rem:escape-infinity}
Without a hypothesis of Proposition~\ref{prop:first-contact} the identity can fail: if $\mathcal D(A)=\mathbb R^d$ then $\partial\mathcal D(A)=\varnothing$ yet strong controls make the reachable set unbounded, so $\mu_\partial(A)=+\infty$ while $\mu(A)$ is finite.
\end{remark}

For discrete maps, a perturbed orbit may jump from the basin to its complement without meeting the boundary at an intermediate state. The appropriate discrete threshold therefore refers to the complement of the basin rather than to literal boundary contact. We now make this precise; the result is used in Section~\ref{sec:noise-boundary} to identify sampled intensity with a worst-case bounded-noise escape threshold.

\begin{proposition}[Discrete first-contact characterization]
\label{prop:discrete-first-contact}
Let $F:\mathbb R^d\to\mathbb R^d$ be continuous and let $A$ be a compact attractor of \eqref{eq:discrete-system} that attracts an open neighborhood $U$. Define the exit threshold
\begin{equation}
\label{eq:discrete-exit-threshold}
    \varepsilon_{\mathrm{exit}}(A)
    :=
    \inf\left\{
    \varepsilon>0:
    \begin{array}{l}
    \text{some finite $\varepsilon$-perturbed orbit
    \eqref{eq:perturbed-map}}\\
    \text{with }x_0\in A
    \text{ satisfies }
    x_N\notin\mathcal D_F(A)
    \end{array}
    \right\},
\end{equation}
with $\inf\varnothing:=+\infty$. Suppose that there exists a compact set $Q\subset\mathbb R^d$ such that
\begin{equation}
\label{eq:discrete-no-escape-infinity}
    \overline{\mathcal P_\varepsilon^F(A)}\subset Q
    \qquad
    \text{for every }
    0\leq\varepsilon<\varepsilon_{\mathrm{exit}}(A).
\end{equation}
Then
\begin{equation}
\label{eq:discrete-first-contact-identity}
    \mu_F(A)=\varepsilon_{\mathrm{exit}}(A),
\end{equation}
and, moreover, $\overline{\mathcal P_\varepsilon^F(A)}\subset\mathcal D_F(A)$ for every $0\leq\varepsilon<\varepsilon_{\mathrm{exit}}(A)$.
\end{proposition}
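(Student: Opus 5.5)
The plan is to prove the two inequalities $\mu_F(A)\geq\varepsilon_{\mathrm{exit}}(A)$ and $\mu_F(A)\leq\varepsilon_{\mathrm{exit}}(A)$ separately. The lower bound also yields the containment $\overline{\mathcal P_\varepsilon^F(A)}\subset\mathcal D_F(A)$. The only delicate point is that the definition of $\varepsilon_{\mathrm{exit}}$ controls the points of $\mathcal P_\varepsilon^F(A)$ but says nothing directly about its closure. I plan to handle this with an amplitude-slack argument, which exploits the strict inequality $\|e_k\|<\varepsilon$.

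\emph{Lower bound.} Fix $0\leq\varepsilon<\varepsilon_{\mathrm{exit}}(A)$ and choose $\varepsilon'$ with $\varepsilon<\varepsilon'<\varepsilon_{\mathrm{exit}}(A)$. Since $\varepsilon'$ lies below the infimum, every finite $\varepsilon'$-perturbed orbit from $A$ ends in $\mathcal D_F(A)$, so $\mathcal P_{\varepsilon'}^F(A)\subset\mathcal D_F(A)$. For $\varepsilon=0$ we have $\mathcal P_0^F(A)=A$ by invariance, and the claim is trivial.

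I then show $\overline{\mathcal P_\varepsilon^F(A)}\subset\mathcal P_{\varepsilon'}^F(A)$. Let $y$ be in the closure and pick $x\in\mathcal P_\varepsilon^F(A)$ with $\|x-y\|<\varepsilon'-\varepsilon$.
\begin{itemize}
\item If $x$ is only reachable at step $0$, then $x\in A$. Points of this kind near $y$ either accumulate, so $y\in\overline A=A$, or $y$ is a limit of points reached at steps $k\geq1$. In the latter case I simply choose such an $x$.
\item If $x=F(z)+e$ with $z\in\mathcal R_\varepsilon^F(A,k-1)$ and $\|e\|<\varepsilon$, then
\[
y=F(z)+\bigl(e+(y-x)\bigr),\qquad \|e+(y-x)\|<\varepsilon'.
\]
By monotonicity (Lemma~\ref{lem:reachability-algebra}), $z$ is also $\varepsilon'$-reachable, so $y\in\mathcal R_{\varepsilon'}^F(A,k)$.
\end{itemize}
Hence $\overline{\mathcal P_\varepsilon^F(A)}\subset\mathcal P_{\varepsilon'}^F(A)\subset\mathcal D_F(A)$, which proves the final assertion of the proposition.

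By hypothesis \eqref{eq:discrete-no-escape-infinity}, this closure is a closed subset of $Q$ and is therefore compact. Taking $K:=\overline{\mathcal P_\varepsilon^F(A)}$, a compact subset of the open set $\mathcal D_F(A)$, shows that $\varepsilon$ is admissible in Definition~\ref{def:discrete-intensity}. Taking the supremum over $\varepsilon<\varepsilon_{\mathrm{exit}}(A)$ gives $\mu_F(A)\geq\varepsilon_{\mathrm{exit}}(A)$. This also covers the case $\varepsilon_{\mathrm{exit}}(A)=+\infty$, in which the hypothesis holds for all $\varepsilon$ and both sides are $+\infty$.

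\emph{Upper bound.} Assume $\varepsilon_{\mathrm{exit}}(A)<\infty$ and let $\varepsilon>\varepsilon_{\mathrm{exit}}(A)$. By the definition of the infimum there is some $\varepsilon''<\varepsilon$ and a finite $\varepsilon''$-perturbed orbit from $A$ with $x_N\notin\mathcal D_F(A)$. This orbit is also $\varepsilon$-perturbed, so $x_N\in\mathcal P_\varepsilon^F(A)\setminus\mathcal D_F(A)$. Consequently no $K\Subset\mathcal D_F(A)$ can contain $\overline{\mathcal P_\varepsilon^F(A)}$, and $\varepsilon$ is not admissible. Hence $\mu_F(A)\leq\varepsilon_{\mathrm{exit}}(A)$.

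The main obstacle is the closure step in the lower bound. Without the slack $\varepsilon'$, a limit of safe points could lie on $\partial\mathcal D_F(A)$. The strict-kick convention is exactly what absorbs the approximation error $y-x$ into a single last kick of size below $\varepsilon'$.
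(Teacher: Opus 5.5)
Your proof is correct and follows the paper's argument essentially step for step. For $\mu_F(A)\le\varepsilon_{\mathrm{exit}}(A)$ you use the same upward-closedness of the exit set. For the reverse inequality you use the same slack $\varepsilon<\varepsilon'<\varepsilon_{\mathrm{exit}}(A)$, absorbing $y-x$ into the last kick to get $\overline{\mathcal P_\varepsilon^F(A)}\subset A\cup\mathcal P_{\varepsilon'}^F(A)=\mathcal P_{\varepsilon'}^F(A)$, followed by compactness from $Q$.

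The one step you take for granted is that $\mathcal D_F(A)$ is open, which is needed for a compact subset of the basin to be compactly contained in it. The paper verifies this via $\mathcal D_F(A)=\bigcup_{k\ge0}F^{-k}(U)$. This is the only place the hypothesis that $A$ attracts an open neighborhood $U$ enters, and your write-up never uses $U$.
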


\begin{proof}
We first note that $\mathcal D_F(A)$ is open. If $x\in\mathcal D_F(A)$, then $F^k(x)\in U$ for some $k\geq0$; conversely, $F^k(x)\in U$ for some $k$ implies $\operatorname{dist}(F^j(x),A)\to0$ as $j\to\infty$, because $A$ attracts $U$. Hence
\[
    \mathcal D_F(A)
    =
    \bigcup_{k\geq0}F^{-k}(U)
\]
is a union of preimages of an open set under a continuous map.

\emph{Step 1: $\mu_F(A)\leq\varepsilon_{\mathrm{exit}}(A)$.}
We may assume $\varepsilon_{\mathrm{exit}}(A)<\infty$. Let $\varepsilon>\varepsilon_{\mathrm{exit}}(A)$. Since the admissible kick sequences grow with $\varepsilon$, the defining set in \eqref{eq:discrete-exit-threshold} is upward closed, so there is a finite $\varepsilon$-perturbed orbit from $A$ with endpoint $x_N\notin\mathcal D_F(A)$. Then $x_N\in\mathcal P_\varepsilon^F(A)\setminus\mathcal D_F(A)$, so $\overline{\mathcal P_\varepsilon^F(A)}$ is contained in no compact subset of $\mathcal D_F(A)$, and $\varepsilon$ is not admissible in Definition~\ref{def:discrete-intensity}. Letting $\varepsilon\downarrow\varepsilon_{\mathrm{exit}}(A)$ proves the inequality.

\emph{Step 2: $\mu_F(A)\geq\varepsilon_{\mathrm{exit}}(A)$.}
Let $0\leq\varepsilon<\varepsilon_{\mathrm{exit}}(A)$ and choose $\varepsilon'\in(\varepsilon,\varepsilon_{\mathrm{exit}}(A))$. Since $\varepsilon'<\varepsilon_{\mathrm{exit}}(A)$, every finite $\varepsilon'$-perturbed orbit from $A$ remains in $\mathcal D_F(A)$---any point of such an orbit is the endpoint of a truncated orbit---so $\mathcal P_{\varepsilon'}^F(A)\subset\mathcal D_F(A)$.

We claim that
$\overline{\mathcal P_\varepsilon^F(A)}
    \subset
    A\cup\mathcal P_{\varepsilon'}^F(A)$.
Let $z=\lim_j z_j$ with $z_j\in\mathcal P_\varepsilon^F(A)$. If infinitely many $z_j$ are zero-step endpoints, they lie in $A$, and $z\in A$ by closedness. Otherwise, fix $j$ with $\|z-z_j\|<\varepsilon'-\varepsilon$ such that $z_j$ is the endpoint of an orbit with $N\geq1$ steps and final kick $e_{N-1}$; replacing that kick by $e_{N-1}+(z-z_j)$, of norm smaller than $\varepsilon+(\varepsilon'-\varepsilon)=\varepsilon'$, produces an $\varepsilon'$-perturbed orbit from $A$ with endpoint $z$, so $z\in\mathcal P_{\varepsilon'}^F(A)$. This proves the claim, and hence   
$\overline{\mathcal P_\varepsilon^F(A)}
    \subset
    A\cup\mathcal P_{\varepsilon'}^F(A)
    \subset
    \mathcal D_F(A)$.
By \eqref{eq:discrete-no-escape-infinity}, $\overline{\mathcal P_\varepsilon^F(A)}$ is a compact subset of the open set $\mathcal D_F(A)$ and is therefore compactly contained in it. Taking $K=\overline{\mathcal P_\varepsilon^F(A)}$ in Definition~\ref{def:discrete-intensity} shows that $\varepsilon$ is admissible, so $\mu_F(A)\geq\varepsilon$ for every $\varepsilon<\varepsilon_{\mathrm{exit}}(A)$. 
\end{proof}

\begin{remark}[Variants]
\label{rem:discrete-first-contact-variants}
The infimum is unchanged if the kick constraint is relaxed to $\|e_k\|\le\varepsilon$; the statement carries over to the gauge $\gamma_C$ (using $0\in\operatorname{int}C$); and under \eqref{eq:discrete-no-escape-infinity} the exit threshold equals the amplitude at which $\overline{\mathcal P^F_\varepsilon(A)}$ first meets $\mathbb R^d\setminus\mathcal D_F(A)$.
\end{remark}

Beyond the boundary-contact characterization, the intensity measure admits two fundamental structural extensions regarding time and space. First, as a rate, intensity transforms covariantly under time rescaling. Second, the Euclidean disturbance ball can be replaced by an anisotropic convex body to account for heterogeneous physical scales.

For $\lambda>0$, define $f_\lambda(x):=\lambda f(x)$ and let $\mu_{f_\lambda}(A)$ denote the intensity of $A$ for $\dot x=\lambda f(x)$.

\begin{proposition}[Covariance under time rescaling]
\label{prop:time-rescaling}
For every $\lambda>0$,
\begin{equation}
\label{eq:intensity-vector-field-scaling}
    \mu_{f_\lambda}(A)
    =
    \lambda\mu_f(A).
\end{equation}
More precisely, their reachable sets satisfy
\begin{equation}
\label{eq:reachable-time-scaling}
    \mathcal P_r^{\,f_\lambda}(A)
    =
    \mathcal P_{r/\lambda}^{\,f}(A).
\end{equation}
\end{proposition}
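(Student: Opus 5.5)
Time reparametrization turns controlled trajectories of one system into controlled trajectories of the other, so I will prove the reachable-set identity \eqref{eq:reachable-time-scaling} first and read off the intensity identity from it.

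\textbf{Step 1: a bijection of controlled trajectories.} Let $x$ solve $\dot x=\lambda f(x)+u(t)$ on $[0,t]$, with $x(0)=x_0\in A$ and $u\in\mathcal U_r([0,t])$. Define $y(s):=x(s/\lambda)$ for $s\in[0,\lambda t]$. Then $y$ is absolutely continuous and, for almost every $s$,
\[
\dot y(s)=\lambda^{-1}\bigl(\lambda f(y(s))+u(s/\lambda)\bigr)=f(y(s))+v(s),
\qquad v(s):=\lambda^{-1}u(s/\lambda).
\]
The control $v$ is measurable and satisfies $\|v\|_{\infty,[0,\lambda t]}=\lambda^{-1}\|u\|_{\infty,[0,t]}<r/\lambda$. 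The strict-inequality convention in \eqref{eq:control-class} is preserved exactly, so $v\in\mathcal U_{r/\lambda}([0,\lambda t])$. The endpoint $x(t)=y(\lambda t)$ therefore lies in $\mathcal R^{f}_{r/\lambda}(A,\lambda t)$.

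The inverse map is $x(\tau)=y(\lambda\tau)$ with $u(\tau)=\lambda v(\lambda\tau)$, which turns any $f$-trajectory with control of amplitude below $r/\lambda$ into an $f_\lambda$-trajectory with control of amplitude below $r$. This gives
\[
\mathcal R^{f_\lambda}_r(A,t)=\mathcal R^{f}_{r/\lambda}(A,\lambda t)\quad\text{for every } t\ge 0.
\]
The map $t\mapsto\lambda t$ is a bijection of $[0,\infty)$, so taking the union over $t\ge0$ in \eqref{eq:infinite-reachable-flow} yields \eqref{eq:reachable-time-scaling}. Forward completeness transfers along the same correspondence, so solutions are defined on the required intervals.

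\textbf{Step 2: basins and attractors are unchanged.} The uncontrolled flows satisfy $\phi_{f_\lambda}^{t}=\phi_f^{\lambda t}$. Hence $A$ is invariant and attracting for $f_\lambda$ exactly when it is for $f$, and the two basins coincide: $\mathcal D_{f_\lambda}(A)=\mathcal D_f(A)$. This is the analogue, for a continuous reparametrization of time, of the sampling argument in Lemma~\ref{lem:basin-identity}, and here it is immediate.

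\textbf{Step 3: compare the admissible sets.} By Steps 1 and 2, $r$ is admissible in Definition~\ref{def:continuous-intensity} for $f_\lambda$ if and only if $r/\lambda$ is admissible for $f$: the closures of the reachable sets are equal, and they are required to sit compactly inside the same basin. The admissible set for $f_\lambda$ is therefore $\lambda$ times the admissible set for $f$. Taking suprema gives $\mu_{f_\lambda}(A)=\lambda\mu_f(A)$, which also covers the values $0$ and $+\infty$.

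The main point needing care is the first step, namely checking that the rescaled control stays measurable and in the strict class. Both properties are immediate because composition with a linear change of time preserves measurability and scales the essential supremum exactly. I expect no genuine obstacle.
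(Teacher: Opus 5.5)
Your proposal is correct and follows the paper's proof: the time change $s=\lambda t$ with rescaled control $v(s)=\lambda^{-1}u(s/\lambda)$ gives a bijection of controlled trajectories that preserves the strict amplitude class, and the identity $\phi^t_{f_\lambda}=\phi^{\lambda t}_f$ shows both systems have the same attractor and basin, so admissible amplitudes scale by $\lambda$. Your version is slightly more explicit than the paper's, since it records the exact-time identity $\mathcal R^{f_\lambda}_r(A,t)=\mathcal R^{f}_{r/\lambda}(A,\lambda t)$ before taking the union over $t$.
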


\begin{proof}
Consider
$\dot x(t)=\lambda f(x(t))+u(t)$
and introduce the new time variable $s=\lambda t$. Then
    $\frac{dx}{ds}
    =
    f(x)+\frac1\lambda u\left(\frac{s}{\lambda}\right)$.
The rescaled control
    $v(s):=
    \frac1\lambda u\left(\frac{s}{\lambda}\right)$
satisfies
    $\|v\|_\infty=\frac1\lambda\|u\|_\infty$.
Because both systems are considered over arbitrary finite time intervals, this correspondence is bijective at the level of reachable sets, proving \eqref{eq:reachable-time-scaling}. Note also that $f$ and $f_\lambda$ have the same orbits up to time reparametrization, so $A$ is an attractor for both systems with the same basin $\mathcal D(A)$; the two intensities are therefore measured against the same set. Substitution into Definition~\ref{def:continuous-intensity} yields \eqref{eq:intensity-vector-field-scaling}.
\end{proof}

To state the result directly in terms of time units, suppose that $s=\kappa t$. If the original equation is written in $t$-time as
    $\frac{dx}{dt}=f(x)$,
then in $s$-time it becomes $\frac{dx}{ds}=\frac1\kappa f(x)$, and therefore $\mu_s(A)=\frac1\kappa\mu_t(A)$.
Equivalently, if $t=\kappa s$, then
\[
    \frac{dx}{ds}=\kappa f(x),
    \qquad
    \mu_s(A)=\kappa\mu_t(A).
\]
For example, if $\kappa$ is the number of seconds in one year, then an intensity expressed per year equals $\kappa$ times the same physical intensity expressed per second. This covariance must be taken into account when resilience values obtained using different time units are compared.

The second extension is anisotropic disturbance sets. The Euclidean disturbance ball assumes that all state-space directions are equally accessible and equally costly. This is often inappropriate: different state variables may have different physical scales, some directions may be inaccessible, and forcing may obey asymmetric constraints. We therefore introduce an anisotropic version of intensity.

Let $C\subset\mathbb R^d$ be a compact convex set satisfying $0\in\operatorname{int}C$. The set need not be centrally symmetric. Its Minkowski functional is
$\gamma_C(v)
    :=
    \inf\{\rho\geq0:v\in\rho C\}$.
If $C$ is centrally symmetric, $\gamma_C$ is a norm whose closed unit ball is $C$. In the asymmetric case it remains positively homogeneous but generally satisfies $\gamma_C(v)\neq\gamma_C(-v)$. For $r>0$, define
\[
    \mathcal U_{r,C}(I)
    :=
    \left\{
    u\in L^\infty(I;\mathbb R^d):
    \operatorname*{ess\,sup}_{t\in I}\gamma_C(u(t))<r
    \right\}.
\]
The closure of the corresponding controlled dynamics is described by the differential inclusion
\begin{equation}
\label{eq:convex-differential-inclusion}
    \dot x(t)\in f(x(t))+rC.
\end{equation}
Reachable sets $\mathcal R_{r,C}(S,t)$ and $\mathcal P_{r,C}(S)$ are defined as in \eqref{eq:exact-reachable-flow}--\eqref{eq:infinite-reachable-flow}, with $\mathcal U_r$ replaced by $\mathcal U_{r,C}$.

\begin{definition}[$C$-intensity]
\label{def:C-intensity}
The intensity of $A$ relative to the disturbance body $C$ is
\[
    \mu_C(A)
    :=
    \sup\left\{
    r\geq0:
    \overline{\mathcal P_{r,C}(A)}
    \subset K\Subset\mathcal D(A)
    \text{ for some compact }K
    \right\}.
\]
\end{definition}

When $C=\overline B_1(0)$, the Minkowski functional is the Euclidean norm and $\mu_C(A)=\mu(A)$. Scaling the disturbance body and scaling the intensity parameter are redundant:
\[
    \mu_{\alpha C}(A)
    =
    \frac1\alpha\mu_C(A),
    \qquad \alpha>0.
\]
The support function of $C$ is
\begin{equation}
\label{eq:support-function}
    h_C(n)
    :=
    \max_{c\in C}\langle n,c\rangle,
    \qquad n\in\mathbb R^d.
\end{equation}
It gives the maximal instantaneous disturbance velocity in direction $n$. Consequently, the anisotropic version of
\eqref{eq:inward-condition-ball} is
\begin{equation}
\label{eq:anisotropic-inward-condition}
    \langle n_B(x),f(x)\rangle
    +
    r h_C\bigl(n_B(x)\bigr)
    \leq-\gamma,
    \qquad x\in\partial B.
\end{equation}
Under \eqref{eq:anisotropic-inward-condition}, the proof of Proposition~\ref{prop:inward-trapping} shows that $B$ is invariant for \eqref{eq:convex-differential-inclusion} and hence $\mu_C(A)\geq r$.

The following comparison relates anisotropic and Euclidean intensity.

\begin{proposition}[Comparison of disturbance geometries]
\label{prop:C-intensity-comparison}
Suppose that
\begin{equation}
\label{eq:C-ball-comparison}
    \rho\,\overline B_1(0)
    \subset C
    \subset
    R\,\overline B_1(0)
\end{equation}
for some $0<\rho\leq R<\infty$. Then
    $\frac{\mu(A)}{R}
    \leq
    \mu_C(A)
    \leq
    \frac{\mu(A)}{\rho}$.
\end{proposition}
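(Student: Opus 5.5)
The plan is to compare the admissible control classes directly and then pass the comparison through monotonicity of reachable sets (Lemma~\ref{lem:reachability-algebra}(i), which applies verbatim to the classes $\mathcal U_{r,C}$) together with the definitions of $\mu$ and $\mu_C$. The key pointwise fact is the comparison of gauges implied by \eqref{eq:C-ball-comparison}. Since $\gamma_C$ is positively homogeneous and order-reversing in $C$, $\rho\overline B_1(0)\subset C\subset R\overline B_1(0)$ gives
\[
\frac{\|v\|}{R}\le\gamma_C(v)\le\frac{\|v\|}{\rho},\qquad v\in\mathbb R^d .
\]
This holds because $\gamma_{\alpha\overline B_1(0)}(v)=\|v\|/\alpha$ and $C_1\subset C_2$ implies $\gamma_{C_2}\le\gamma_{C_1}$.

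From this we obtain inclusions of control classes for every interval $I$ and every $r>0$. If $\|u\|_{\infty,I}<\rho r$, then $\operatorname{ess\,sup}\gamma_C(u)\le\|u\|_\infty/\rho<r$, so $\mathcal U_{\rho r}(I)\subset\mathcal U_{r,C}(I)$. Likewise, if $\operatorname{ess\,sup}\gamma_C(u)<r$, then $\|u\|_\infty\le R\operatorname{ess\,sup}\gamma_C(u)<Rr$, so $\mathcal U_{r,C}(I)\subset\mathcal U_{Rr}(I)$. Strictness is preserved in both directions because the essential suprema are compared through a fixed multiplicative constant. Taking unions over initial points in $A$ and over times gives
\[
\mathcal P_{\rho r}(A)\subset\mathcal P_{r,C}(A)\subset\mathcal P_{Rr}(A),
\]
and the same inclusions hold for the closures.

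The two bounds then follow by comparing admissible amplitudes. For the lower bound, take any $r<\mu(A)/R$. Then $Rr<\mu(A)$, and by monotonicity in the amplitude (any amplitude below an admissible one is admissible) there is a compact $K\Subset\mathcal D(A)$ with $\overline{\mathcal P_{Rr}(A)}\subset K$. Hence $\overline{\mathcal P_{r,C}(A)}\subset K$, so $r$ is admissible for $\mu_C$, and letting $r\uparrow\mu(A)/R$ gives $\mu_C(A)\ge\mu(A)/R$. For the upper bound, suppose $r$ is admissible for $\mu_C$. Then $\overline{\mathcal P_{\rho r}(A)}\subset\overline{\mathcal P_{r,C}(A)}\subset K$, so $\rho r$ is admissible for $\mu$ and $\rho r\le\mu(A)$. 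Taking the supremum over such $r$ yields $\mu_C(A)\le\mu(A)/\rho$.

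The cases $\mu(A)=0$ and $\mu(A)=\infty$ are covered by the same inclusions. The basin $\mathcal D(A)$ is the same in both definitions, so no further argument is needed there. The only step needing real care is the gauge comparison, in particular the direction of the inclusions for $\gamma_C$ when $C$ is not symmetric. Positive homogeneity alone suffices for this, so I expect no genuine obstacle.
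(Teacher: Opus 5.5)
Your proposal is correct and takes the same route as the paper: it derives the inclusions $\mathcal P_{\rho r}(A)\subset\mathcal P_{r,C}(A)\subset\mathcal P_{Rr}(A)$ and then compares admissible amplitudes in the two definitions. Your version also makes explicit three steps the paper leaves implicit: the gauge comparison, the preservation of the strict inequalities in the control classes, and the monotonicity argument.
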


\begin{proof}
The inclusions in \eqref{eq:C-ball-comparison} imply
    $\mathcal P_{r\rho}(A)
    \subset
    \mathcal P_{r,C}(A)
    \subset
    \mathcal P_{rR}(A)$.
If $rR<\mu(A)$, then $\mathcal P_{rR}(A)$ is compactly contained in the basin, and so is $\mathcal P_{r,C}(A)$. Hence $\mu_C(A)\geq\frac{\mu(A)}{R}$. Conversely, if $\mathcal P_{r,C}(A)$ is compactly contained in the basin, then so is $\mathcal P_{r\rho}(A)$, implying $r\rho\leq\mu(A)$. Taking the supremum over admissible $r$ gives the upper bound.
\end{proof}

A discrete $C$-intensity is defined analogously by replacing \eqref{eq:perturbed-map} with $x_{k+1}=F(x_k)+e_k$, $\gamma_C(e_k)<\varepsilon$. We denote it by $\mu_{F,C}(A)$. If $F=F_h=\phi^h$, then $\mu_{h,C}(A)/h$ is the discrete disturbance rate measured in the geometry determined by $C$. The sampling arguments of Section~\ref{sec:sampling} extend to this setting, with constants adjusted as described in Remark~\ref{rem:anisotropic-sampling-constants} below. For
smooth strictly convex $C$, the maximizer in \eqref{eq:support-function} also determines the anisotropic boundary dynamics developed in Section~\ref{sec:noise-boundary}; see also \citep{kourliouros2026invariant}.

\begin{remark}[Anisotropic sampling constants]
\label{rem:anisotropic-sampling-constants}
The reachability arguments of Section~\ref{sec:sampling} extend to anisotropic disturbance sets by replacing the Euclidean norm with the associated gauge $\gamma_C$. More precisely, the proofs use only the positive homogeneity and subadditivity of the control functional, together with the integral estimates arising from the interpolation and Gr\"onwall arguments. These properties remain valid for a general convex body $C$ containing the origin in its interior, although the gauge is not necessarily symmetric: $\gamma_C(v)\neq \gamma_C(-v)$ may occur. Thus the arguments must be interpreted as one-sided gauge estimates rather than estimates generated by a symmetric distance.

If \eqref{eq:C-ball-comparison} holds, then $\frac{\|v\|}{R} \leq \gamma_C(v) \leq \frac{\|v\|}{\rho}$ for $v\in\mathbb R^d$, and an $L$-Lipschitz vector field satisfies
\[
    \gamma_C\bigl(f(x)-f(y)\bigr)
    \leq
    \frac{L}{\rho}\|x-y\|
    \leq
    L\frac{R}{\rho}\gamma_C(x-y)
    =:
    L_C\gamma_C(x-y).
\]
Consequently, the interpolation estimate of
Lemma~\ref{lem:discrete-step-interpolation} and the one-step error bound
of Lemma~\ref{lem:one-step-control-error} remain valid with $L_C$ in
place of the Euclidean Lipschitz constant, and the time-alignment
argument of Lemma~\ref{lem:sampling-time-alignment} carries over.

Therefore,
\[
    \frac{h\,\mu_C(A)}{1+L_Ch}
    \leq
    \mu_{h,C}(A)
    \leq
    \mu_C(A)\frac{e^{L_Ch}-1}{L_C},
    \qquad
    \frac{\mu_{h,C}(A)}{h}
    =
    \mu_C(A)+O(h).
\]
The factor $R/\rho$ reflects the distortion between the Euclidean norm
and the gauge geometry and cannot be removed without additional
structure. If $f$ is assumed Lipschitz directly with respect to
$\gamma_C$, then the corresponding gauge-Lipschitz constant replaces
$L_C$.
\end{remark}

\section{Sampling and numerical discretization}
\label{sec:sampling}

We now compare the continuous-time intensity of an attractor with the
discrete intensity of its exact time-$h$ map. The main result gives
explicit two-sided bounds and proves that the normalized discrete
intensity converges to its continuous counterpart. The proof rests on
two complementary constructions. First, every perturbed orbit of the
time-$h$ map can be interpolated by a controlled trajectory of the
flow. Second, every controlled trajectory can be sampled to produce a
perturbed orbit of the time-$h$ map. Invariance of the attractor is
then used to align arbitrary continuous-time endpoints with the
sampling grid.

Throughout Sections~\ref{subsec:main-sampling-theorem}--%
\ref{subsec:sampling-consequences}, we consider
$\dot x=f(x)$, $x\in\mathbb R^d$,
with flow $\phi^t$, and write
$F_h:=\phi^h$.
For the principal theorem we impose the following assumptions.
\begin{enumerate}
\renewcommand{\theenumi}{(H\arabic{enumi})}
\renewcommand{\labelenumi}{\textnormal{\theenumi}}
\item
\label{hyp:forward-complete}
The vector field $f:\mathbb R^d\to\mathbb R^d$ is globally
$L$-Lipschitz for some $L\geq0$. In particular, the uncontrolled and
bounded-control systems are forward complete.
\item
\label{hyp:compact-attractor}
The set $A\subset\mathbb R^d$ is a nonempty compact attractor and is
invariant in the strong sense
    $\phi^t(A)=A$, $t\geq0$.
\item
\label{hyp:positive-finite-intensity}
Its continuous intensity satisfies
    $0<\mu(A)<\infty$.
\end{enumerate}
The global Lipschitz assumption can be localized to a common compact
trapping region; see Remark~\ref{rem:localized-sampling}. We first give
the global statement so that the reachability argument is transparent.

For later use, define
\begin{equation}
\label{eq:alpha-h-definition}
    \alpha_h(r)
    :=
    \begin{cases}
    \displaystyle
    r\frac{e^{Lh}-1}{L},
        & L>0,\\[2ex]
    rh, & L=0,
    \end{cases}
\end{equation}
and
\begin{equation}
\label{eq:beta-h-definition}
    \beta_h(\varepsilon)
    :=
    \left(\frac1h+L\right)\varepsilon.
\end{equation}

\subsection{The exact sampling theorem}
\label{subsec:main-sampling-theorem}

We now arrive at the principal result of this part: a quantitative correspondence between the continuous-time intensity $\mu(A)$ and the sampled discrete-time intensity $\mu_h(A)$. The following theorem establishes explicit upper and lower bounds, confirming that $\mu_h(A)/h$ converges to $\mu(A)$ as the sampling period vanishingly approaches zero; see also Figure~\ref{fig:sampling-bridge}.

\begin{theorem}[Intensity under exact sampling]
\label{thm:sampling}
Suppose that \ref{hyp:forward-complete}--\ref{hyp:positive-finite-intensity} hold. For every $h>0$, let $\mu_h(A)$ denote the discrete intensity of $A$ for the time-$h$ map $F_h=\phi^h$. Then
\begin{equation}
\label{eq:sampling-main-bounds}
    \frac{h\mu(A)}{1+Lh}
    \leq
    \mu_h(A)
    \leq
    \alpha_h\bigl(\mu(A)\bigr).
\end{equation}
Equivalently, when $L>0$,
\[
    \frac{\mu(A)}{1+Lh}
    \leq
    \frac{\mu_h(A)}{h}
    \leq
    \mu(A)\frac{e^{Lh}-1}{Lh}.
\]
For $L=0$, both bounds are interpreted by continuity and give $\mu_h(A)=h\mu(A)$. Consequently,
\begin{equation}
\label{eq:sampling-main-limit}
    \lim_{h\downarrow0}\frac{\mu_h(A)}{h}
    =
    \mu(A).
\end{equation}
\end{theorem}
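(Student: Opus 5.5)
The proof rests on two reachability inclusions, one in each direction, followed by a comparison of admissible amplitudes in the two definitions of intensity.

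\emph{Lower bound (kicks to controls).} Fix $\varepsilon>0$ with $\beta_h(\varepsilon)<\mu(A)$. We show $\mathcal P^{F_h}_\varepsilon(A)\subset\mathcal P_{\beta_h(\varepsilon)}(A)$. Take one perturbed step $x_{k+1}=\phi^h(x_k)+e_k$ with $\|e_k\|<\varepsilon$. Interpolate it by the curve $y(s)=\phi^s(x_k)+\frac{s}{h}e_k$ on $[0,h]$. Then $\dot y=f(y)+u(s)$ with
\[
u(s)=\frac{e_k}{h}+f\bigl(\phi^s(x_k)\bigr)-f\bigl(y(s)\bigr),
\]
so $\|u(s)\|\le \|e_k\|/h+L\|e_k\|<\beta_h(\varepsilon)$. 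This is the interpolation lemma, Lemma~\ref{lem:discrete-step-interpolation}. Concatenating these pieces over $k$ (Lemma~\ref{lem:reachability-algebra}) shows every discrete reachable point is continuously reachable with a control of amplitude below $\beta_h(\varepsilon)$. The inclusion passes to closures. By monotonicity there is $r$ with $\beta_h(\varepsilon)<r<\mu(A)$ admissible, so $\overline{\mathcal P_{\beta_h(\varepsilon)}(A)}\subset K\Subset\mathcal D(A)=\mathcal D_{F_h}(A)$, using Lemma~\ref{lem:basin-identity}. Hence $\varepsilon$ is admissible for $\mu_h(A)$. Letting $\beta_h(\varepsilon)\uparrow\mu(A)$ gives $\mu_h(A)\ge h\mu(A)/(1+Lh)$.

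\emph{Upper bound (controls to kicks).} Suppose $\varepsilon<\mu_h(A)$ is admissible, and let $r$ satisfy $\alpha_h(r)<\varepsilon$. By Gr\"onwall, for $\|u\|_\infty<r$,
\[
\|x_u(h;x)-\phi^h(x)\|\le \int_0^h e^{L(h-s)}\|u(s)\|\,ds<\alpha_h(r),
\]
which is Lemma~\ref{lem:one-step-control-error}. Consequently every grid sample $x_u(kh)$ of a controlled trajectory from $A$ lies in $\mathcal P^{F_h}_\varepsilon(A)$.

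The main obstacle is endpoints at non-grid times $t=kh+\tau$ with $\tau\in(0,h)$. This is where the time-alignment lemma, Lemma~\ref{lem:sampling-time-alignment}, enters. By strong invariance $\phi^{h-\tau}(A)=A$, so pick $a'\in A$ with $\phi^{h-\tau}(a')=a$. Start from $a'$ with zero control on $[0,h-\tau]$ and then apply the shifted control. The same endpoint is now reached at the grid time $(k+1)h$. The first window has control bounded by $r$ in sup norm, so the one-step error bound still holds. Thus $\mathcal P_r(A)\subset\mathcal P^{F_h}_\varepsilon(A)$, and its closure is compactly contained in the basin, so $r\le\mu(A)$.

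I expect it will be cleaner to argue by contradiction. Suppose $\alpha_h(\mu(A))<\mu_h(A)$. Choose $r>\mu(A)$ with $\alpha_h(r)<\mu_h(A)$, then an admissible $\varepsilon\in(\alpha_h(r),\mu_h(A))$. The inclusion just proved makes $r$ admissible for $\mu(A)$, which is a contradiction. This also needs the observation that admissibility at $\varepsilon$ implies admissibility at smaller amplitudes, which is immediate from monotonicity.

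Finally, the normalized bounds follow by dividing by $h$. For $L=0$ both sides equal $h\mu(A)$. The limit \eqref{eq:sampling-main-limit} follows from $(e^{Lh}-1)/(Lh)\to1$ and $1/(1+Lh)\to1$.
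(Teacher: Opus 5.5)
Your proposal is correct and follows the paper's proof essentially step for step. It uses the same interpolation $\phi^s(x_k)+\frac{s}{h}e_k$ to get $\mathcal P^{F_h}_\varepsilon(A)\subset\mathcal P_{\beta_h(\varepsilon)}(A)$, the same Gr\"onwall one-step bound combined with invariance-based time alignment to get $\mathcal P_r(A)\subset\mathcal P^{F_h}_{\alpha_h(r)}(A)$, and the same comparison of admissible amplitudes. The only cosmetic difference is that you close the upper bound by contradiction, whereas the paper argues directly with $r=\alpha_h^{-1}(\varepsilon)$ and lets $\varepsilon\uparrow\mu_h(A)$.
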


\begin{figure}[htbp]
  \centering
    \includegraphics[width=0.6\linewidth]{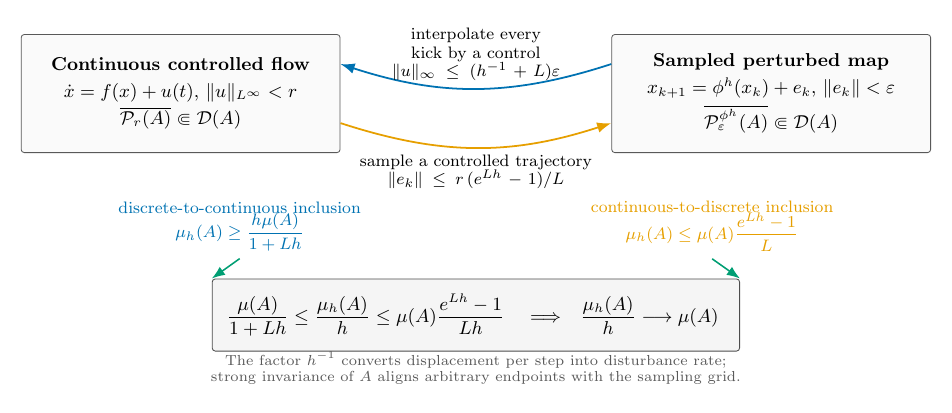}
  \caption{The two reachability constructions underlying the exact sampling
  theorem.  A discrete kick can be interpolated by a bounded continuous
  control (blue), while a controlled trajectory sampled every $h$ units of
  time produces a discrete pseudotrajectory (orange).  The resulting
  inclusions bracket the per-unit-time quantity $\mu_h(A)/h$ and yield its
  convergence to the continuous intensity $\mu(A)$.}
  \label{fig:sampling-bridge}
\end{figure}

\begin{remark}[Zero and infinite intensity]
\label{rem:zero-infinite-intensity}
The assumption $0<\mu(A)<\infty$ isolates the nontrivial case. If $\mu(A)=0$, the upper bound proved below implies $\mu_h(A)=0$ for every $h>0$. Infinite intensity requires a separate extended-real formulation because the upper bound in \eqref{eq:sampling-main-bounds} then carries no finite information.
\end{remark}

\begin{remark}[Localized hypotheses]
\label{rem:localized-sampling}
The proof does not intrinsically require global Lipschitz continuity. It is enough that there exist an open set $U$ and a compact set $K\Subset U$ such that:
\begin{enumerate}
\item $f$ is $L$-Lipschitz on $U$;
\item the continuous and discrete reachable sets used at all
subcritical amplitudes are contained in $K$;
\item for every discrete step occurring in the interpolation argument,
the segments
    $\left\{
    \phi^s(x)+\theta e:
    0\leq s\leq h,\;
    0\leq\theta\leq1
    \right\}$
remain in $U$.
\end{enumerate}
These conditions hold, for example, when a common robust trapping region is contained a positive distance inside $U$ and the admissible one-step perturbations are smaller than that distance. Thus the same bounds hold with a Lipschitz constant valid only on a common trapping neighborhood.
\end{remark}

The proof of Theorem~\ref{thm:sampling} relies on two directional reachability inclusions (continuous-to-discrete and discrete-to-continuous), which we establish next before completing the argument.

\subsection{Relationship between continuous controls and discrete perturbations}
\label{subsec:discrete-to-continuous}

To prove Theorem~\ref{thm:sampling}, we establish a two-way bounding relationship between continuous and discrete reachable sets. We first embed discrete perturbed orbits into continuous controlled trajectories to establish the lower bound, and then reverse the process to establish the upper bound.

Consider one step of an $\varepsilon$-perturbed orbit:
    $x_{k+1}=\phi^h(x_k)+e_k$,
    $\|e_k\|<\varepsilon$.
For $0\leq s\leq h$, define
\begin{equation}
\label{eq:linear-step-interpolation}
    \widetilde x_k(s)
    :=
    \phi^s(x_k)+\frac{s}{h}e_k.
\end{equation}
Then $\widetilde x_k(0)=x_k$, $\widetilde x_k(h)=x_{k+1}$. \eqref{eq:linear-step-interpolation} is a linear interpolation of the endpoint error, and this choice makes it possible to retain the exact flow segment $\phi^s(x_k)$ in the estimate.

\begin{lemma}[Interpolation of one perturbed step]
\label{lem:discrete-step-interpolation}
The curve $\widetilde x_k$ defined by \eqref{eq:linear-step-interpolation} is an absolutely continuous solution of \begin{equation}
\label{eq:interpolation-controlled-equation}
    \dot{\widetilde x}_k(s)
    =
    f\bigl(\widetilde x_k(s)\bigr)+u_k(s),
    \qquad 0\leq s\leq h,
\end{equation}
where
\begin{equation}
\label{eq:interpolation-control}
    u_k(s)
    =
    f\bigl(\phi^s(x_k)\bigr)
    -
    f\bigl(\widetilde x_k(s)\bigr)
    +
    \frac1h e_k.
\end{equation}
Moreover,
\begin{equation}
\label{eq:interpolation-control-bound-refined}
    \|u_k(s)\|
    \leq
    \left(\frac1h+\frac{Ls}{h}\right)\|e_k\|
    \leq
    \left(\frac1h+L\right)\|e_k\|
\end{equation}
for almost every $s\in[0,h]$.
\end{lemma}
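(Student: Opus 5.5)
The plan is to verify the differential identity directly and then bound the control pointwise using the global Lipschitz hypothesis \ref{hyp:forward-complete}.

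\emph{Absolute continuity and the equation.} Since $s\mapsto\phi^s(x_k)$ is a $C^1$ solution of $\dot y=f(y)$ on $[0,h]$ (a classical solution, because $f$ is continuous), and $s\mapsto \frac{s}{h}e_k$ is affine, the sum $\widetilde x_k$ is $C^1$, hence absolutely continuous. Differentiating \eqref{eq:linear-step-interpolation} gives
\[
    \dot{\widetilde x}_k(s)=f\bigl(\phi^s(x_k)\bigr)+\tfrac1h e_k .
\]
Adding and subtracting $f(\widetilde x_k(s))$ rewrites this as $f(\widetilde x_k(s))+u_k(s)$, with $u_k$ exactly as in \eqref{eq:interpolation-control}. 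The function $u_k$ is continuous, hence measurable and bounded on $[0,h]$, so it is an admissible $L^\infty$ control. By uniqueness for the Lipschitz controlled system, $\widetilde x_k$ is \emph{the} solution $x_{u_k}(\cdot;x_k)$. The endpoint values $\widetilde x_k(0)=x_k$ and $\widetilde x_k(h)=\phi^h(x_k)+e_k=x_{k+1}$ are immediate.

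\emph{The bound.} By the triangle inequality and $L$-Lipschitz continuity,
\[
    \|u_k(s)\|\le \bigl\|f(\phi^s(x_k))-f(\widetilde x_k(s))\bigr\|+\tfrac1h\|e_k\|\le L\,\bigl\|\phi^s(x_k)-\widetilde x_k(s)\bigr\|+\tfrac1h\|e_k\| .
\]
The key observation is that $\phi^s(x_k)-\widetilde x_k(s)=-\frac sh e_k$ exactly. This is why retaining the exact flow segment in the interpolation pays off: no Grönwall step is needed. Hence $\|u_k(s)\|\le(\frac1h+\frac{Ls}{h})\|e_k\|$, and since $s\le h$ this is at most $(\frac1h+L)\|e_k\|$, giving \eqref{eq:interpolation-control-bound-refined} for every, not just almost every, $s\in[0,h]$.

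There is no genuine obstacle here. The only point requiring care is the remark that $u_k$ is a bona fide admissible control and that the curve is the unique solution, so that it may later be concatenated via Lemma~\ref{lem:reachability-algebra}. Under the localized hypotheses of Remark~\ref{rem:localized-sampling}, the same computation goes through provided the segment $\{\phi^s(x_k)+\theta e_k\}$ stays in $U$. That is precisely condition (iii), which guarantees that the Lipschitz estimate applies between $\phi^s(x_k)$ and $\widetilde x_k(s)$.
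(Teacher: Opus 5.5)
Your proposal is correct and follows essentially the same route as the paper's proof: differentiate, add and subtract $f(\widetilde x_k(s))$, and apply the Lipschitz bound to the exact identity $\phi^s(x_k)-\widetilde x_k(s)=-\frac{s}{h}e_k$. Your extra observations are valid refinements that the paper leaves implicit: $\widetilde x_k$ is in fact $C^1$, $u_k$ is continuous and hence an admissible control, and the bound holds for every $s$ rather than only almost every $s$.
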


\begin{proof}
Differentiating \eqref{eq:linear-step-interpolation} for almost every $s$ gives
    $\dot{\widetilde x}_k(s)
    =
    f\bigl(\phi^s(x_k)\bigr)+\frac1h e_k$. Adding and subtracting $f(\widetilde x_k(s))$ gives \eqref{eq:interpolation-controlled-equation} and \eqref{eq:interpolation-control}. By Lipschitz continuity,
\[
    \|u_k(s)\|
    \leq
    L\left\|
    \phi^s(x_k)-\widetilde x_k(s)
    \right\|
    +\frac1h\|e_k\|=
    L\frac{s}{h}\|e_k\|
    +\frac1h\|e_k\|,
\]
which proves \eqref{eq:interpolation-control-bound-refined}.
\end{proof}

Concatenating the controls from Lemma~\ref{lem:discrete-step-interpolation} produces an admissible continuous control over the entire discrete trajectory.

\begin{proposition}[Discrete-to-continuous reachability]
\label{prop:discrete-to-continuous}
For every $h>0$, $\varepsilon>0$, and $S\subset\mathbb R^d$,
\begin{equation}
\label{eq:discrete-to-continuous-inclusion}
    \mathcal P_\varepsilon^{F_h}(S)
    \subset
    \mathcal P_{\beta_h(\varepsilon)}(S),
\end{equation}
where $\beta_h$ is defined by \eqref{eq:beta-h-definition}. In particular,
    $\mathcal P_\varepsilon^{F_h}(A)
    \subset
    \mathcal P_{\varepsilon/h+L\varepsilon}(A)$.
\end{proposition}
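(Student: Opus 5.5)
The plan is to take an arbitrary point of $\mathcal P_\varepsilon^{F_h}(S)$, realize it as the endpoint of a finite $\varepsilon$-perturbed orbit, and build a single admissible control on $[0,Nh]$ that steers the controlled system \eqref{eq:controlled-system} from the same initial point to that endpoint. Concretely, let $z\in\mathcal P_\varepsilon^{F_h}(S)$. By definition $z\in\mathcal R_\varepsilon^{F_h}(S,N)$ for some $N\geq0$, so there are $x_0\in S$ and kicks $e_0,\dots,e_{N-1}$ with $\|e_k\|<\varepsilon$ and $x_{k+1}=\phi^h(x_k)+e_k$, $x_N=z$. If $N=0$, then $z=x_0\in S=\mathcal R_r(S,0)\subset\mathcal P_r(S)$ for any $r$, using the zero control on the degenerate interval. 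This case needs no further argument.

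For $N\geq1$ I will apply Lemma~\ref{lem:discrete-step-interpolation} to each step. This gives curves $\widetilde x_k$ on $[0,h]$ with $\widetilde x_k(0)=x_k$ and $\widetilde x_k(h)=x_{k+1}$, solving $\dot{\widetilde x}_k=f(\widetilde x_k)+u_k$. Each $u_k$ is continuous in $s$, hence measurable and bounded, and satisfies $\|u_k(s)\|\leq(1/h+L)\|e_k\|$. Define $u$ on $[0,Nh]$ by $u(kh+s):=u_k(s)$ for $s\in[0,h)$, and define the concatenated curve $x(kh+s):=\widetilde x_k(s)$. The endpoint matching $\widetilde x_k(h)=x_{k+1}=\widetilde x_{k+1}(0)$ makes $x$ continuous and piecewise absolutely continuous, hence absolutely continuous on $[0,Nh]$. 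It solves \eqref{eq:controlled-system} almost everywhere with $x(0)=x_0$. By global Lipschitz continuity \ref{hyp:forward-complete}, solutions of \eqref{eq:controlled-system} with an $L^\infty$ control are unique, so $x=x_u(\cdot;x_0)$ and $x_u(Nh;x_0)=z$. Equivalently, one can use the concatenation identity of Lemma~\ref{lem:reachability-algebra}(ii) step by step.

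The only delicate point is the \emph{strict} amplitude convention in \eqref{eq:control-class}: I need $\|u\|_{\infty,[0,Nh]}<\beta_h(\varepsilon)$, not merely $\leq$. Here I will use that there are finitely many kicks. Set $\delta:=\max_{0\leq k<N}\|e_k\|$; this is a maximum of finitely many numbers each strictly less than $\varepsilon$, so $\delta<\varepsilon$. Then
\[
    \|u\|_{\infty,[0,Nh]}\leq\Bigl(\frac1h+L\Bigr)\delta<\Bigl(\frac1h+L\Bigr)\varepsilon=\beta_h(\varepsilon).
\]
Hence $u\in\mathcal U_{\beta_h(\varepsilon)}([0,Nh])$ and $z\in\mathcal R_{\beta_h(\varepsilon)}(S,Nh)\subset\mathcal P_{\beta_h(\varepsilon)}(S)$. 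Since $z$ was arbitrary, this proves \eqref{eq:discrete-to-continuous-inclusion}. The particular case $S=A$ is then just the identity $\beta_h(\varepsilon)=\varepsilon/h+L\varepsilon$.
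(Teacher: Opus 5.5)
Your proposal is correct and follows essentially the same route as the paper: interpolate each perturbed step via Lemma~\ref{lem:discrete-step-interpolation}, concatenate the curves and controls on $[0,Nh]$, and read off the endpoint. Your explicit use of $\max_k\|e_k\|<\varepsilon$ to get the strict bound $\|u\|_{\infty}<\beta_h(\varepsilon)$, together with your separate treatment of $N=0$ and the uniqueness remark, spells out steps that the paper's proof leaves implicit.
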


\begin{proof}
Let $x_0,\ldots,x_N$ be an $\varepsilon$-perturbed orbit of $F_h$. On the interval $[kh,(k+1)h]$, use the time-translated interpolation $\widetilde x(kh+s):=\widetilde x_k(s)$, $0\leq s\leq h$. The endpoint identities imply that these pieces form a continuous, absolutely continuous curve. Define the concatenated control by $u(kh+s):=u_k(s)$. Since every $\|e_k\|<\varepsilon$, Lemma~\ref{lem:discrete-step-interpolation} gives
    $\|u\|_\infty
    <
    \left(\frac1h+L\right)\varepsilon
    =
    \beta_h(\varepsilon)$.
The controlled curve starts at $x_0$ and passes through $x_k$ at time $kh$. Hence every endpoint of the perturbed orbit belongs to the right-hand side of \eqref{eq:discrete-to-continuous-inclusion}. Taking the union over all finite perturbed orbits proves the result.
\end{proof}

\begin{remark}[The interpolation constant]
\label{rem:interpolation-constant}
The $\varepsilon/h$ term is forced by generating a displacement $\varepsilon$ over time $h$; the $L\varepsilon$ term corrects for evaluating $f$ on the interpolated curve.
\end{remark}

Having established the discrete-to-continuous inclusion, we now reverse the direction. To show that continuous reachability implies discrete reachability, we sample a continuous controlled solution at grid intervals of length $h$. The resulting one-step deviation is bounded using Gr\"onwall's inequality.

\begin{lemma}[One-step control error]
\label{lem:one-step-control-error}
Let $x_u(\cdot;x)$ solve
    $\dot x_u=f(x_u)+u(t)$,
    $x_u(0;x)=x$,
on $[0,h]$, and suppose that $\|u\|_{\infty,[0,h]}<r$. Then
    $\left\|
    x_u(h;x)-\phi^h(x)
    \right\|
    <
    \alpha_h(r)$,
where $\alpha_h$ is defined by \eqref{eq:alpha-h-definition}.
\end{lemma}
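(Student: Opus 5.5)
We compare the controlled solution $y(t):=x_u(t;x)$ with the uncontrolled solution $z(t):=\phi^t(x)$ on $[0,h]$. Both start at $x$, and the difference $w(t):=y(t)-z(t)$ is absolutely continuous. It satisfies
\[
    w(t)=\int_0^t\bigl(f(y(\tau))-f(z(\tau))\bigr)\,d\tau+\int_0^t u(\tau)\,d\tau .
\]

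Set $\rho:=\|u\|_{\infty,[0,h]}$, so that $\rho<r$ by hypothesis. The global Lipschitz bound \ref{hyp:forward-complete} gives the integral inequality
\[
    \|w(t)\|\leq \rho t+L\int_0^t\|w(\tau)\|\,d\tau,\qquad 0\leq t\leq h.
\]

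The integral form of Gr\"onwall's inequality with the nondecreasing forcing term $\rho t$ then yields the following. For $L>0$,
\[
    \|w(t)\|\leq \rho\int_0^t e^{L(t-\tau)}\,d\tau=\rho\,\frac{e^{Lt}-1}{L}.
\]
A cleaner route is to compare with the scalar ODE $\dot m=Lm+\rho$, $m(0)=0$. For $L=0$ the same inequality reduces directly to $\|w(t)\|\leq\rho t$. Evaluating at $t=h$ gives $\|x_u(h;x)-\phi^h(x)\|\leq\alpha_h(\rho)$ in both cases, where $\alpha_h$ is defined by \eqref{eq:alpha-h-definition}.

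The step needing care is strictness of the final inequality. The map $r\mapsto\alpha_h(r)$ is strictly increasing and linear in $r$ with a positive coefficient ($(e^{Lh}-1)/L>0$ for $L>0$, and $h>0$ for $L=0$). Since $\rho<r$ strictly (this is where the strict-control convention \eqref{eq:control-class} is used), we get $\alpha_h(\rho)<\alpha_h(r)$.

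Forward completeness on $[0,h]$, which guarantees that $y$ exists on the whole interval, is supplied by \ref{hyp:forward-complete}. No further obstacle arises: the argument is a standard Gr\"onwall estimate, and the only subtlety is keeping the strict inequality by working with the actual sup-norm $\rho$ instead of $r$.
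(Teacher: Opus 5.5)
Your proof is correct and follows the paper's argument essentially verbatim: the integral form of the difference, the inequality $\|w(t)\|\le \rho t + L\int_0^t\|w\|$ with $\rho=\|u\|_{\infty,[0,h]}<r$, the variation-of-constants Gr\"onwall bound $\rho(e^{Lt}-1)/L$, and strictness from $\rho<r$. One wording point: the Gr\"onwall version that uses only monotonicity of the forcing term gives the weaker bound $\rho h e^{Lh}$. The bound $\alpha_h(\rho)$ itself comes from the general (variation-of-constants) form, or from your comparison with $\dot m=Lm+\rho$.
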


\begin{proof}
Set
    $z(t):=x_u(t;x)-\phi^t(x)$.
Using the integral forms of the controlled and uncontrolled equations,
\[
    z(t)
    =
    \int_0^t
    \left[
    f\bigl(x_u(s;x)\bigr)
    -
    f\bigl(\phi^s(x)\bigr)
    \right]ds
    +
    \int_0^t u(s)\,ds.
\]
Set $r':=\|u\|_{\infty,[0,h]}<r$. Then
    $\|z(t)\|
    \leq
    L\int_0^t\|z(s)\|\,ds+r't$,
and the variation-of-constants form of Gr\"onwall's inequality gives
    $\|z(t)\|
    \leq
    r'\int_0^t e^{L(t-s)}\,ds$.
At $t=h$, this becomes
\[
    \|z(h)\|
    \leq
    \begin{cases}
    \displaystyle
    r'\frac{e^{Lh}-1}{L},&L>0,\\[2ex]
    r'h,&L=0,
    \end{cases}
\]
which is strictly smaller than $\alpha_h(r)$ because $r'<r$.
\end{proof}

Sampling a controlled trajectory at times $kh$ therefore gives an $\alpha_h(r)$-perturbed orbit. A direct sampling approach bounds the error at exact grid times $kh$. However, a continuous trajectory may reach its target at an arbitrary time $T$ that is not an integer multiple of $h$. The next lemma resolves this grid-alignment problem by exploiting the exact flow-invariance of the attractor, allowing us to append a zero-cost time shift.

\begin{lemma}[Alignment with the sampling grid]
\label{lem:sampling-time-alignment}
Let $A$ satisfy $\phi^t(A)=A$ for every $t\geq0$. Given
$a\in A$, $T\geq0$, and an admissible control $u$ on $[0,T]$, there
exist an integer $N\geq0$, a point $a_-\in A$, and a control
$\widehat u$ on $[0,Nh]$ such that
\begin{equation}
\label{eq:alignment-endpoint}
    x_{\widehat u}(Nh;a_-)
    =
    x_u(T;a)
\end{equation}
and
\begin{equation}
\label{eq:alignment-control-norm}
    \|\widehat u\|_{\infty,[0,Nh]}
    =
    \|u\|_{\infty,[0,T]}.
\end{equation}
\end{lemma}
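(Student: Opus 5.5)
We prove Lemma~\ref{lem:sampling-time-alignment} by prepending a zero-control phase that fills out the gap between $T$ and the next grid point. Let $N:=\lceil T/h\rceil$ (with $N=0$ when $T=0$) and $\tau:=Nh-T\in[0,h)$. The task is to find a point of $A$ that flows into $a$ in time $\tau$, use the zero control for that time, and then run the original control $u$. Everything reduces to strong invariance $\phi^\tau(A)=A$ from \ref{hyp:compact-attractor}, i.e. surjectivity of $\phi^\tau$ onto $A$.

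\emph{Construction.} Since $a\in A=\phi^\tau(A)$, there is $a_-\in A$ with $\phi^\tau(a_-)=a$. Define $\widehat u$ on $[0,Nh]$ by
\[
\widehat u(t):=0\quad (0\le t<\tau),\qquad \widehat u(t):=u(t-\tau)\quad (\tau\le t\le Nh).
\]
This is measurable and essentially bounded. On $[0,\tau]$ the controlled trajectory from $a_-$ is the free flow, so $x_{\widehat u}(\tau;a_-)=\phi^\tau(a_-)=a$. On $[\tau,Nh]$ the system $\dot x=f(x)+\widehat u(t)$ is autonomous in $x$ with a time-shifted control, so by uniqueness of Carath\'eodory solutions (Lipschitz $f$) and time-translation invariance, $x_{\widehat u}(\tau+s;a_-)=x_u(s;a)$ for $s\in[0,T]$. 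Taking $s=T$, so that $\tau+s=Nh$, gives \eqref{eq:alignment-endpoint}. The concatenation property of Lemma~\ref{lem:reachability-algebra}(ii) is exactly this argument.

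\emph{Norm identity.} We have $\|\widehat u\|_{\infty,[0,Nh]}=\max\{0,\|u\|_{\infty,[0,T]}\}=\|u\|_{\infty,[0,T]}$, since the essential supremum over a union of intervals is the maximum of the pieces, and the shifted piece has the same essential supremum as $u$. This gives \eqref{eq:alignment-control-norm}, and in particular admissibility in $\mathcal U_r$ is preserved with the strict inequality intact.

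\emph{Main obstacle.} The only substantive point is the existence of the backward point $a_-$. For a noninvertible flow one cannot simply write $\phi^{-\tau}(a)$, so we rely on the strong invariance $\phi^\tau(A)=A$ stated in \ref{hyp:compact-attractor}, rather than mere forward invariance $\phi^\tau(A)\subset A$. It is worth noting explicitly that forward invariance alone would not suffice.

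Two edge cases need a remark. If $T=0$, then $N=0$ and $\tau=0$, and we take $a_-=a$ with the empty control. If $\tau=0$, the zero-control phase is empty. Neither case affects the norm identity.
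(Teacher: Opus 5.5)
Your proposal is correct and follows the paper's proof essentially verbatim: take $N=\lceil T/h\rceil$ and gap $\delta=Nh-T\in[0,h)$, use surjectivity of $\phi^\delta$ on $A$ to obtain $a_-$, prepend a zero-control segment of length $\delta$, and note that this does not change the essential supremum. Your remark that forward invariance $\phi^\delta(A)\subset A$ alone would not suffice is also the content of the paper's Remark~\ref{rem:no-backward-flow}.
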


\begin{proof}
Choose $N:=\left\lceil\frac{T}{h}\right\rceil$, $\delta:=Nh-T\in[0,h)$. If $\delta=0$, take $a_-=a$ and $\widehat u=u$. Suppose $\delta>0$. Since $\phi^\delta(A)=A$, there exists $a_-\in A$ such that $\phi^\delta(a_-)=a$. Define
\[
    \widehat u(t)
    :=
    \begin{cases}
    0,&0\leq t<\delta,\\
    u(t-\delta),&\delta\leq t\leq Nh.
    \end{cases}
\]
The controlled trajectory follows the invariant attractor without control from $a_-$ to $a$ during the first $\delta$ units of time and then reproduces the original controlled trajectory. This proves \eqref{eq:alignment-endpoint}. Adding the initial zero segment does not change the essential supremum norm, proving \eqref{eq:alignment-control-norm}.
\end{proof}

\begin{remark}[No backward flow is required]
\label{rem:no-backward-flow}
Lemma~\ref{lem:sampling-time-alignment} does not require the flow to be backward complete on all of $\mathbb R^d$. It only uses the surjectivity of $\phi^\delta:A\to A$, which follows from the equality $\phi^\delta(A)=A$. Positive invariance $\phi^\delta(A)\subset A$ alone would not be sufficient.
\end{remark}

\begin{proposition}[Continuous-to-discrete reachability]
\label{prop:continuous-to-discrete}
For every $r>0$ and $h>0$,
\begin{equation}
\label{eq:continuous-to-discrete-inclusion}
    \mathcal P_r(A)
    \subset
    \mathcal P_{\alpha_h(r)}^{F_h}(A).
\end{equation}
\end{proposition}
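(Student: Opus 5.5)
Take an arbitrary point $y\in\mathcal P_r(A)$ and exhibit it as the endpoint of a finite $\alpha_h(r)$-perturbed orbit of $F_h$ that starts in $A$. By definition \eqref{eq:infinite-reachable-flow}, $y=x_u(T;a)$ for some $a\in A$, some $T\geq0$, and some control $u$ with $\|u\|_{\infty,[0,T]}<r$.

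The first step is to align $T$ with the sampling grid using Lemma~\ref{lem:sampling-time-alignment}. This gives an integer $N=\lceil T/h\rceil$, a point $a_-\in A$, and a control $\widehat u$ on $[0,Nh]$ with $x_{\widehat u}(Nh;a_-)=y$ and $\|\widehat u\|_{\infty,[0,Nh]}=\|u\|_{\infty,[0,T]}<r$. If $N=0$, then $T=0$ and $y=a\in A\subset\mathcal P^{F_h}_{\alpha_h(r)}(A)$, because $A=\mathcal R^{F_h}_{\alpha_h(r)}(A,0)$. So we may assume $N\geq1$.

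Next, sample the aligned trajectory. Set $x_k:=x_{\widehat u}(kh;a_-)$ for $k=0,\dots,N$. By the semigroup property of solutions of \eqref{eq:controlled-system}, $x_{k+1}$ is the value at time $h$ of the controlled solution that starts at $x_k$ and is driven by the shifted control $\widehat u(kh+\cdot)$ restricted to $[0,h]$. This shifted control has essential supremum at most $\|\widehat u\|_{\infty,[0,Nh]}<r$. Lemma~\ref{lem:one-step-control-error} then gives $e_k:=x_{k+1}-\phi^h(x_k)$ with $\|e_k\|<\alpha_h(r)$. Hence $x_0=a_-\in A,x_1,\dots,x_N=y$ is an $\alpha_h(r)$-perturbed orbit in the sense of \eqref{eq:perturbed-map}, so $y\in\mathcal R^{F_h}_{\alpha_h(r)}(A,N)\subset\mathcal P^{F_h}_{\alpha_h(r)}(A)$. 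Since $y$ was arbitrary, the inclusion \eqref{eq:continuous-to-discrete-inclusion} follows.

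The main subtlety is strictness. The discrete kicks must satisfy the strict bound $\|e_k\|<\alpha_h(r)$, and this needs a bound on the restricted control that is strictly below $r$ uniformly in $k$. That holds because all the restrictions inherit the single number $r'=\|\widehat u\|_{\infty}<r$, and Lemma~\ref{lem:one-step-control-error} already converts $r'<r$ into a strict inequality. The grid alignment is the other delicate point. It is handled entirely by Lemma~\ref{lem:sampling-time-alignment}, which relies on the surjectivity $\phi^\delta(A)=A$ from \ref{hyp:compact-attractor}. Forward completeness under bounded controls, from \ref{hyp:forward-complete}, guarantees that all the trajectories involved exist.
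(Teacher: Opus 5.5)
Your proof is correct and takes the same approach as the paper: you align the endpoint with the sampling grid via Lemma~\ref{lem:sampling-time-alignment}, sample the aligned controlled trajectory at the times $kh$, and apply Lemma~\ref{lem:one-step-control-error} on each step to obtain an $\alpha_h(r)$-perturbed orbit from $A$ ending at $y$. Your separate treatment of $N=0$ and your remark on strictness are harmless additions; uniformity in $k$ is not actually needed, since the orbit is finite and each kick only has to satisfy $\|e_k\|<\alpha_h(r)$ individually.
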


\begin{proof}
Take any $y\in\mathcal P_r(A)$. Then $y=x_u(T;a)$ for some $a\in A$, $T\geq0$, and control satisfying
$\|u\|_\infty<r$. By Lemma~\ref{lem:sampling-time-alignment}, the same endpoint can be written as $y=x_{\widehat u}(Nh;a_-)$ for an $a_-\in A$ and a control $\widehat u$ with $\|\widehat u\|_\infty<r$.

Set $y_k:=x_{\widehat u}(kh;a_-)$, $k=0,\ldots,N$. On each interval $[kh,(k+1)h]$, apply Lemma~\ref{lem:one-step-control-error} to the time-translated control. Then $\|y_{k+1}-\phi^h(y_k)\| < \alpha_h(r)$. Thus $y_0,\ldots,y_N$ is an $\alpha_h(r)$-perturbed orbit of $F_h$ starting in $A$, and $y=y_N$ belongs to the right-hand side of \eqref{eq:continuous-to-discrete-inclusion}.
\end{proof}

We can now prove the main theorem.

\begin{proof}[Proof of Theorem~\ref{thm:sampling}]
Let
    $0<\varepsilon<
    \frac{h\mu(A)}{1+Lh}$.
Then
    $\beta_h(\varepsilon)
    =
    \left(\frac1h+L\right)\varepsilon
    <
    \mu(A)$.
By Definition~\ref{def:continuous-intensity} and monotonicity of reachable sets, $\overline{\mathcal P_{\beta_h(\varepsilon)}(A)} \subset K\Subset\mathcal D(A)$ for some compact $K$. Proposition~\ref{prop:discrete-to-continuous} therefore gives $\overline{\mathcal P_\varepsilon^{F_h}(A)}
    \subset
    \overline{\mathcal P_{\beta_h(\varepsilon)}(A)}
    \subset K\Subset\mathcal D(A)$.
Hence $\varepsilon$ is admissible in the definition of $\mu_h(A)$. Taking the supremum over such $\varepsilon$ yields $\mu_h(A)
    \geq
    \frac{h\mu(A)}{1+Lh}$.

For the upper bound, take any $0<\varepsilon<\mu_h(A)$ and set
\[
    r:=\alpha_h^{-1}(\varepsilon)
    =
    \begin{cases}
    \displaystyle
    \frac{L\varepsilon}{e^{Lh}-1},&L>0,\\[2ex]
    \displaystyle
    \frac{\varepsilon}{h},&L=0.
    \end{cases}
\]
Then $\alpha_h(r)=\varepsilon$. Since $\varepsilon<\mu_h(A)$,
    $\overline{\mathcal P_\varepsilon^{F_h}(A)}
    \subset K\Subset\mathcal D(A)$
for some compact $K$. Proposition~\ref{prop:continuous-to-discrete} implies
    $\overline{\mathcal P_r(A)}
    \subset
    \overline{\mathcal P_\varepsilon^{F_h}(A)}
    \subset K\Subset\mathcal D(A)$.
Therefore $r\leq\mu(A)$, and consequently
    $\varepsilon=\alpha_h(r)
    \leq
    \alpha_h\bigl(\mu(A)\bigr)$.
Letting $\varepsilon\uparrow\mu_h(A)$ proves the upper bound in \eqref{eq:sampling-main-bounds}. Dividing by $h$ and taking $h\downarrow0$ proves \eqref{eq:sampling-main-limit}.
\end{proof}

\subsection{Rates, scalar benchmarks and one-step numerical methods}
\label{subsec:sampling-consequences}

This part quantifies the convergence rates implied by the previous bounding bounds. We first establish an exact $O(h)$ convergence rate for exact time-$h$ sampling, and then extend this framework to practical one-step numerical methods by bounding the dual effects of local truncation and numerical attractor displacement. 
For $L>0$, the elementary expansions
\begin{align}
\label{eq:lower-factor-expansion}
    \frac1{1+Lh}
    &=
    1-Lh+L^2h^2+O(h^3),\\
\label{eq:upper-factor-expansion}
    \frac{e^{Lh}-1}{Lh}
    &=
    1+\frac{Lh}{2}+\frac{L^2h^2}{6}+O(h^3)
\end{align}
give the following consequence.

\begin{corollary}[First-order convergence]
\label{cor:sampling-first-order}
Under the hypotheses of Theorem~\ref{thm:sampling},
\begin{equation}
\label{eq:sampling-first-order}
    \frac{\mu_h(A)}{h}
    =
    \mu(A)+O(h).
\end{equation}
More precisely,
    $-\mu(A)Lh+O(h^2)
    \leq
    \frac{\mu_h(A)}{h}-\mu(A)
    \leq
    \frac{\mu(A)L}{2}h+O(h^2)$.
\end{corollary}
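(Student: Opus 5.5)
The plan is to read the corollary off directly from Theorem~\ref{thm:sampling}. For $L=0$ the theorem already gives $\mu_h(A)=h\mu(A)$, so both the $O(h)$ claim and the refined two-sided estimate hold trivially, with zero error.

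For $L>0$, divide \eqref{eq:sampling-main-bounds} by $h>0$ and subtract $\mu(A)$. Since $\mu(A)$ is finite by \ref{hyp:positive-finite-intensity}, this gives
\[
    \mu(A)\Bigl(\frac{1}{1+Lh}-1\Bigr)
    \le
    \frac{\mu_h(A)}{h}-\mu(A)
    \le
    \mu(A)\Bigl(\frac{e^{Lh}-1}{Lh}-1\Bigr).
\]

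The next step is to insert the expansions \eqref{eq:lower-factor-expansion} and \eqref{eq:upper-factor-expansion}. The left-hand side becomes $-\mu(A)Lh+\mu(A)L^2h^2+O(h^3)$, which is $-\mu(A)Lh+O(h^2)$. The right-hand side becomes $\mu(A)\bigl(\tfrac{Lh}{2}+\tfrac{L^2h^2}{6}+O(h^3)\bigr)$, which is $\tfrac{\mu(A)L}{2}h+O(h^2)$. Because $\mu(A)$ is a fixed finite constant, multiplying by it does not affect the $O(\cdot)$ orders. This is exactly the refined statement.

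Taking absolute values then gives $\bigl|\mu_h(A)/h-\mu(A)\bigr|\le \mu(A)Lh+O(h^2)$, which is \eqref{eq:sampling-first-order}.

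To make the big-$O$ terms explicit and valid for all $h$ rather than only asymptotically, I would use two elementary bounds. On the lower side, $\frac{1}{1+Lh}-1=-\frac{Lh}{1+Lh}\ge -Lh$. On the upper side, $\frac{e^{Lh}-1}{Lh}-1\le \frac{Lh}{2}e^{Lh}$, which follows from the series comparison $\sum_{k\ge 1}(Lh)^k/(k+1)!\le \tfrac{Lh}{2}e^{Lh}$. With these, the $O(h)$ bound holds uniformly on any bounded range of $h$, for example $0<h\le 1$.

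There is no real obstacle here. The only care needed is to confirm that the implied constants depend only on $L$ and $\mu(A)$, and the finiteness hypothesis \ref{hyp:positive-finite-intensity} guarantees that.
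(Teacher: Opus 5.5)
Your proposal is correct and matches the paper's argument: the paper also obtains the corollary by dividing the bounds of Theorem~\ref{thm:sampling} by $h$ and inserting the expansions \eqref{eq:lower-factor-expansion}--\eqref{eq:upper-factor-expansion}, and the case $L=0$ is trivial there too. Your explicit inequalities $-\frac{Lh}{1+Lh}\ge -Lh$ and $\frac{e^{Lh}-1}{Lh}-1\le \frac{Lh}{2}e^{Lh}$ (valid termwise since $2(k-1)!\le (k+1)!$) are a correct non-asymptotic sharpening the paper does not state, and they give $\bigl|\mu_h(A)/h-\mu(A)\bigr|\le \mu(A)Lh\max\{1,e^{L}/2\}$ for $0<h\le 1$.
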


The two sides of \eqref{eq:sampling-main-bounds} arise from different
approximations and need not be simultaneously attained. The lower
bound reflects the cost of interpolating a discrete endpoint error,
while the upper bound is governed by Gr\"onwall amplification of a
continuous control. Additional smoothness or special geometry can
lead to higher-order convergence in individual systems.
Example~\ref{ex:sharp-scalar} below shows, however, that the general
$O(h)$ rate in \eqref{eq:sampling-first-order} cannot be replaced by
$o(h)$ without additional hypotheses.

Both that example and the scalar computations of
Section~\ref{sec:examples} concern one-dimensional systems whose basin
is bounded on the escape side only. For this class we first record a
one-sided refinement of Theorem~\ref{thm:sampling}. Its point is that
the constants involve only the restriction of $f$ to the escape
interval between the basin boundary and the attractor, even though the
perturbed reachable sets necessarily leave that interval on the
non-escape side. Theorem~\ref{thm:sampling} itself, including its
localized form in Remark~\ref{rem:localized-sampling}, would instead
require a Lipschitz constant valid on a region containing all
subcritical perturbed reachable sets and interpolation segments.

\begin{proposition}[One-sided sampling bounds for scalar systems]
\label{prop:scalar-one-sided}
Let $f:\mathbb R\to\mathbb R$ be locally Lipschitz with forward-complete
flow, let $b<a$, and assume:
\begin{enumerate}
\item $f(b)=f(a)=0$, $f>0$ on $(b,a)$, and $f\leq0$ on $[a,\infty)$;
\item $A=\{a\}$ is an attractor with basin
$\mathcal D(A)=(b,\infty)$;
\item for every $r\geq0$ there exists $x^+(r)>a$ such that
$f(x)\leq-r$ for all $x\geq x^+(r)$.
\end{enumerate}
Set $I:=[b,a]$, let $L\geq0$ be a Lipschitz constant for $f$ on $I$,
and write
    $\mu:=\max_{x\in I}f(x)$.
Then, for every $h>0$,
\begin{align}
\label{eq:scalar-one-sided-continuous}
    \mu(A)&=\mu,\\
\label{eq:scalar-one-sided-discrete}
    \mu_h(A)&=\max_{x\in I}\bigl(F_h(x)-x\bigr),\\
\label{eq:scalar-one-sided-bounds}
    \mu\,\frac{1-e^{-Lh}}{Lh}
    \;\leq\;
    \frac{\mu_h(A)}{h}
    \;&\leq\;
    \mu,
\end{align}
where the prefactor in \eqref{eq:scalar-one-sided-bounds} is
interpreted as $1$ when $L=0$.
\end{proposition}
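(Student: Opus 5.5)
The plan is to treat the continuous identity \eqref{eq:scalar-one-sided-continuous}, the discrete identity \eqref{eq:scalar-one-sided-discrete}, and the two-sided estimate \eqref{eq:scalar-one-sided-bounds} separately. Each will be proved directly from the order structure of scalar dynamics, not via Theorem~\ref{thm:sampling}. Throughout, fix a maximizer $x^*\in(b,a)$ of $f$ on $I$; it is interior because $f(b)=f(a)=0<\mu$. Two facts will be used repeatedly. First, scalar flows are order preserving, so $F_h$ is increasing. Second, since $f>0$ on $(b,a)$ and $f\le0$ on $[a,\infty)$, uncontrolled orbits starting in $I$ stay in $I$ and move toward $a$, and $\phi^t$ does not increase points of $[a,\infty)$.

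For \eqref{eq:scalar-one-sided-continuous}, take $r<\mu$ and a control with $\|u\|_\infty=r'<r$. At $x^*$ the velocity satisfies $f(x^*)+u\ge\mu-r'>0$, so by a first-crossing argument no trajectory started from $a>x^*$ ever goes below $x^*$. On the other side, for $x\ge x^+(r)$ we have $f+u\le-r+r'<0$, so trajectories cannot go above $\max(a,x^+(r))$. Hence $\overline{\mathcal P_r(A)}\subset[x^*,\max(a,x^+(r))]\Subset(b,\infty)$, which gives $\mu(A)\ge\mu$. For $r>\mu$, take the constant control $u\equiv-r'$ with $\mu<r'<r$. Then $\dot x\le\mu-r'<0$ on $I$, so the trajectory from $a$ reaches $b\notin\mathcal D(A)$ in finite time, and $\mu(A)\le\mu$.

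For \eqref{eq:scalar-one-sided-discrete}, let $g(x):=F_h(x)-x$ and $m:=\max_I g$. We have $g>0$ on $(b,a)$ and $g(b)=g(a)=0$; fix a maximizer $\hat x$ of $g$. If $\varepsilon<m$ and $x_k\ge\hat x$, then monotonicity of $F_h$ gives $x_{k+1}>F_h(\hat x)-\varepsilon=\hat x+m-\varepsilon>\hat x$, so perturbed orbits from $a$ stay above $\hat x$. For the upper side, set $\rho:=\varepsilon/h+1$ and $X:=\max(a,x^+(\rho))$. From $f\le-\rho$ above $x^+(\rho)$ and $f\le0$ on $[a,\infty)$ I expect to derive $F_h(x)\le\max(x-\rho h,X)$ for $x\ge a$. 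It then follows that $[\hat x,X+\varepsilon]$ is forward invariant under $\varepsilon$-perturbed steps, which gives compact containment. If instead $\varepsilon>m$, take kicks $e_k=-\varepsilon'$ with $m<\varepsilon'<\varepsilon$. Each step in $I$ then lowers the state by at least $\varepsilon'-m>0$, so the orbit leaves $(b,\infty)$ after finitely many steps. This gives $\mu_h(A)=m$, using the basin identity of Lemma~\ref{lem:basin-identity}. The main obstacle is this bookkeeping on the non-escape side, namely checking the bound $F_h(x)\le\max(x-\rho h,X)$ and the resulting invariant interval carefully. This is exactly where hypothesis~(iii) replaces a global Lipschitz bound.

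For \eqref{eq:scalar-one-sided-bounds}, write $g(x)=\int_0^h f(\phi^s x)\,ds$. Since $\phi^s x\in I$ for $x\in I$, each integrand is at most $\mu$, so $m\le h\mu$. For the lower bound, evaluate $g$ at $x^*$ and set $\psi(s):=f(\phi^s x^*)$. The Lipschitz bound on $I$ gives $\psi(s)\ge\mu-L\int_0^s\psi(\tau)\,d\tau$, because $|\phi^s x^*-x^*|=\int_0^s\psi$. A Grönwall comparison for this linear integral inequality then yields $\int_0^s\psi\ge\mu(1-e^{-Ls})/L$. At $s=h$ this gives $m\ge g(x^*)\ge\mu(1-e^{-Lh})/L$, and the case $L=0$ follows by continuity.
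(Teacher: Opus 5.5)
Your proposal is correct and follows the paper's proof essentially step for step: first-crossing barriers at the maximizer of $f$ and at $x^+(r)$ give $\mu(A)=\mu$; monotonicity of $F_h$, hypothesis (iii) for confinement on the right, and constant negative kicks for the converse give the discrete identity; and the integral representation $F_h(x)-x=\int_0^h f(\phi^s x)\,ds$ together with a Gr\"onwall-type comparison at the maximizer of $f$ gives the bounds. The differences are cosmetic: you take $\rho=\varepsilon/h+1$ and the invariant interval $[\hat x,X+\varepsilon]$ where the paper takes $rh\ge\varepsilon_h$ and $(-\infty,M]$, and you prove admissibility for each $\varepsilon<m$ rather than at $\varepsilon=m$.
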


\begin{proof}
Throughout, we use that controlled solutions are continuous in time and
that $F_h$ is nondecreasing, being the time-$h$ map of a scalar flow.

\emph{Step 1: barriers.}
Let $r\geq0$ and let $c\in(b,a)$ satisfy $f(c)>r$. No solution of
$\dot x=f(x)+u(t)$ with $\|u\|_\infty<r$ crosses $c$ from right to
left. Indeed, suppose $x(t_0)\geq c$ and $x(t_1)<c$ for some
$t_1>t_0$, and let
$\tau:=\sup\{t\in[t_0,t_1]:x(t)\geq c\}$, so that $x(\tau)=c$ and
$x<c$ on $(\tau,t_1]$. By continuity there is $\sigma\in(\tau,t_1]$
with $f(x(t))\geq\tfrac12(f(c)+r)$ for $t\in[\tau,\sigma]$, whence
\[
    x(t)-c
    =
    \int_\tau^t\bigl(f(x(s))+u(s)\bigr)\,ds
    \geq
    (t-\tau)\,\tfrac12\bigl(f(c)-r\bigr)
    >0
    \qquad\text{on }(\tau,\sigma],
\]
contradicting the definition of $\tau$. The mirrored argument, using
hypothesis (iii), shows that no such solution crosses $x^+(r)$ from
left to right.

\emph{Step 2: continuous intensity.}
Let $r<\mu$ and let $x_*\in(b,a)$ maximize $f$ on $I$, so
$f(x_*)=\mu>r$. By Step 1, every admissible controlled trajectory
starting at $a$ remains in $[x_*,x^+(r)]$, a compact subset of
$(b,\infty)=\mathcal D(A)$. Hence every $r<\mu$ is admissible in
Definition~\ref{def:continuous-intensity}, and $\mu(A)\geq\mu$.
Conversely, let $r>\mu$ and choose $\mu<r'<r$. Under the constant
control $u\equiv-r'$, one has $\dot x=f(x)-r'\leq\mu-r'<0$ on $I$, so
the solution starting at $a$ reaches $b\notin\mathcal D(A)$ in finite
time. Thus $r$ is not admissible, and $\mu(A)\leq\mu$.

\emph{Step 3: discrete intensity.}
Set $\varepsilon_h:=\max_{x\in I}(F_h(x)-x)$. Since $F_h(x)>x$ on
$(b,a)$ and $F_h-\operatorname{id}$ vanishes at $b$ and $a$, the
maximum is positive and is attained at some $x_h\in(b,a)$.

First, $\varepsilon_h$ is admissible. Consider a perturbed orbit from
$a$ with kicks $\|e_k\|<\varepsilon_h$. If $x\geq x_h$, monotonicity
gives
    $F_h(x)+e
    \geq
    F_h(x_h)-\|e\|
    >
    x_h+\varepsilon_h-\varepsilon_h
    =
    x_h$,
so the orbit remains to the right of $x_h>b$. For confinement on the
right, pick $r$ with $rh\geq\varepsilon_h$ and set
$M:=x^+(r)+\varepsilon_h$. For $x\geq x^+(r)$, the unperturbed
trajectory decreases at rate at least $r$ while it remains in
$[x^+(r),\infty)$, and it cannot re-enter that set once it leaves it
(Step 1 with $u\equiv0$); hence
$F_h(x)\leq\max\bigl(x-rh,\;x^+(r)\bigr)\leq\max(x-\varepsilon_h,\,
M-\varepsilon_h)$. By monotonicity, for every $x\leq M$,
    $F_h(x)+e
    \leq
    F_h(M)+\varepsilon_h
    \leq
    M$,
so $(-\infty,M]$ is forward invariant for the perturbed dynamics.
Every perturbed orbit from $a$ therefore remains in the compact set
$[x_h,M]\subset(b,\infty)$, and $\mu_h(A)\geq\varepsilon_h$.

Conversely, let $\varepsilon>\varepsilon_h$ and choose
$\varepsilon_h<\varepsilon'<\varepsilon$. Apply the constant kicks
$e_k\equiv-\varepsilon'$. For $x\in(b,a]$,
\[
    F_h(x)-\varepsilon'
    \leq
    x+\varepsilon_h-\varepsilon'
    =
    x-(\varepsilon'-\varepsilon_h),
\]
while for $x\geq a$ one has $F_h(x)\leq x$, because the flow is
nonincreasing on $[a,\infty)$ and cannot cross the equilibrium $a$.
Starting at $a$, the perturbed orbit therefore decreases by at least
$\varepsilon'-\varepsilon_h$ per step while it remains in $(b,a]$, and
after finitely many steps it produces a point $\leq b$, which lies
outside $\mathcal D(A)$. Hence $\varepsilon$ is not admissible, and
$\mu_h(A)\leq\varepsilon_h$. This proves
\eqref{eq:scalar-one-sided-discrete}.

\emph{Step 4: the bounds.}
For $x\in(b,a)$, the unperturbed trajectory $\phi^s(x)$ increases
toward $a$ and hence remains in $I$. Therefore
    $F_h(x)-x
    =
    \int_0^h f\bigl(\phi^s(x)\bigr)\,ds
    \leq
    h\mu$,
and taking the maximum over $I$ gives $\mu_h(A)\leq h\mu$, the upper
bound in \eqref{eq:scalar-one-sided-bounds}.

For the lower bound, set
\[
    g(s):=f\bigl(\phi^s(x_*)\bigr)\geq0,
    \qquad
    G(s):=\int_0^s g(\sigma)\,d\sigma
    =
    \phi^s(x_*)-x_*.
\]
Since $\phi^s(x_*)\in I$, the Lipschitz bound on $I$ gives
    $g(s)
    \geq
    f(x_*)-L\bigl(\phi^s(x_*)-x_*\bigr)
    =
    \mu-LG(s)$,
so $(e^{Ls}G)'=e^{Ls}(G'+LG)\geq\mu e^{Ls}$, and integration yields
    $G(h)\geq\mu\,\frac{1-e^{-Lh}}{L}$.
Since $\mu_h(A)\geq F_h(x_*)-x_*=G(h)$ by
\eqref{eq:scalar-one-sided-discrete}, the lower bound follows.
\end{proof}

\begin{remark}[Scope of the one-sided bounds]
\label{rem:one-sided-scope}
The constants use only $f|_I$ on the escape interval, and the upper bound has no Gr\"onwall amplification, so $\mu_h(A)/h\uparrow\mu(A)$. The mirrored and two-sided-basin versions hold by symmetry.
\end{remark}

When the vector field is twice differentiable on the escape interval,
the one-sided structure yields second-order convergence, explaining
the rates observed in Section~\ref{sec:examples}.

\begin{corollary}[Second-order convergence for smooth scalar folds]
\label{cor:scalar-second-order}
In the setting of Proposition~\ref{prop:scalar-one-sided}, suppose in
addition that $f\in C^2$ on $I$ and set $L_2:=\sup_I|f''|$. Then, for
every $h>0$,
    $0
    \;\leq\;
    \mu(A)-\frac{\mu_h(A)}{h}
    \;\leq\;
    \frac{L_2\,\mu(A)^2}{6}\,h^2$.
\end{corollary}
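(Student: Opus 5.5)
The plan is to use the exact formula \eqref{eq:scalar-one-sided-discrete} from Proposition~\ref{prop:scalar-one-sided}. The lower inequality $0\le\mu(A)-\mu_h(A)/h$ is already the upper bound in \eqref{eq:scalar-one-sided-bounds}, so only the quadratic bound remains. Since $\mu_h(A)=\max_{x\in I}(F_h(x)-x)\ge F_h(x)-x$ for every $x\in I$, it suffices to exhibit one starting point whose one-step displacement is at least $h\mu-\tfrac{L_2\mu^2}{6}h^3$.

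The choice I would try is not the maximizer $x_*$ itself. Instead take a point $x_0\le x_*$ placed so that the trajectory $\phi^s(x_0)$ straddles $x_*$ symmetrically in time over $[0,h]$. Then the first-order loss $f'(x_*)(\phi^s(x_0)-x_*)$ integrates to zero, and only the second-order term survives. Concretely, $x_*$ is interior to $(b,a)$ and $f\in C^2$, so $f'(x_*)=0$ and
\[
f(y)\ge\mu-\tfrac{L_2}{2}(y-x_*)^2
\]
on $I$. Set $x_0:=\phi^{-h/2}(x_*)$, which exists in $(b,x_*]$ because the backward flow stays in $(b,a)$, where $f>0$ and $b$ is an equilibrium. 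Then $y(s):=\phi^s(x_0)\in I$ for $s\in[0,h]$, and $y(h/2)=x_*$. Since $0\le\dot y\le\mu$, we get $|y(s)-x_*|\le\mu|s-h/2|$. Hence
\[
F_h(x_0)-x_0=\int_0^h f(y(s))\,ds\ge h\mu-\tfrac{L_2\mu^2}{2}\int_0^h(s-h/2)^2\,ds=h\mu-\tfrac{L_2\mu^2}{24}h^3 .
\]
This is even better than the claimed $\tfrac{L_2\mu^2}{6}h^3$. Dividing by $h$ and using \eqref{eq:scalar-one-sided-continuous}, i.e.\ $\mu=\mu(A)$, finishes the argument.

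The one delicate point is the maximizer sitting at an endpoint of $I$, which would break the claim $f'(x_*)=0$. This cannot happen: $f(b)=f(a)=0$ while $\max f>0$. It can also happen that $L_2=\infty$ when $f''$ is only defined on $I$; in that case the bound is trivially true. A smaller check is existence of the backward half-step $\phi^{-h/2}(x_*)$. If one prefers to avoid backward flow, take $x_0=x_*$ instead. This gives only the weaker estimate $|y(s)-x_*|\le\mu s$ and $\int_0^h s^2\,ds=h^3/3$, i.e.\ loss $\tfrac{L_2\mu^2}{6}h^3$, which is exactly the stated constant. Either route works, and I would present the forward one, $x_0=x_*$, since it matches the statement and uses only the Taylor bound with $f'(x_*)=0$.
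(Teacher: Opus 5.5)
Your presented route, which starts at $x_0=x_*$, uses $f'(x_*)=0$ at the interior maximizer, and integrates the Taylor bound $f(y)\ge\mu-\tfrac{L_2}{2}(y-x_*)^2$ along $0\le\phi^s(x_*)-x_*\le s\mu$, is exactly the paper's proof. Your symmetric variant $x_0=\phi^{-h/2}(x_*)$ is also correct (the backward orbit stays in $(b,x_*]$ because $b$ is an equilibrium) and gives the bound $L_2\mu^2h^2/24$, a fourfold sharpening whose constant is the true leading-order one: for $\dot x=x-x^3$ it reproduces the exact error $\mu h^2/18$ once $L_2$ is replaced by $|f''(x_*)|$. The gain comes from halving the distance to $x_*$ ($\int_0^h(s-h/2)^2\,ds=h^3/12$ versus $h^3/3$), not from a first-order cancellation, since $f'(x_*)=0$ already removes that term.
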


\begin{proof}
The first inequality is the upper bound in
\eqref{eq:scalar-one-sided-bounds}. For the second, let
$x_*\in(b,a)$ maximize $f$ on $I$. Since the maximum is interior,
$f'(x_*)=0$, and Taylor's theorem gives
    $f(y)\geq\mu-\frac{L_2}{2}(y-x_*)^2$,
     $y\in I$.
The trajectory $\phi^s(x_*)$ remains in $I$ and satisfies
    $0
    \leq
    \phi^s(x_*)-x_*
    =
    \int_0^s f\bigl(\phi^\sigma(x_*)\bigr)\,d\sigma
    \leq
    s\mu$.
Hence, by \eqref{eq:scalar-one-sided-discrete},
\[
    \mu_h(A)
    \geq
    F_h(x_*)-x_*
    =
    \int_0^h f\bigl(\phi^s(x_*)\bigr)\,ds
    \geq
    \int_0^h
    \left(
        \mu-\frac{L_2}{2}\mu^2s^2
    \right)ds
    =
    h\mu-\frac{L_2\mu^2}{6}\,h^3.
    \qedhere
\]
\end{proof}

\begin{example}[A sharp first-order scalar example]
\label{ex:sharp-scalar}
Let $f(x)=x$ for $x\le\tfrac12$ and $1-x$ for $x\ge\tfrac12$ (globally
Lipschitz, $L=1$), with attractor $A=\{1\}$, basin $(0,\infty)$. Escape
leftward must overcome the maximal drift on $[0,1]$, so $\mu(A)=\max_{[0,1]}
f=\tfrac12$. Proposition~\ref{prop:scalar-one-sided} applies ($b=0$, $a=1$,
$f\to-\infty$), and computing the time-$h$ map explicitly gives the exact
$\mu_h(A)=\max_{[0,1]}(F_h-\mathrm{id})=1-e^{-h/2}$ (maximizer
$x_h^\ast=\tfrac12 e^{-h/2}$), hence
    $\frac{\mu_h(A)}{h}=\tfrac12-\tfrac18 h+O(h^2)=\mu(A)-\tfrac18 h+O(h^2)$.
The nonzero first-order coefficient shows the $O(h)$ rate is sharp even in
dimension one; note $-\tfrac18\in[-\tfrac14,0]$, the interval predicted by
\eqref{eq:scalar-one-sided-bounds} with $L=1$.
\end{example}

\begin{remark}[Effect of smoothness]
\label{rem:smooth-superconvergence}
The kink in Example~\ref{ex:sharp-scalar} defeats Corollary~\ref{cor:scalar-second-order} (its maximizer is the nonsmooth point $x=\tfrac12$), forcing the first-order rate; a $C^2$ interior maximizer instead gives second order, with $L_2$ possibly large.
\end{remark}

While the preceding results assume the exact time-$h$ map $F_h=\phi^h$ is available, practical computations rely on one-step numerical approximations $\Psi_h:\mathbb R^d\to\mathbb R^d$. Classical results establish persistence and convergence of numerical attractors only under stability, attraction-rate, and robustness
hypotheses beyond local consistency; see \citep{kloeden1986stable,grune2001asymptotic,garay2005attractors,stuart1998dynamical,beyn1987numerical}. General background on one-step methods and their long-time behavior is in \citep{hairer1993solving,hairer2006geometric}. We therefore separate two logically distinct issues:
\begin{enumerate}
\item stability of block intensity under perturbation of a one-step map;
\item approximation or continuation of the attractor used as the seed of the reachable set.
\end{enumerate}
This distinction prevents a local truncation estimate from being mistaken for a global basin-convergence theorem.

We first discuss map stability. To isolate the numerical truncation error from the global escape dynamics, we localize the definition of intensity to a fixed trapping block $B$. 

For a compact set $B$ with nonempty interior and a compact seed set $S \subset \operatorname{int} B$, the $B$-block intensity of a continuous map $G$ is defined as
\[
    \mu_B(G;S) := \sup\left\{\varepsilon \ge 0 : \overline{\mathcal P^G_\varepsilon(S)} \subset \operatorname{int} B\right\}.
\]
Its continuous-time analogue is denoted by
\[
    \mu_B(A) := \sup\left\{r \ge 0 : \overline{\mathcal P_r(A)} \subset \operatorname{int} B\right\}.
\]
For the exact time-$h$ map $F_h = \phi^h$ and a numerical one-step map $\Psi_h$, we adopt the notations
\[
    \mu^\phi_{h,B}(A) := \mu_B(F_h;A) \quad \text{and} \quad \mu^\Psi_{h,B}(S) := \mu_B(\Psi_h;S).
\]

Clearly, $\mu_B(A) \le \mu(A)$, with equality holding whenever $B \Subset \mathcal D(A)$ contains every subcritical reachable set. Propositions~\ref{prop:discrete-to-continuous}--\ref{prop:continuous-to-discrete} apply verbatim on $B$, yielding
\begin{equation}
\label{eq:block-sampling-bounds}
    \frac{h\mu_B(A)}{1+Lh} \le \mu^\phi_{h,B}(A) \le \alpha_h\bigl(\mu_B(A)\bigr).
\end{equation}

The following elementary stability theorem is the main tool for numerical schemes.

\begin{theorem}[Lipschitz stability of block intensity]
\label{thm:block-intensity-map-stability}
Let $F,G:B\to\mathbb R^d$ be continuous, let $S\subset\operatorname{int}B$ be compact, and suppose $\sup_{x\in B}\|F(x)-G(x)\| \leq\delta$. Then
    $\left|
    \mu_B(F;S)-\mu_B(G;S)
    \right|
    \leq\delta$.
\end{theorem}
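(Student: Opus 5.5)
The plan is a symmetric one-sided comparison of reachable sets. I will prove
\[
    \mu_B(G;S)\ \ge\ \mu_B(F;S)-\delta ,
\]
and then obtain the reverse inequality by exchanging the roles of $F$ and $G$. If $\mu_B(F;S)\le\delta$ there is nothing to prove, because block intensities are nonnegative. So fix $\varepsilon\ge0$ with $\varepsilon+\delta<\mu_B(F;S)$, and choose $\varepsilon'$ with $\varepsilon+\delta<\varepsilon'<\mu_B(F;S)$. The slack $\varepsilon'>\varepsilon+\delta$ is chosen so that the strict kick inequality $\|e_k\|<\varepsilon'$ survives even when $\varepsilon=0$. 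That case uses the convention $\mathcal R^G_0(S,k+1)=G(\mathcal R^G_0(S,k))$, with zero kicks, together with the non-strict bound $\|F-G\|\le\delta$. By the definition of $\mu_B(F;S)$ and monotonicity in the amplitude (Lemma~\ref{lem:reachability-algebra}(v)), we have $\overline{\mathcal P^F_{\varepsilon'}(S)}\subset\operatorname{int}B$.

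\textbf{Key step: transferring orbits.} I will show $\mathcal P^G_\varepsilon(S)\subset\mathcal P^F_{\varepsilon'}(S)$ by induction on the step number $k$. The inductive claim is that every point $x_k$ of an $\varepsilon$-perturbed $G$-orbit from $S$ lies in $\mathcal P^F_{\varepsilon'}(S)\subset\operatorname{int}B$. For $k=0$ this holds because $x_0\in S$. Now suppose $x_k\in\mathcal P^F_{\varepsilon'}(S)$. Then $x_k\in B$, so $G(x_k)$ is defined and $\|F(x_k)-G(x_k)\|\le\delta$. Write
\[
    x_{k+1}=G(x_k)+e_k=F(x_k)+\bigl(G(x_k)-F(x_k)+e_k\bigr),
\]
where the new kick has norm at most $\delta+\|e_k\|<\varepsilon'$. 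Hence $x_{k+1}\in B_{\varepsilon'}(F(x_k))$. Concatenation then gives $x_{k+1}\in\mathcal P^F_{\varepsilon'}(S)$.

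\textbf{Main obstacle: the domain restriction.} This is the only real subtlety, since $F$ and $G$ are defined only on $B$. The induction must carry the containment in $B$ along with it, so that each application of $G$ and of the $\delta$-estimate is legitimate. That is exactly why the induction hypothesis is phrased as membership in $\mathcal P^F_{\varepsilon'}(S)$, which is already known to lie in $\operatorname{int}B$, rather than bounding $G$-orbits directly.

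Taking closures gives
\[
    \overline{\mathcal P^G_\varepsilon(S)}\subset\overline{\mathcal P^F_{\varepsilon'}(S)}\subset\operatorname{int}B ,
\]
so $\varepsilon$ is admissible for $\mu_B(G;S)$. Letting $\varepsilon\uparrow\mu_B(F;S)-\delta$ yields the one-sided inequality, with the obvious reading if $\mu_B(F;S)=+\infty$. Symmetry of the hypothesis in $F$ and $G$ gives the other inequality, and hence the stated bound $|\mu_B(F;S)-\mu_B(G;S)|\le\delta$.
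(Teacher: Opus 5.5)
Your proposal is correct and follows essentially the same route as the paper: every $\varepsilon$-perturbed $G$-orbit from $S$ is reread as a perturbed $F$-orbit with kicks enlarged by $\delta$. It stays in $\operatorname{int}B$ because the enlarged amplitude is below $\mu_B(F;S)$, and the reverse inequality follows by symmetry. Your explicit induction, which carries membership in $\mathcal P^F_{\varepsilon'}(S)\subset\operatorname{int}B$, and the slack $\varepsilon'$ simply make rigorous the paper's terser ``until a possible first exit from $B$'' phrasing. The separate $\varepsilon=0$ case is harmless but unnecessary, since positive $\varepsilon$ already suffice for the supremum.
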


\begin{proof}
Let $0<\varepsilon<\mu_B(F;S)-\delta$. A $G$-perturbed step with error smaller than $\varepsilon$ satisfies
\[
    \|x_{k+1}-F(x_k)\|\leq
    \|x_{k+1}-G(x_k)\|
    +
    \|G(x_k)-F(x_k)\|<
    \varepsilon+\delta.
\]
Thus, until a possible first exit from $B$, every $\varepsilon$-perturbed orbit of $G$ is an $(\varepsilon+\delta)$-perturbed orbit of $F$. Since $\varepsilon+\delta<\mu_B(F;S)$, the latter orbit cannot leave $B$ and in fact remains compactly contained in $\operatorname{int}B$. Therefore $\varepsilon\leq\mu_B(G;S)$. Letting $\varepsilon\uparrow\mu_B(F;S)-\delta$ gives $\mu_B(G;S) \geq \mu_B(F;S)-\delta$. Interchanging $F$ and $G$ gives the reverse inequality.
\end{proof}

Suppose now that $\Psi_h$ has one-step order $q$ on $B$, in the sense that
\begin{equation}
\label{eq:numerical-one-step-defect}
    \delta_h
    :=
    \sup_{x\in B}
    \|\Psi_h(x)-\phi^h(x)\|
    \leq C_Bh^{q+1}.
\end{equation}

\begin{corollary}[Fixed-seed numerical intensity]
\label{cor:fixed-seed-numerical-intensity}
Under \eqref{eq:numerical-one-step-defect},
\begin{equation}
\label{eq:fixed-seed-intensity-error}
    \left|
    \mu_{h,B}^{\Psi}(A)
    -
    \mu_{h,B}^{\phi}(A)
    \right|
    \leq
    C_Bh^{q+1}.
\end{equation}
Consequently,
\[
    \frac{\mu_{h,B}^{\Psi}(A)}{h}
    =
    \mu_B(A)+O(h)+O(h^q).
\]
If $B$ realizes the full intensity, so that $\mu_B(A)=\mu(A)$, then
\[
    \frac{\mu_{h,B}^{\Psi}(A)}{h}
    =
    \mu(A)+O(h)+O(h^q).
\]
\end{corollary}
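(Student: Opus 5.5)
The estimate \eqref{eq:fixed-seed-intensity-error} follows directly from Theorem~\ref{thm:block-intensity-map-stability}. I will apply it with $F=\phi^h$, $G=\Psi_h$ and seed $S=A$. For this the seed has to satisfy $A\subset\operatorname{int}B$. This is implicit in the setting, since $\mu_B(A)$ is only meaningful when $A\subset\operatorname{int}B$, and the block sampling bounds \eqref{eq:block-sampling-bounds} already presuppose it. The hypothesis \eqref{eq:numerical-one-step-defect} supplies $\sup_{x\in B}\|\phi^h(x)-\Psi_h(x)\|\le\delta_h\le C_Bh^{q+1}$. The theorem then yields $|\mu^\Psi_{h,B}(A)-\mu^\phi_{h,B}(A)|\le\delta_h\le C_Bh^{q+1}$ directly. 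The only point to check is that the block intensities are finite, so that the difference is meaningful. This holds whenever $\mu_B(A)<\infty$, since then \eqref{eq:block-sampling-bounds} bounds $\mu^\phi_{h,B}(A)$, and the stability estimate transfers finiteness to $\mu^\Psi_{h,B}(A)$.

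For the normalized statement, I will divide \eqref{eq:fixed-seed-intensity-error} by $h$, which gives $|\mu^\Psi_{h,B}(A)/h-\mu^\phi_{h,B}(A)/h|\le C_Bh^q$. Next I use the block version \eqref{eq:block-sampling-bounds} of Theorem~\ref{thm:sampling}, together with the expansions \eqref{eq:lower-factor-expansion}--\eqref{eq:upper-factor-expansion} exactly as in Corollary~\ref{cor:sampling-first-order} but with $\mu_B(A)$ in place of $\mu(A)$. This gives $\mu^\phi_{h,B}(A)/h=\mu_B(A)+O(h)$, with explicit constants $-\mu_B(A)L$ below and $\mu_B(A)L/2$ above. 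By the triangle inequality, $\mu^\Psi_{h,B}(A)/h=\mu_B(A)+O(h)+O(h^q)$. When $\mu_B(A)=\mu(A)$, substituting gives the final claim.

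I do not expect a genuine obstacle. The one step that deserves care is the justification of \eqref{eq:block-sampling-bounds} on $B$, which the paper asserts applies verbatim. The inclusions of Propositions~\ref{prop:discrete-to-continuous} and \ref{prop:continuous-to-discrete} are global, so compact containment in $\operatorname{int}B$ transfers in both directions exactly as compact containment in $\mathcal D(A)$ did in the proof of Theorem~\ref{thm:sampling}. The Lipschitz constant $L$ may be taken on a neighborhood of $B$ in the spirit of Remark~\ref{rem:localized-sampling}. I will therefore take \eqref{eq:block-sampling-bounds} as given and present the corollary as a two-line consequence of it.
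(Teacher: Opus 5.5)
Your proposal is correct and follows the paper's proof: apply Theorem~\ref{thm:block-intensity-map-stability} with $F=F_h$, $G=\Psi_h$, $S=A$ to obtain \eqref{eq:fixed-seed-intensity-error}, then divide by $h$ and combine with the block sampling bounds \eqref{eq:block-sampling-bounds} and the expansions used for Corollary~\ref{cor:sampling-first-order}. Your remarks on $A\subset\operatorname{int}B$ and on finiteness of the block intensities are bookkeeping that the paper leaves implicit, and they are harmless.
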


\begin{proof}
Apply Theorem~\ref{thm:block-intensity-map-stability} with $F=F_h$, $G=\Psi_h$, and $S=A$, and combine the result with \eqref{eq:block-sampling-bounds}.
\end{proof}

The estimate \eqref{eq:fixed-seed-intensity-error} is the precise version of the heuristic statement that a local one-step error of order $h^{q+1}$ perturbs discrete block intensity by the same order. It compares the maps using the same initial seed $A$. A separate term is required when the numerical reachable set is initiated from a different numerical attractor.

We discuss continued numerical attractors.

Assume that $\Psi_h$ admits a distinguished compact set $A_h\subset\operatorname{int}B$, typically a numerical attractor. Neither invariance nor attraction of $A_h$ is used in the comparison below; only its Hausdorff distance to $A$ enters. Let $\eta_h:=d_H(A_h,A)$, where $d_H$ denotes the Hausdorff distance. Suppose also that $F_h$ is $M_h$-Lipschitz on a neighborhood containing $A\cup A_h$. Under the global $L$-Lipschitz hypothesis on $f$, one may take $M_h=e^{Lh}$.

\begin{theorem}[Numerical intensity with attractor continuation]
\label{thm:numerical-intensity}
Assume that:
\begin{enumerate}
\item $B\Subset\mathcal D(A)$ is a fixed common trapping block;
\item $A\cup A_h\subset\operatorname{int}B$;
\item $\Psi_h$ and $F_h$ are defined on $B$ and satisfy $\sup_{x\in B}\|\Psi_h(x)-F_h(x)\|\leq\delta_h$;
\item $F_h$ is $M_h$-Lipschitz on a neighborhood of $A\cup A_h$;
\item $d_H(A_h,A)\leq\eta_h$.
\end{enumerate}
Then
\begin{equation}
\label{eq:numerical-attractor-intensity-error}
    \left|
    \mu_{h,B}^{\Psi}(A_h)
    -
    \mu_{h,B}^{\phi}(A)
    \right|
    \leq
    \delta_h+M_h\eta_h.
\end{equation}
In particular, if $\delta_h+M_h\eta_h = O(h^{q+1})$, then $\frac{\mu_{h,B}^{\Psi}(A_h)}{h} = \mu_B(A)+O(h)+O(h^q)$. If, additionally, $\mu_B(A)=\mu(A)$, then $\frac{\mu_{h,B}^{\Psi}(A_h)}{h} = \mu(A)+O(h)+O(h^q)$.
\end{theorem}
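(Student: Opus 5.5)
We split the error by the triangle inequality through the intermediate quantity $\mu^{\Psi}_{h,B}(A)=\mu_B(\Psi_h;A)$, which uses the numerical map but the exact seed:
\[
\bigl|\mu_{h,B}^{\Psi}(A_h)-\mu_{h,B}^{\phi}(A)\bigr|
\le
\bigl|\mu_B(\Psi_h;A_h)-\mu_B(\Psi_h;A)\bigr|
+\bigl|\mu_B(\Psi_h;A)-\mu_B(F_h;A)\bigr|.
\]
The second term is at most $\delta_h$ by Theorem~\ref{thm:block-intensity-map-stability} with $F=F_h$, $G=\Psi_h$, $S=A$; this is exactly Corollary~\ref{cor:fixed-seed-numerical-intensity}. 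This is the ``map'' part of the error.

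The first term is a seed-change estimate. The natural idea is to absorb the seed discrepancy into the first kick. If $x_0\in A_h$, choose $a\in A$ with $\|x_0-a\|\le\eta_h$. Then $\|F_h(x_0)-F_h(a)\|\le M_h\eta_h$, so the first step $x_1=\Psi_h(x_0)+e_0$ of an $\varepsilon$-perturbed $\Psi_h$-orbit from $A_h$ satisfies
\[
\|x_1-F_h(a)\|\le \|e_0\|+\delta_h+M_h\eta_h .
\]
Subsequent steps are $(\varepsilon+\delta_h)$-perturbed $F_h$-steps, as in the proof of Theorem~\ref{thm:block-intensity-map-stability}. Hence every $\varepsilon$-perturbed $\Psi_h$-orbit from $A_h$ that stays in $B$ is an $(\varepsilon+\delta_h+M_h\eta_h)$-perturbed $F_h$-orbit from $A$. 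The stability theorem gives the map part directly, so it is cleaner to compare $F_h$ with itself under the two seeds, $\mu_B(F_h;A_h)$ versus $\mu_B(F_h;A)$, with error $M_h\eta_h$, and then pass to $\Psi_h$ with seed $A_h$ at cost $\delta_h$. For that route, we use the intermediate $\mu_B(F_h;A_h)$ instead:
\[
\bigl|\mu_B(\Psi_h;A_h)-\mu_B(F_h;A)\bigr|
\le
\bigl|\mu_B(\Psi_h;A_h)-\mu_B(F_h;A_h)\bigr|
+\bigl|\mu_B(F_h;A_h)-\mu_B(F_h;A)\bigr| .
\]
The first term is at most $\delta_h$ by Theorem~\ref{thm:block-intensity-map-stability} with $S=A_h\subset\operatorname{int}B$. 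For the second term, with $\varepsilon<\mu_B(F_h;A)-M_h\eta_h$, the shifted first kick shows that every point of $\mathcal P^{F_h}_\varepsilon(A_h)$ reached in $N\ge1$ steps lies in $\mathcal P^{F_h}_{\varepsilon+M_h\eta_h}(A)$. Its closure therefore lies in $\operatorname{int}B$, by the strict-margin argument as in Proposition~\ref{prop:discrete-first-contact}: enlarge $\varepsilon$ slightly to $\varepsilon'$ still below the threshold. The symmetric argument, using points of $A_h$ within $\eta_h$ of $A$, gives the reverse inequality.

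The main obstacle is the zero-step part of the reachable set. $\mathcal P^{F_h}_\varepsilon(A_h)$ contains $A_h$ itself, which is not reached by a kick from $A$. This is harmless because $A_h\subset\operatorname{int}B$ by hypothesis (ii), and likewise $A\subset\operatorname{int}B$ for the reverse direction. A second subtlety is that $M_h$-Lipschitz is assumed only on a neighbourhood of $A\cup A_h$, but we apply it only to pairs $(x_0,a)$ with $x_0\in A_h$, $a\in A$ within $\eta_h$. We should check that the segment, or at least both points, lie in that neighbourhood; since both lie in $A\cup A_h$, the Lipschitz inequality applies directly to the two points. As in Theorem~\ref{thm:block-intensity-map-stability}, the argument is run until a possible first exit from $B$, which never occurs under the subcritical choice of amplitude. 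Finally, dividing \eqref{eq:numerical-attractor-intensity-error} by $h$ and combining with \eqref{eq:block-sampling-bounds} and Corollary~\ref{cor:sampling-first-order} (applied on $B$) gives the stated asymptotics, $O(h^{q+1})/h=O(h^q)$.
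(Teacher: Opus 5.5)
Your proof is correct and uses the same key idea as the paper: absorb the seed discrepancy into the first kick via the Lipschitz bound on $F_h$ at a pair $a_h\in A_h$, $a\in A$ with $\|a_h-a\|\le\eta_h$. The difference is organizational. The paper makes a single pass. It takes a perturbed $\Psi_h$-orbit from $a_h$, replaces only the initial point by $a$, bounds the first step by $\varepsilon+\delta_h+M_h(\eta_h+\zeta)$ and every later step by $\varepsilon+\delta_h$, and then runs the argument in reverse. Map error and seed error are thus handled together, and the auxiliary $\zeta$ avoids needing the Hausdorff distance to be attained.

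You instead factor through $\mu_B(F_h;A_h)$. The map error is then exactly Theorem~\ref{thm:block-intensity-map-stability} with seed $A_h$. The seed error becomes a standalone estimate $|\mu_B(F;S_1)-\mu_B(F;S_2)|\le M\,d_H(S_1,S_2)$ for a map $F$ that is $M$-Lipschitz near $S_1\cup S_2$; you attain the distances using compactness of $A$ and $A_h$. This gives a modular, reusable seed-stability lemma at the cost of one extra triangle inequality. You are also more explicit than the paper about the zero-step points $A_h$ and $A$, which lie in $\operatorname{int}B$ by hypothesis, and about closures.

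Your final choice of intermediate is the right one. The first decomposition you wrote, through $\mu_B(\Psi_h;A)$, would have needed either a Lipschitz bound on $\Psi_h$, which is not assumed, or a detour through $F_h$ costing extra $\delta_h$ terms.

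One simplification: the closure step needs no enlargement argument in the style of Proposition~\ref{prop:discrete-first-contact}. The admissible amplitudes in the block-intensity definition form a downward-closed set by monotonicity of reachable sets, so $\varepsilon+M_h\eta_h<\mu_B(F_h;A)$ already gives $\overline{\mathcal P^{F_h}_{\varepsilon+M_h\eta_h}(A)}\subset\operatorname{int}B$.
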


\begin{proof}
Let $a_h\in A_h$ and choose $a\in A$ with $\|a_h-a\|\leq\eta_h+\zeta$ for arbitrary $\zeta>0$. Consider a perturbed $\Psi_h$-orbit $x_0,x_1,\ldots$ with $x_0=a_h$ and perturbation amplitude $\varepsilon$. Define a sequence for $F_h$ by $y_0:=a$, $y_k:=x_k$, $k\geq1$. For its first step,
\begin{align*}
    \|y_1-F_h(y_0)\|
    &=
    \|x_1-F_h(a)\| \leq
    \|x_1-\Psi_h(a_h)\|
    +
    \|\Psi_h(a_h)-F_h(a_h)\|
    +
    \|F_h(a_h)-F_h(a)\|\\[2pt]
    &<
    \varepsilon+\delta_h
    +M_h(\eta_h+\zeta).
\end{align*}
For every subsequent step, $\|y_{k+1}-F_h(y_k)\| < \varepsilon+\delta_h$. Thus, apart from the initial point $a_h\in\operatorname{int}B$, the $\Psi_h$-reachable set from $A_h$ is contained in the $F_h$-reachable set from $A$ with perturbation enlarged by $\delta_h+M_h(\eta_h+\zeta)$.

The same argument in the opposite direction begins with $a\in A$, chooses $a_h\in A_h$ within Hausdorff distance, and interprets an $F_h$-perturbed orbit as a $\Psi_h$-perturbed orbit. Therefore, 
\[
\begin{aligned}
    \mu_{h,B}^{\Psi}(A_h)
    &\geq
    \mu_{h,B}^{\phi}(A)
    -\delta_h-M_h(\eta_h+\zeta),\\
    \mu_{h,B}^{\phi}(A)
    &\geq
    \mu_{h,B}^{\Psi}(A_h)
    -\delta_h-M_h(\eta_h+\zeta).
\end{aligned}
\]
Letting $\zeta\downarrow0$ proves \eqref{eq:numerical-attractor-intensity-error}. The normalized estimates follow from \eqref{eq:block-sampling-bounds}.
\end{proof}

\begin{corollary}[Equilibrium-preserving methods]
\label{cor:equilibrium-preserving}
Suppose that $A=\{a\}$ is an equilibrium and that the one-step method preserves equilibria exactly: $f(a)=0$ implies $\Psi_h(a)=a$. Then one may take $A_h=A$, so $\eta_h=0$. If
\eqref{eq:numerical-one-step-defect} holds, then
\[
    \left|
    \mu_{h,B}^{\Psi}(A)
    -
    \mu_{h,B}^{\phi}(A)
    \right|
    \leq
    C_Bh^{q+1},
\]
and
\[
    \frac{\mu_{h,B}^{\Psi}(A)}h
    =
    \mu_B(A)+O(h)+O(h^q).
\]
\end{corollary}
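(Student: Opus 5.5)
The plan is to read Corollary~\ref{cor:equilibrium-preserving} as the special case $A_h=A$ of Theorem~\ref{thm:numerical-intensity}, or more directly as an instance of Corollary~\ref{cor:fixed-seed-numerical-intensity}. Both quantities compared in the statement, $\mu^\Psi_{h,B}(A)$ and $\mu^\phi_{h,B}(A)$, use the same seed $A=\{a\}$, so no continuation term should appear.

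First, I would check that $A_h:=A$ is a legitimate distinguished set for $\Psi_h$. Since $f(a)=0$, exact preservation of equilibria gives $\Psi_h(a)=a$, so $\{a\}$ is invariant for $\Psi_h$. Strictly speaking, Theorem~\ref{thm:numerical-intensity} uses neither invariance nor attraction of $A_h$; it only needs $A_h\subset\operatorname{int}B$ and a bound on $d_H(A_h,A)$. The first holds because $A\subset\operatorname{int}B$ by the standing choice of block, and the second is trivial: $d_H(A,A)=0$, so we may take $\eta_h=0$. The invariance remark matters only for interpretation, since it makes $A$ the natural numerical attractor.

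Second, I would apply Theorem~\ref{thm:block-intensity-map-stability} with $F=F_h=\phi^h$, $G=\Psi_h$, $S=A$ and $\delta=\delta_h$. By \eqref{eq:numerical-one-step-defect}, $\delta_h\le C_Bh^{q+1}$, so this gives $|\mu^\Psi_{h,B}(A)-\mu^\phi_{h,B}(A)|\le C_Bh^{q+1}$. Equivalently, Theorem~\ref{thm:numerical-intensity} with $\eta_h=0$ gives the bound $\delta_h+M_h\cdot 0=\delta_h$.

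Third, for the normalized estimate, I would divide by $h$ to get a difference of order $O(h^q)$. Combining this with the block sampling bounds \eqref{eq:block-sampling-bounds} gives $\mu^\phi_{h,B}(A)/h=\mu_B(A)+O(h)$, by the same expansion as in Corollary~\ref{cor:sampling-first-order}: the factors $1/(1+Lh)$ and $(e^{Lh}-1)/(Lh)$ are both $1+O(h)$. Adding the two errors yields $\mu_B(A)+O(h)+O(h^q)$.

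There is no real obstacle here. The only point requiring care is that the hypotheses of the block-stability theorem hold, namely that $A$ lies in $\operatorname{int}B$ and that the defect bound holds on all of $B$. Both are part of the assumed setting.
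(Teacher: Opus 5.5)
Your proposal is correct and follows the paper's own route: the corollary is the case $A_h=A$, $\eta_h=0$ of Theorem~\ref{thm:numerical-intensity}, which is the same as Corollary~\ref{cor:fixed-seed-numerical-intensity} with seed $A$. The normalized rate then comes from the block sampling bounds \eqref{eq:block-sampling-bounds}, and you correctly note that invariance of $\{a\}$ under $\Psi_h$ is not needed for the estimate itself.
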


Most standard Runge--Kutta methods preserve exact equilibria, so Corollary~\ref{cor:equilibrium-preserving} applies directly to the scalar benchmark in Example~\ref{ex:sharp-scalar} and to many bistable equilibrium models.

\begin{remark}[What local consistency controls]
A defect $\sup_B\|\Psi_h-\phi^h\|=O(h^{q+1})$ does not by itself imply existence or convergence of a numerical attractor, convergence of the numerical basin, or equality of block and unrestricted intensity \citep{kloeden1986stable,grune2001asymptotic,garay2005attractors}; hence Theorem~\ref{thm:numerical-intensity} takes the continuation of $A_h$ as a hypothesis. If only $\eta_h=O(h^q)$ is known, the normalized estimate loses an order unless $A_h=A$ or $\eta_h=O(h^{q+1})$. Replacing block by unrestricted intensity further requires a common block realizing the escape threshold, which the block formulation makes explicit rather than hiding in a local consistency assumption.
\end{remark}

The results of this section establish the quantitative bridge between continuous controls, discrete kicks, and numerical one-step errors. For exact sampling, normalized intensity converges with a generally sharp $O(h)$ error. For a one-step approximation of order $q$, the additional fixed-block map error is $O(h^q)$ after normalization, provided the seed set is fixed or the continued numerical attractor is controlled at the corresponding effective order. In Section~\ref{sec:nonautonomous}, we show that the same reachability mechanism extends uniformly to attracting invariant graphs over a nonautonomous base.

\section{Nonautonomous intensity and averaging}
\label{sec:nonautonomous}

Many systems of interest are subject to external forcing that varies periodically, quasiperiodically, aperiodically, or chaotically in time. In this setting, solutions of a single nonautonomous differential equation do not generate an autonomous dynamical system on the state space alone. A standard remedy is to include the forcing phase as an additional variable and formulate the dynamics as a cocycle or skew-product flow; see \citep{kloeden2011nonautonomous,carvalho2012attractors}. The attracting object is then generally an invariant graph over the forcing base, rather than a stationary subset of the fiber.

This section extends intensity to attracting invariant graphs and proves that the continuous--discrete sampling theorem remains valid uniformly over the forcing base. We subsequently distinguish this global basin quantity from hyperbolic continuation and local response. Finally, we examine what averaging theory can and cannot imply about intensity.

\subsection{Cocycles, invariant graphs, and fiberwise intensity}
\label{subsec:skew-product-framework}

Let $(\Omega,\theta)$ be a compact metric flow ($\theta_t$ a homeomorphism for each $t\in\mathbb R$), and consider $\dot x=f(\theta_t\omega,x)$ with $f$ continuous and locally Lipschitz in $x$, uniformly on compact $\omega$-sets. The solution $\varphi(t,\omega,x)$ satisfies the cocycle identity $\varphi(t+s,\omega,x)=\varphi(t,\theta_s\omega,\varphi(s,\omega,x))$ and generates the skew-product semiflow $\Pi_t(\omega,x)=(\theta_t\omega,\varphi(t,\omega,x))$; the standard Bebutov hull turns a bounded uniformly continuous time-dependent field into such a family \citep{kloeden2011nonautonomous}.

\begin{definition}[Uniformly attracting invariant graph and fiber basin]
\label{def:attracting-invariant-graph}

A continuous $a:\Omega\to\mathbb R^d$ is an invariant graph if
$a(\theta_t\omega) = \varphi(t,\omega,a(\omega))$ for $t\geq0$, $\omega\in\Omega$.
Its graph $\mathcal A = \{(\omega,a(\omega)):\omega\in\Omega\}$
is compact and invariant.

The invariant graph is called uniformly attracting if there exists an open neighborhood $\mathcal U$ of $\mathcal A$ such that
$\lim_{t\to\infty}
\sup_{(\omega,x)\in\mathcal U}
\|
\varphi(t,\omega,x)-a(\theta_t\omega)
\|
=
0$.
The fiber basin is defined by
$\mathcal B(\mathcal A)
=
\left\{
(\omega,x):
\|
\varphi(t,\omega,x)-a(\theta_t\omega)
\|
\to0
\right\}$,
with fibers
$\mathcal B_\omega(\mathcal A)
=
\{x:(\omega,x)\in\mathcal B(\mathcal A)\}$.

Under the uniform attraction assumption, $\mathcal B(\mathcal A)$ is open and positively $\Pi_t$-invariant. Indeed, if $(\omega,x)\in\mathcal B(\mathcal A)$, then for some $T>0$, $\Pi_T(\omega,x)\in\mathcal U$; by continuity of $\Pi_T$,
$\Pi_T^{-1}(\mathcal U)$ is a neighborhood of $(\omega,x)$ contained in $\mathcal B(\mathcal A)$.
\end{definition}

To quantify the resilience of such nonautonomous attractors, we subject the system to bounded persistent disturbances. Specifically, we perturb only the fiber dynamics and leave the forcing base unchanged: $\dot x = f(\theta_t\omega,x)+u(t)$, $\Vert{}u\Vert{}_\infty<r$. The solution starting at $x$ above $\omega$ is denoted by $\varphi_u(t,\omega,x)$. The base coordinate of the controlled trajectory remains $\theta_t\omega$.

Analogous to the autonomous framework in Section~\ref{sec:intensity}, for $S\subset\Omega\times\mathbb R^d$, we define the exact-time fiber-controlled reachable set by
\[
    \mathcal R_r^{\mathrm{na}}(S,t)
    :=
    \bigl\{
        (\theta_t\omega,\varphi_u(t,\omega,x)):
        (\omega,x)\in S,
        u\in\mathcal U_r([0,t])
    \bigr\}.
\]
The corresponding reachable tube is
    $\mathcal P_r^{\mathrm{na}}(S)
    :=
    \bigcup_{t\geq0}
    \mathcal R_r^{\mathrm{na}}(S,t)$.
For the invariant graph,
\[
    \mathcal P_r^{\mathrm{na}}(\mathcal A)
    =
    \bigl\{
    (\theta_t\omega,
     \varphi_u(t,\omega,a(\omega))):
    \omega\in\Omega,\ t\geq0,
    u\in\mathcal U_r([0,t])
    \bigr\}.
\]

\begin{definition}[Uniform nonautonomous intensity]
\label{def:nonautonomous-intensity}

Let $\mathcal A$ be a uniformly attracting invariant graph.
Its uniform fiberwise intensity is defined by
\[
    \mu_{\mathrm{na}}(\mathcal A)
    :=
    \sup\biggl\{
    r\geq0:
    \overline{
        \mathcal P_r^{\mathrm{na}}(\mathcal A)
    }
    \subset K
    \Subset_{\Omega\times\mathbb R^d}
    \mathcal B(\mathcal A)
    \text{ for some compact }
    K\subset\Omega\times\mathbb R^d
    \biggr\}.
\]
\end{definition}

The strict compact containment in the product space $\Omega\times\mathbb R^d$ is essential. Since $\mathcal B(\mathcal A)$ is open and $K\Subset\mathcal B(\mathcal A)$, compact containment yields a positive distance from the basin boundary:
$\operatorname{dist}
    \left(
        K,
        (\Omega\times\mathbb R^d)
        \setminus
        \mathcal B(\mathcal A)
    \right)
    >0$.
Consequently, the fiber sections satisfy the uniform estimate
$\inf_{\omega\in\Omega}
    \operatorname{dist}
    \left(
        K_\omega,
        \mathbb R^d\setminus
        \mathcal B_\omega(\mathcal A)
    \right)
    >0$,
where
$K_\omega
:=
\{x:(\omega,x)\in K\}$ and
$\mathcal B_\omega(\mathcal A)
:=
\{x:(\omega,x)\in\mathcal B(\mathcal A)\}$.

\begin{remark}[Why the definition is uniform over the base]
\label{rem:uniform-over-base}
In a nonautonomous system the stability properties of an invariant graph may depend on the base point $\omega$. In particular, different phases may have different distances from the fiber basin boundary. Defining the intensity for a single fixed phase may therefore produce a phase-dependent quantity and may miss the most vulnerable base states. Definition~\ref{def:nonautonomous-intensity} instead requires one common disturbance amplitude and one compact safety margin in the product space $\Omega\times\mathbb R^d$, valid simultaneously for all $\omega\in\Omega$. Hence the resulting intensity is intrinsic to the invariant graph and independent of the choice of initial phase.
\end{remark}

If $\Omega$ consists of one point, then Definition~\ref{def:nonautonomous-intensity} reduces to Definition~\ref{def:continuous-intensity}. More generally, if $f(\omega,x)=f_0(x)$ is independent of $\omega$ and $a(\omega)\equiv A$ is a constant equilibrium graph, then $\mu_{\mathrm{na}}(\mathcal A)=\mu(A)$.

\subsection{Sampling an attracting invariant graph}
\label{subsec:nonautonomous-sampling}

For $h>0$, define the sampled skew-product map $F_h(\omega,x) := \bigl(\theta_h\omega, \varphi(h,\omega,x) \bigr)$. A fiberwise $\varepsilon$-perturbed orbit satisfies
\begin{equation*}
\begin{aligned}
    \omega_{k+1}
    &=
    \theta_h\omega_k,\\
    x_{k+1}
    &=
    \varphi(h,\omega_k,x_k)+e_k,
    \qquad
    \|e_k\|<\varepsilon.
\end{aligned}
\end{equation*}
Only the fiber is perturbed. Denote the resulting reachable set from $\mathcal A$ by $\mathcal P_{\varepsilon,h}^{\mathrm{na},d}(\mathcal A)$.

\begin{definition}[Sampled nonautonomous intensity]
\label{def:sampled-na-intensity}

The discrete fiberwise intensity of $\mathcal A$ for $F_h$ is
\[
    \mu_{\mathrm{na},h}(\mathcal A)
    :=
    \sup\bigl\{
    \varepsilon\geq0:
    \overline{
        \mathcal P_{\varepsilon,h}^{\mathrm{na},d}
        (\mathcal A)
    }
    \subset K
    \Subset_{\Omega\times\mathbb R^d}
    \mathcal B(\mathcal A)
    \text{ for some compact }
    K\subset\Omega\times\mathbb R^d
    \bigr\}.
\]
\end{definition}

To establish the relationship between the sampled discrete intensity and its continuous counterpart, we impose the following standard assumptions on the system.
\begin{enumerate}
\renewcommand{\theenumi}{(NA\arabic{enumi})}
\renewcommand{\labelenumi}{\textnormal{\theenumi}}
\item
\label{hyp:na-compact-base}
$(\Omega,\theta)$ is a compact metric flow defined for all $t\in\mathbb R$.
\item
\label{hyp:na-uniform-lipschitz}
There exists $L\geq0$ such that $\|f(\omega,x)-f(\omega,y)\| \leq L\|x-y\|$ for every $\omega\in\Omega$ and $x,y\in\mathbb R^d$.
\item
\label{hyp:na-forward-complete}
The controlled fiber equation is forward complete for bounded controls.
\item
\label{hyp:na-attracting-graph}
$\mathcal A$ is a continuous uniformly attracting invariant graph and
$0<\mu_{\mathrm{na}}(\mathcal A)<\infty$.
\end{enumerate}
As in the autonomous theorem, global Lipschitz continuity may be replaced by a uniform Lipschitz bound on a common compact trapping tube containing the interpolated trajectories.

To bound the discrete reachable set using the continuous dynamics, we first establish a forward inclusion by interpolating every fiber kick. Specifically, on a step beginning at $(\omega_k,x_k)$, set $\widetilde x_k(s)
:= \varphi(s,\omega_k,x_k) + \frac{s}{h}e_k$, $0\leq s\leq h$.
Then $\dot{\widetilde x}_k(s) = f(\theta_s\omega_k,\widetilde x_k(s)) +u_k(s)$, where $u_k(s)
    =
    f\bigl(
        \theta_s\omega_k,
        \varphi(s,\omega_k,x_k)
    \bigr)
    -
    f\bigl(
        \theta_s\omega_k,
        \widetilde x_k(s)
    \bigr)
    +
    \frac1h e_k$.
Uniform fiber Lipschitz continuity gives $\|u_k(s)\| \leq \left(\frac1h+L\right)\|e_k\|$. Concatenating these controls yields
\begin{equation}
\label{eq:na-discrete-continuous-inclusion}
    \mathcal P_{\varepsilon,h}^{\mathrm{na},d}
    (\mathcal A)
    \subset
    \mathcal P_{\varepsilon/h+L\varepsilon}^{\mathrm{na}}
    (\mathcal A).
\end{equation}

Conversely, Gr\"onwall's inequality applied with the same evolving base phase gives
\begin{equation}
\label{eq:na-gronwall-one-step}
    \left\|
    \varphi_u(h,\omega,x)
    -
    \varphi(h,\omega,x)
    \right\|
    <
    \alpha_h(r)
\end{equation}
whenever $\|u\|_\infty<r$, where
\[
    \alpha_h(r)
    =
    \begin{cases}
    \displaystyle
    r\frac{e^{Lh}-1}{L},
        &L>0,\\[2ex]
    rh,&L=0.
    \end{cases}
\]
Thus sampled controlled trajectories are $\alpha_h(r)$-perturbed fiber orbits. The only remaining issue is that an arbitrary controlled endpoint need not occur at an integer multiple of $h$.

\begin{lemma}[Nonautonomous time alignment]
\label{lem:nonautonomous-time-alignment}
Let $y=\varphi_u(T,\omega,a(\omega))$ for some $T\geq0$ and $\|u\|_\infty<r$. Then there exist $N\in\mathbb N_0$, a phase $\omega_-\in\Omega$, and a control $\widehat u$ on $[0,Nh]$ such that $\theta_{Nh}\omega_-
    =
    \theta_T\omega$, 
    $\varphi_{\widehat u}
    (Nh,\omega_-,a(\omega_-))
    =
    y$, and 
    $\|\widehat u\|_\infty
    =
    \|u\|_\infty$.
\end{lemma}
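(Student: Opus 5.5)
The plan is to mimic the autonomous alignment argument of Lemma~\ref{lem:sampling-time-alignment}, using invertibility of the base flow in place of surjectivity of $\phi^\delta$ on $A$. First I would set $N:=\lceil T/h\rceil$ and $\delta:=Nh-T\in[0,h)$. If $\delta=0$, the choices $\omega_-=\omega$ and $\widehat u=u$ work immediately. For $\delta>0$, I would define the shifted phase $\omega_-:=\theta_{-\delta}\omega$. This is well defined because of \ref{hyp:na-compact-base}: $\theta$ is a flow on all of $\mathbb R$ and each $\theta_t$ is a homeomorphism. The group property then gives $\theta_{Nh}\omega_-=\theta_{Nh-\delta}\omega=\theta_T\omega$, which is the first required identity.

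Next I would use the invariance of the graph to show that the uncontrolled fiber solution starting on the graph over $\omega_-$ reaches the graph point over $\omega$ at time $\delta$. By Definition~\ref{def:attracting-invariant-graph}, $\varphi(\delta,\omega_-,a(\omega_-))=a(\theta_\delta\omega_-)=a(\omega)$. The control is then defined as in the autonomous case: $\widehat u(t)=0$ on $[0,\delta)$ and $\widehat u(t)=u(t-\delta)$ on $[\delta,Nh]$.

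The endpoint identity follows from the cocycle property for the controlled equation:
\[
\varphi_{\widehat u}(Nh,\omega_-,a(\omega_-))=\varphi_{u}\bigl(T,\theta_\delta\omega_-,\varphi(\delta,\omega_-,a(\omega_-))\bigr)=\varphi_u(T,\omega,a(\omega))=y.
\]
To justify this, I would check that on $[\delta,Nh]$ the controlled fiber equation over the base $\theta_t\omega_-$ becomes, after the substitution $t=\delta+s$, the equation $\dot x=f(\theta_s\omega,x)+u(s)$. Uniqueness of solutions, which holds by the uniform Lipschitz bound \ref{hyp:na-uniform-lipschitz}, then identifies the two trajectories. Finally, prepending a zero segment leaves the essential supremum unchanged, so $\|\widehat u\|_\infty=\|u\|_\infty$.

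The only delicate point is why invertibility of the base is needed. Without backward base time, the phase $\omega$ need not have any preimage under $\theta_\delta$. Unlike the autonomous case, the invariance of the graph alone does not supply a preimage, because the fiber point must sit over the correct phase. Assumption \ref{hyp:na-compact-base} resolves this, and everything else is a direct transcription of the autonomous proof.
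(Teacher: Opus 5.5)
Your proposal is correct and follows the paper's proof essentially verbatim. Your $N=\lceil T/h\rceil$ and $\delta=Nh-T$ are the paper's $N=n+1$ and $h-s$ from $T=nh+s$, with the same shifted phase $\theta_{-\delta}\omega$, the same zero-control segment along the invariant graph, and the same time-shifted control. The substitution $t=\delta+s$ plus uniqueness, which you use to justify the endpoint identity, is a detail the paper leaves implicit.
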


\begin{proof}
Write $T=nh+s$, $n\in\mathbb N_0$, $0\leq s<h$. If $s=0$, take $N=n$, $\omega_-=\omega$, and $\widehat u=u$. Suppose $0<s<h$ and define
    $\omega_-
    :=
    \theta_{s-h}\omega
    =
    \theta_{-(h-s)}\omega$.
Start at the graph point $a(\omega_-)$ and use zero control for $h-s$ units of time. Invariance of the graph gives
    $\varphi
    (h-s,\omega_-,a(\omega_-))
    =
    a(\theta_{h-s}\omega_-)
    =
    a(\omega)$.
After this initial unforced segment, apply the original control $u$. The total duration is $(h-s)+T=(n+1)h$. Set $N=n+1$. The final base phase is
    $\theta_{Nh}\omega_-
    =
    \theta_{(n+1)h}\theta_{s-h}\omega
    =
    \theta_{nh+s}\omega
     =
    \theta_T\omega$.
The fiber endpoint is the original endpoint $y$. Prepending a zero control does not change the essential supremum norm.
\end{proof}

\begin{remark}[Why a two-sided base flow is needed]
\label{rem:base-invertibility}
The shifted phase $\theta_{s-h}\omega$ requires negative-time base motion, so the alignment applies to an invertible base; a noninvertible semiflow needs a natural extension or grid-time-restricted definition.
\end{remark}

By resolving the endpoint alignment issue, Lemma~\ref{lem:nonautonomous-time-alignment} combined with the local bound in \eqref{eq:na-gronwall-one-step} yields the converse inclusion:
\begin{equation}
\label{eq:na-continuous-discrete-inclusion}
    \mathcal P_r^{\mathrm{na}}(\mathcal A)
    \subset
    \mathcal P_{\alpha_h(r),h}^{\mathrm{na},d}
    (\mathcal A).
\end{equation}

\begin{theorem}[Sampling of nonautonomous intensity]
\label{thm:nonautonomous-sampling}
Suppose that \ref{hyp:na-compact-base}--\ref{hyp:na-attracting-graph} hold. By combining the mutual inclusions established above, we obtain explicit bounds connecting the discrete and continuous intensities:
\[
    \frac{h\mu_{\mathrm{na}}(\mathcal A)}{1+Lh}
    \leq
    \mu_{\mathrm{na},h}(\mathcal A)
    \leq
    \alpha_h
    \bigl(
        \mu_{\mathrm{na}}(\mathcal A)
    \bigr).
\]
For $L>0$, equivalently,
\[
    \frac{\mu_{\mathrm{na}}(\mathcal A)}{1+Lh}
    \leq
    \frac{\mu_{\mathrm{na},h}(\mathcal A)}{h}
    \leq
    \mu_{\mathrm{na}}(\mathcal A)
    \frac{e^{Lh}-1}{Lh}.
\]
Consequently,
    $\lim_{h\downarrow0}
    \frac{
        \mu_{\mathrm{na},h}(\mathcal A)
    }{h}
    =
    \mu_{\mathrm{na}}(\mathcal A)$,
with
    $\frac{
        \mu_{\mathrm{na},h}(\mathcal A)
    }{h}
    =
    \mu_{\mathrm{na}}(\mathcal A)+O(h)$.
\end{theorem}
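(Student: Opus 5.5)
The plan is to follow the proof of Theorem~\ref{thm:sampling} step for step, with the two product-space inclusions \eqref{eq:na-discrete-continuous-inclusion} and \eqref{eq:na-continuous-discrete-inclusion} in place of Propositions~\ref{prop:discrete-to-continuous} and~\ref{prop:continuous-to-discrete}. First I will make sure both inclusions are honest statements about subsets of $\Omega\times\mathbb R^d$, base coordinates included.

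For \eqref{eq:na-discrete-continuous-inclusion}, take a fiberwise $\varepsilon$-perturbed orbit $(\omega_k,x_k)$ with $\omega_0=\omega$ and $x_0=a(\omega)$. Then $\omega_k=\theta_{kh}\omega$. Concatenating the interpolating controls $u_k$ gives a single control $u$ on $[0,Nh]$ with $\|u\|_\infty<(1/h+L)\varepsilon$, and hence $(\omega_N,x_N)=(\theta_{Nh}\omega,\varphi_u(Nh,\omega,a(\omega)))$. The cocycle identity is what lets the pieces glue together: on $[kh,(k+1)h]$ the vector field seen is $f(\theta_{kh+s}\omega,\cdot)=f(\theta_s\omega_k,\cdot)$. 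For \eqref{eq:na-continuous-discrete-inclusion}, take a point $(\theta_T\omega,y)$ of $\mathcal P^{\mathrm{na}}_r(\mathcal A)$. Lemma~\ref{lem:nonautonomous-time-alignment}, which uses \ref{hyp:na-compact-base} and invertibility of $\theta$, rewrites it as $(\theta_{Nh}\omega_-,\varphi_{\widehat u}(Nh,\omega_-,a(\omega_-)))$ with the same control norm. Sampling at the times $kh$ and applying \eqref{eq:na-gronwall-one-step} on each step then produces an $\alpha_h(r)$-perturbed fiber orbit from $(\omega_-,a(\omega_-))\in\mathcal A$. Here \eqref{eq:na-gronwall-one-step} is used in base phase $\theta_{kh}\omega_-$, via the Gr\"onwall argument of Lemma~\ref{lem:one-step-control-error} with the uniform constant from \ref{hyp:na-uniform-lipschitz}. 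Forward completeness \ref{hyp:na-forward-complete} guarantees that every trajectory involved exists.

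For the lower bound, let $0<\varepsilon<h\mu_{\mathrm{na}}(\mathcal A)/(1+Lh)$, so that $\beta_h(\varepsilon)<\mu_{\mathrm{na}}(\mathcal A)$. Reachable sets are monotone in the amplitude, exactly as in Lemma~\ref{lem:reachability-algebra}, since $\mathcal U_r$ increases with $r$. Hence $\overline{\mathcal P^{\mathrm{na}}_{\beta_h(\varepsilon)}(\mathcal A)}\subset K\Subset\mathcal B(\mathcal A)$ for some compact $K\subset\Omega\times\mathbb R^d$. Taking closures in \eqref{eq:na-discrete-continuous-inclusion} puts $\overline{\mathcal P^{\mathrm{na},d}_{\varepsilon,h}(\mathcal A)}$ inside the same $K$, so $\varepsilon$ is admissible in Definition~\ref{def:sampled-na-intensity}. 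Taking the supremum gives the lower bound.

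For the upper bound, let $0<\varepsilon<\mu_{\mathrm{na},h}(\mathcal A)$ and set $r=\alpha_h^{-1}(\varepsilon)$. Then \eqref{eq:na-continuous-discrete-inclusion} gives $\overline{\mathcal P^{\mathrm{na}}_r(\mathcal A)}\subset\overline{\mathcal P^{\mathrm{na},d}_{\varepsilon,h}(\mathcal A)}\subset K\Subset\mathcal B(\mathcal A)$. So $r$ is admissible in Definition~\ref{def:nonautonomous-intensity}, giving $r\le\mu_{\mathrm{na}}(\mathcal A)$ and therefore $\varepsilon\le\alpha_h(\mu_{\mathrm{na}}(\mathcal A))$. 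Letting $\varepsilon\uparrow\mu_{\mathrm{na},h}(\mathcal A)$ proves the bound. For $L=0$ both sides reduce to $h\mu_{\mathrm{na}}$. Dividing by $h$ gives the normalized form, and the expansions \eqref{eq:lower-factor-expansion}--\eqref{eq:upper-factor-expansion} give the limit and the $O(h)$ rate, just as in Corollary~\ref{cor:sampling-first-order}. Finiteness of $\mu_{\mathrm{na}}$ from \ref{hyp:na-attracting-graph} is needed for the $O(h)$ statement.

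The main obstacle is the continuous-to-discrete direction. One must check that the aligned orbit really starts on $\mathcal A$ and ends at the same point of the product space, base coordinate included. This is the step that depends on invertibility of the base (Remark~\ref{rem:base-invertibility}) and on the graph invariance $\varphi(h-s,\omega_-,a(\omega_-))=a(\omega)$. Lemma~\ref{lem:nonautonomous-time-alignment} supplies exactly these facts, so the remaining work is bookkeeping: the compact sets $K$ live in $\Omega\times\mathbb R^d$, and closures must be taken there rather than fiberwise.
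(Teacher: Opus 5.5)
Your proposal is correct and follows the paper's proof essentially line by line. The lower bound comes from the interpolation inclusion $\mathcal P^{\mathrm{na},d}_{\varepsilon,h}(\mathcal A)\subset\mathcal P^{\mathrm{na}}_{\beta_h(\varepsilon)}(\mathcal A)$, the upper bound from taking $r=\alpha_h^{-1}(\varepsilon)$ together with the Gr\"onwall-plus-alignment inclusion, and the rate from the two factor expansions; your extra bookkeeping (gluing the pieces via $\theta_{kh+s}\omega=\theta_s\omega_k$, tracking the base coordinate, taking closures in $\Omega\times\mathbb R^d$) just makes explicit what the paper establishes in the text preceding the theorem.
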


\begin{proof}
Let
    $0<\varepsilon
    <
    \frac{
        h\mu_{\mathrm{na}}(\mathcal A)
    }{1+Lh}$.
Then
    $\frac{\varepsilon}{h}+L\varepsilon
    <
    \mu_{\mathrm{na}}(\mathcal A)$.
By \eqref{eq:na-discrete-continuous-inclusion}, the discrete reachable set is contained in a continuous reachable set that is compactly contained in $\mathcal B(\mathcal A)$. Hence $\varepsilon \leq \mu_{\mathrm{na},h}(\mathcal A)$. Taking the supremum proves the lower bound.

For the upper bound, let $0<\varepsilon<\mu_{\mathrm{na},h}(\mathcal A)$ and choose $r=\alpha_h^{-1}(\varepsilon)$. By \eqref{eq:na-continuous-discrete-inclusion},
$\mathcal P_r^{\mathrm{na}}(\mathcal A) \subset \mathcal P_{\varepsilon,h}^{\mathrm{na},d}(\mathcal A)$.
Taking closures yields $\overline{\mathcal P_r^{\mathrm{na}}(\mathcal A)} \subset \overline{\mathcal P_{\varepsilon,h}^{\mathrm{na},d}(\mathcal A)}$. Since $\varepsilon<\mu_{\mathrm{na},h}(\mathcal A)$,
the definition of the sampled intensity implies that there exists a compact set $K\Subset\mathcal B(\mathcal A)$ such that $\overline{\mathcal P_{\varepsilon,h}^{\mathrm{na},d}(\mathcal A)}\subset K$. Therefore, $\overline{\mathcal P_r^{\mathrm{na}}(\mathcal A)}\Subset \mathcal B(\mathcal A)$, and hence, by the definition of the continuous intensity, $r\leq\mu_{\mathrm{na}}(\mathcal A)$. Because $\varepsilon=\alpha_h(r)$, we obtain $\varepsilon \leq \alpha_h \bigl(\mu_{\mathrm{na}}(\mathcal A)\bigr)$. Letting $\varepsilon\uparrow\mu_{\mathrm{na},h}(\mathcal A)$ and using continuity of $\alpha_h$ proves the upper bound. The limit and rate follow from the expansions in \eqref{eq:lower-factor-expansion} and \eqref{eq:upper-factor-expansion}.
\end{proof}

\subsection{Hyperbolicity, averaging, and response diagnostics}
\label{subsec:averaging-limitations}

Before connecting reachable sets to deterministic boundary dynamics, it is crucial to distinguish the global intensity $\mu_{\mathrm{na}}(\mathcal A)$ from classical local robustness notions. Suppose now that $f$ is continuously differentiable in $x$. An invariant graph $a$ is uniformly exponentially attracting if there exist constants $C\geq1$ and $\lambda>0$ and a neighborhood $\mathcal U$ of $\mathcal A$ such that $\|\varphi(t,\omega,x)-a(\theta_t\omega)\| \leq Ce^{-\lambda t} \|x-a(\omega)\|$ for all $(\omega,x)\in\mathcal U$ and $t\geq0$.  Such a graph is a nonautonomous analogue of a hyperbolic attracting equilibrium. Standard persistence results imply that sufficiently small perturbations of the vector field possess a nearby attracting invariant graph. In a representative formulation, for every $\epsilon_0>0$ there exists $\delta_{\mathrm{hyp}}(\epsilon_0)>0$ such that $\|\widetilde f-f\|_{C^1(\mathcal N)} < \delta_{\mathrm{hyp}}(\epsilon_0)$ on a suitable neighborhood $\mathcal N$ of $\mathcal A$ implies the existence of a hyperbolic invariant graph $\widetilde a$ satisfying $\sup_{\omega\in\Omega} \|\widetilde a(\omega)-a(\omega)\| < \epsilon_0$. This is a local continuation statement. It should not be identified with intensity.

\begin{proposition}[Local persistence and global intensity]
\label{prop:hyperbolicity-intensity-distinction}
The following quantities describe different aspects of robustness.
\begin{enumerate}
\item
The hyperbolic persistence radius $\delta_{\mathrm{hyp}}(\epsilon_0)$ controls existence of a nearby invariant graph in a prescribed neighborhood. 
\item
The graph displacement $\sup_{\omega\in\Omega}\|\widetilde a(\omega)-a(\omega)\|$ is a local response quantity.
\item
The intensity $\mu_{\mathrm{na}}(\mathcal A)$ controls the global amplitude of fiber forcing required to destroy compact containment inside the basin.
\end{enumerate}
In general, neither the exponential constants $(M,\lambda)$ nor $\delta_{\mathrm{hyp}}(\epsilon_0)$ determine
$\mu_{\mathrm{na}}(\mathcal A)$.
\end{proposition}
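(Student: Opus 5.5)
Items (i) and (ii) of Proposition~\ref{prop:hyperbolicity-intensity-distinction} are descriptive: they restate what the persistence theorem and the displacement measure, and need no proof beyond the definitions. The content is the final sentence, a non-determination claim. I would prove it by counterexample. I will exhibit two systems that share the same local data near the graph but have different intensities. Since the claim only says ``in general'', autonomous scalar examples with $\Omega$ a single point suffice, because then $\mu_{\mathrm{na}}=\mu$ by the reduction noted after Definition~\ref{def:nonautonomous-intensity}. Alternatively, one can take $f(\omega,x)=f_0(x)$ independent of $\omega$ with a constant graph, so the examples genuinely live in the nonautonomous framework.

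\textbf{Construction.} Fix $\lambda>0$ and a width $\rho>0$. Choose a family of scalar $C^1$ (indeed $C^\infty$) vector fields $f_c$, parametrized by $c>0$, with the following properties:
\begin{itemize}
\item $f_c(x)=-\lambda x$ on $[-\rho,\rho]$ for every $c$;
\item $f_c<0$ on $(0,\infty)$ and $f_c\to-\infty$ as $x\to\infty$;
\item on the left, $f_c>0$ on $(b_c,0)$ with $f_c(b_c)=0$, $b_c<-\rho$, and $\max_{[b_c,0]}f_c=c$ whenever $c\ge\lambda\rho$.
\end{itemize}
For example, glue $-\lambda x$ to a bump of height $c$ placed at the left of $-\rho$, and then descend to a zero at $b_c$.

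\textbf{Intensity.} Proposition~\ref{prop:scalar-one-sided} applies, with the mirrored conditions on the right supplied by $f_c\to-\infty$. It gives $\mu(\{0\})=\max_{[b_c,0]}f_c=c$.

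\textbf{Local data.} All the $f_c$ coincide on the neighborhood $\mathcal N=(-\rho,\rho)$. Hence they share:
\begin{itemize}
\item the exponential constants, $C=1$ and rate $\lambda$, with $\mathcal U=(-\rho,\rho)$;
\item the $C^1(\mathcal N)$ persistence radius $\delta_{\mathrm{hyp}}(\epsilon_0)$;
\item the graph displacement map $\widetilde f\mapsto\widetilde a$ for perturbations supported in $\mathcal N$.
\end{itemize}
Yet the intensity $c$ ranges over all of $[\lambda\rho,\infty)$. Therefore no function of $(C,\lambda,\delta_{\mathrm{hyp}})$ can equal $\mu_{\mathrm{na}}$.

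\textbf{Main obstacle.} The delicate point is that $\delta_{\mathrm{hyp}}$ is only described ``in a representative formulation''. To make sure it is genuinely identical across the family, I would define it intrinsically as the supremal $C^1(\mathcal N)$-perturbation size admitting a continued graph within $\epsilon_0$. That quantity depends only on $f|_{\mathcal N}$, which is common to all $f_c$.

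\textbf{Strengthening.} To show that intensity is not even bounded below by the local data, I would use the opposite construction: keep the linear piece, but make the basin boundary $b$ approach $-\rho$ with a small bump, so that $\mu\downarrow$ small. This yields arbitrarily small intensity with fixed $(C,\lambda)$. The case $c<\lambda\rho$ is excluded only because the maximum on $[-\rho,0]$ already equals $\lambda\rho$.
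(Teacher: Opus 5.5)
Your proof is correct, but it follows a different route from the paper's proof. The paper fixes only the linearization. It uses $f_\eta(x)=-x+x^2/\eta$ on $[0,\eta]$, for which $f_\eta'(0)=-1$ for every $\eta$, while the basin boundary sits at $\eta$ and the intensity is $\max_{[0,\eta]}(x-x^2/\eta)=\eta/4\to0$.

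You instead freeze the whole vector field on a fixed neighborhood $(-\rho,\rho)$ and vary a remote bump. This is essentially the paper's later Example~\ref{ex:local-agreement-global-intensity} used as the proof.

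\emph{What your route buys.} Because $f_c|_{\mathcal N}$ is literally common to the family, two things coincide across all $c$:
\begin{itemize}
\item the exponential constants on $\mathcal U$;
\item any $\delta_{\mathrm{hyp}}(\epsilon_0)$ defined intrinsically from $f|_{\mathcal N}$.
\end{itemize}
So non-determination holds however the ``representative'' persistence radius is made precise. In the paper's family only the eigenvalue is truly fixed: the neighborhood where local estimates hold must shrink with $\eta$, and $f_\eta''=2/\eta$ blows up.

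\emph{What the paper's route buys.} It shows that local data give no positive \emph{lower} bound for intensity. Your family shows instead that they give no upper bound.

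Your ``strengthening'' paragraph does not work as written. If $f_c(x)=-\lambda x$ is kept on $[-\rho,\rho]$ and $b<-\rho$, then $\mu\ge\max_{[-\rho,0]}f_c=\lambda\rho$. So intensity cannot be made small at fixed $\rho$, as your own final sentence concedes. Making the intensity small with fixed exponential constants requires shrinking $\rho$ together with the basin. That is exactly the paper's $f_\eta$ construction, and then $\delta_{\mathrm{hyp}}$ on a fixed neighborhood is no longer common to the family.

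Two minor points:
\begin{itemize}
\item A $C^1$ gluing has slope $-\lambda$ at $-\rho$, so it exceeds $\lambda\rho$ just to the left. The attainable range is therefore $(\lambda\rho,\infty)$, not $[\lambda\rho,\infty)$.
\item Proposition~\ref{prop:scalar-one-sided} assumes a forward-complete flow. You should choose $f_c$ with at most linear growth on $(-\infty,b_c)$.
\end{itemize}
None of these points affects the non-determination claim, which your main construction establishes.
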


\begin{proof}
The first two quantities depend on the vector field and its derivative near the graph. By contrast, Definition~\ref{def:nonautonomous-intensity} depends on every region of the basin that can be reached by bounded controls.

This distinction is already visible in autonomous scalar equations, which are special cases of the skew-product setting. For $\eta>0$, consider on $[0,\eta]$, $f_\eta(x) = -x+\frac{x^2}{\eta}$. Extend $f_\eta$ outside this interval to a forward-complete globally Lipschitz vector field so that $0$ remains attracting, $\eta$ is the right basin boundary, and points immediately to the right of $\eta$ move away from $0$. At the attractor, $f_\eta'(0)=-1$ for every $\eta$, so the local exponential rate is unchanged. However, the positive control required to cross the right basin boundary must overcome
    $-f_\eta(x)
    =
    x-\frac{x^2}{\eta}$,
whose maximum is
    $\max_{0\leq x\leq\eta}
    \left(
        x-\frac{x^2}{\eta}
    \right)
    =
    \frac{\eta}{4}$.
Thus the intensity can tend to zero while the linearized attraction rate remains fixed. The construction changes the basin geometry arbitrarily close to the attractor while preserving the linearized rate. Hence even local hyperbolicity data alone cannot determine the global intensity. Local hyperbolicity therefore supplies no universal positive lower bound for global intensity.
\end{proof}

A common robust trapping tube can relate the two theories in one direction. If a compact neighborhood of the graph is simultaneously a domain of hyperbolic continuation and an $r$-robust trapping tube for the controlled inclusion, then $\mu_{\mathrm{na}}(\mathcal A)\geq r$. This conclusion comes from the trapping geometry, not from hyperbolicity alone.

While hyperbolic continuation addresses the existence of perturbed attractors, classical averaging theory addresses trajectory approximation. However, it presents similar limitations regarding global basin geometry. For slowly responding nonautonomous systems driven by a base flow $\theta_t$ on a compact metric space $\Omega$,
\begin{equation}
\label{eq:slow-nonautonomous-system}
    \dot x_\epsilon = \epsilon f(\theta_t\omega, x_\epsilon), \qquad 0 < \epsilon \ll 1,
\end{equation}
one compares the dynamics with an averaged autonomous model
\begin{equation}
\label{eq:averaged-system}
    \dot{\overline x}_\epsilon = \epsilon\overline f(\overline x_\epsilon).
\end{equation}
For a compact subset $B \subset \mathbb{R}^d$, the time-averaged vector field
\[
    \overline f(x) := \lim_{T\to\infty} \frac{1}{T} \int_0^T f(\theta_t\omega, x)\,dt
\]
is assumed to exist uniformly for $(\omega,x) \in \Omega \times B$, which constitutes the uniform Krylov--Bogoliubov--Mitropolsky (UKBM) condition \citep{longo2025nonautonomous}. The rate of averaging convergence is captured by the UKBM gauge
\[
    \widetilde\delta_B(\epsilon) := \sup_{\substack{\omega\in\Omega,\ x\in B\\ R\ge 0,\ 0\le \epsilon R \le 1}} \epsilon \left\| \int_0^R \left[ f(\theta_t\omega, x) - \overline f(x) \right] dt \right\|.
\]
Under the UKBM condition, one has $\lim_{\epsilon\downarrow0}\widetilde\delta_B(\epsilon)=0$. In a hull formulation, taking the supremum over $\omega \in \Omega$ is equivalent to maximizing over the initial time of the averaging interval. For ergodic representation and trajectory approximation, when the base flow $\theta_t$ is uniquely ergodic with unique invariant measure $\nu \in \mathcal{M}_\theta(\Omega)$, the uniform ergodic theorem \citep{cornfeld2012ergodic,walters2000introduction} guarantees that
\[
    \lim_{T\to\infty} \frac{1}{T} \int_0^T q(\theta_t\omega)\,dt = \int_\Omega q\,d\nu
\]
holds uniformly in $\omega \in \Omega$ for every continuous observable $q: \Omega \to \mathbb{R}$. Applying this componentwise to $q_x(\omega) := f(\omega, x)$ yields the spatial integral formula $\overline f(x) = \int_\Omega f(\omega, x)\,d\nu(\omega)$, where uniformity across $x \in B$ follows from a finite-net argument whenever $f(\omega, \cdot)$ is uniformly Lipschitz on $B$. While unique ergodicity implies UKBM, it is not strictly necessary: uniform time averages can exist even if $\Omega$ admits multiple invariant measures, provided they agree on $f(\cdot, x)$. Conversely, if distinct invariant measures yield different spatial averages, no single phase-independent averaged model can represent all forcing realizations.

Classical averaging theory bounds the error between \eqref{eq:slow-nonautonomous-system} and \eqref{eq:averaged-system} on time intervals of length $O(1/\epsilon)$ in terms of $\widetilde\delta_B(\epsilon)$, with typical error estimates of order $O(\sqrt{\widetilde\delta_B(\epsilon)})$, or $O(\widetilde\delta_B(\epsilon))$ under stronger cancellation or smoothness hypotheses \citep{bogoliubov1961asymptotic,verhulst2005methods,sanders2007averaging}. Near an exponentially attracting graph, contraction mechanisms allow one to restart these $O(1/\epsilon)$ tracking estimates repeatedly, extending trajectory approximation to the positive half-line $[0, \infty)$.

For conditional block comparison and global obstructions, despite long-time trajectory control, intensity is inherently a global basin quantity and cannot be inferred from trajectory approximation alone. To evaluate intensity transfer under averaging, consider the sampled discrete-time skew-product maps $F_h$ and $\overline F_h$ for the original flow $\varphi(t,\omega,x)$ and averaged flow $\overline\phi^t(x)$:
\begin{align*}
    F_h(\omega, x) &:= \bigl( \theta_h\omega, \varphi(h,\omega,x) \bigr), \\
    \overline F_h(\omega, x) &:= \bigl( \theta_h\omega, \overline\phi^h(x) \bigr).
\end{align*}
Let $B \subset \mathbb{R}^d$ be a common compact trapping block, and suppose the nonautonomous and averaged systems possess attracting graphs $\mathcal A = \{(\omega, a(\omega)) : \omega \in \Omega\}$ and $\overline{\mathcal A} = \{(\omega, \overline a) : \omega \in \Omega\}$, respectively. We define the map defect $\Delta_h$ and graph defect $\eta$ as
\begin{align}
\label{eq:averaging-map-defect}
    \Delta_h &:= \sup_{\omega\in\Omega, x\in B} \left\| \varphi(h,\omega,x) - \overline\phi^h(x) \right\|, \\
\label{eq:averaging-graph-defect}
    \eta &:= \sup_{\omega\in\Omega} \|a(\omega) - \overline a\|.
\end{align}

\begin{proposition}[Conditional fixed-block comparison]
\label{prop:averaged-block-comparison}
Assume that:
\begin{enumerate}
    \item $\Omega \times B$ is a common compact trapping block;
    \item both graphs $\mathcal A$ and $\overline{\mathcal A}$ lie in $\Omega \times \operatorname{int} B$;
    \item the time-$h$ fiber map $x \mapsto \varphi(h, \omega, x)$ is uniformly $M_h$-Lipschitz on $B$;
    \item the defects $\Delta_h$ and $\eta$ in \eqref{eq:averaging-map-defect}--\eqref{eq:averaging-graph-defect} are finite.
    \item there exists $\rho>0$ such that the reachable sets defining the block intensities remain at distance at least $\rho$ from the basin boundary inside $B$.
\end{enumerate}
Then the corresponding discrete block intensities satisfy
\begin{equation}
\label{eq:averaged-block-intensity-comparison}
    \left| \mu_{h,B}^{\mathrm{na}}(\mathcal A) - \mu_{h,B}^{\mathrm{av}}(\overline{\mathcal A}) \right| \leq \Delta_h + M_h\eta.
\end{equation}
If both intensities are defined relative to the same seed graph, the term $M_h\eta$ is absent.
\end{proposition}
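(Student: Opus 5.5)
The plan is to repeat the argument of Theorem~\ref{thm:numerical-intensity} fiberwise on the skew product $\Omega\times B$, with $\varphi(h,\omega,\cdot)$ in the role of $F_h$ and $\overline\phi^h$ in the role of $\Psi_h$. Both sampled maps move the base by the same $\theta_h$, and only fibers are kicked, so a perturbed orbit of either system has the form $(\theta_h^k\omega_0,x_k)$ with a common base sequence. Block intensities $\mu^{\mathrm{na}}_{h,B}(\mathcal A)$ and $\mu^{\mathrm{av}}_{h,B}(\overline{\mathcal A})$ are understood as in Section~\ref{sec:sampling}: the supremum of $\varepsilon$ such that the closure of the fiber-perturbed reachable set from the seed graph stays in $\Omega\times\operatorname{int}B$, with the margin in hypothesis (v).

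\emph{Step 1 (fixed seed).} I first treat a common seed graph and show $|\mu_{h,B}(F_h;\mathcal S)-\mu_{h,B}(\overline F_h;\mathcal S)|\le\Delta_h$. This is Theorem~\ref{thm:block-intensity-map-stability} applied fiberwise. If $x_{k+1}=\overline\phi^h(x_k)+e_k$ with $\|e_k\|<\varepsilon$ and $x_k\in B$, then
\[
\|x_{k+1}-\varphi(h,\theta_h^k\omega_0,x_k)\|<\varepsilon+\Delta_h .
\]
Hence, up to a first exit from $B$, every $\varepsilon$-orbit of $\overline F_h$ is an $(\varepsilon+\Delta_h)$-orbit of $F_h$. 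If $\varepsilon+\Delta_h<\mu^{\mathrm{na}}_{h,B}$, the latter orbit stays in the closed reachable set, which lies inside $\Omega\times\operatorname{int}B$. An induction on $k$ then rules out a first exit. Swapping the roles of the two maps gives the reverse bound. This settles the final sentence of the proposition.

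\emph{Step 2 (seed change).} Given $\omega$ and a perturbed $\overline F_h$-orbit starting at $(\omega,\overline a)$, I replace the initial point by $(\omega,a(\omega))$ and keep $x_k$ for $k\ge1$. The first-step defect is
\[
\|x_1-\varphi(h,\omega,a(\omega))\|\le \|x_1-\overline\phi^h(\overline a)\|+\|\overline\phi^h(\overline a)-\varphi(h,\omega,\overline a)\|+\|\varphi(h,\omega,\overline a)-\varphi(h,\omega,a(\omega))\|.
\]
The three terms are bounded by $\varepsilon$, $\Delta_h$ and $M_h\eta$ respectively, using hypothesis (iii) and $\overline a,a(\omega)\in\operatorname{int}B$. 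Every later step has defect below $\varepsilon+\Delta_h$. Thus, apart from the seed points themselves, which lie in $\Omega\times\operatorname{int}B$ by (ii), the averaged reachable set is contained in the $(\varepsilon+\Delta_h+M_h\eta)$-reachable set of $F_h$ from $\mathcal A$. The symmetric argument uses the same Lipschitz bound, since $\|\varphi(h,\omega,\cdot)-\overline\phi^h\|\le\Delta_h$ on $B$.

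\emph{Main obstacle.} The hard part is this symmetric direction. It needs a Lipschitz bound for the \emph{averaged} map between the two seeds, or at least for whichever map is evaluated at the swapped seed. I would write the first step of the reverse chain so that only $\varphi(h,\omega,\cdot)$ is compared at two points, paying $\Delta_h$ to pass between $\varphi$ and $\overline\phi^h$ at the same point. If that ordering fails, I would invoke (iii) for $\overline\phi^h$ via $M_h=e^{Lh}$.

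\emph{Closures and margin.} The second delicate point is passing from reachable sets to their closures while keeping them inside $\operatorname{int}B$. Here I would use the margin $\rho$ from hypothesis (v) and a strict slack $\varepsilon'<\varepsilon$, as in Proposition~\ref{prop:discrete-first-contact}. Letting the slack tend to zero then yields \eqref{eq:averaged-block-intensity-comparison}.
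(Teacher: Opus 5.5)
Your proposal is correct and is the paper's own argument. Over the common base, a fiberwise $\varepsilon$-step of one skew-product map is an $(\varepsilon+\Delta_h)$-step of the other; swapping the seed costs $M_h\eta+\Delta_h$ on the first step only; and the stability argument of Theorem~\ref{thm:block-intensity-map-stability} finishes, exactly as in Theorem~\ref{thm:numerical-intensity}. The obstacle you flag does not arise: the reverse chain $\|x_1-\overline\phi^h(\overline a)\|\le\|x_1-\varphi(h,\omega,a(\omega))\|+\|\varphi(h,\omega,a(\omega))-\varphi(h,\omega,\overline a)\|+\|\varphi(h,\omega,\overline a)-\overline\phi^h(\overline a)\|$ uses only hypothesis (iii), and since the closure of a finite union is the union of the closures, the closure step needs neither the slack $\varepsilon'<\varepsilon$ nor the margin $\rho$.
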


\begin{proof}
Both skew-product maps have the same base component. A fiberwise $\varepsilon$-perturbed step of one map is a fiberwise $(\varepsilon+\Delta_h)$-perturbed step of the other. If the initial graphs differ, choose corresponding graph points with fiber distance at most $\eta$. Their first images differ by at most $M_h\eta+\Delta_h$. After the first step, the two perturbed sequences can be identified. The block-intensity stability argument of Theorem~\ref{thm:block-intensity-map-stability} then gives \eqref{eq:averaged-block-intensity-comparison}.
\end{proof}

Proposition~\ref{prop:averaged-block-comparison} underscores that transferring normalized intensity requires controlling the ratio $(\Delta_h + M_h\eta)/h$ as well as identifying a shared trapping block that realizes the relevant escape threshold. Without global trapping and controlled reachability, trajectory agreement near an attractor fails to constrain basin intensity, as demonstrated by the following construction.

\begin{example}[Local agreement does not determine intensity]
\label{ex:local-agreement-global-intensity}
For every $M > 1$, one can construct two smooth forward-complete scalar vector fields $f_1$ and $f_M$ such that $f_1(x) = f_M(x) = -x$ for $|x| \leq 1$, and $0$ is a hyperbolic attracting equilibrium for both systems. Outside this common neighborhood, choose smooth extensions with right basin boundaries $b_1, b_M > 1$ such that $\max_{0 \leq x \leq b_1} [-f_1(x)] = 1$,   $\max_{0 \leq x \leq b_M} [-f_M(x)] = M$. The scalar escape characterization then gives $\mu_{f_1}(\{0\}) = 1$,  $\mu_{f_M}(\{0\}) = M$. Thus the systems have identical trajectories and identical linearization in a full neighborhood of the attractor, but different basin intensities.
\end{example}

Example~\ref{ex:local-agreement-global-intensity} proves the following warning:
\begin{quote}
Uniform long-time approximation near an attracting graph does not, by itself, preserve the basin boundary or intensity.
\end{quote}
Averaging may reproduce local response while losing remote basin geometry, transient barriers, or alternative attractors. A global intensity comparison therefore requires global trapping or basin hypotheses in addition to trajectory approximation.

To systematically compare the original nonautonomous system with such approximations---whether derived from averaging, perturbations, or numerical models---we can employ explicit response diagnostics. As we have established, these diagnostics measure local displacement rather than global basin escape and should not be conflated with intensity. Let $\varphi$ and $\widetilde\varphi$ be two cocycles over the same base flow. Starting both systems from $(\omega,x_0)$, define
\[
    R_1(t;\omega,x_0)
    :=
    \widetilde\varphi(t,\omega,x_0)
    -
    \varphi(t,\omega,x_0)
\]
and
\[
    R_2(T;\omega,x_0)
    :=
    \frac1T
    \int_0^T
    \|R_1(t;\omega,x_0)\|\,dt,
    \qquad T>0.
\]
For scalar systems, $R_1$ may be retained as a signed response; $R_2$ uses its absolute value.

Suppose the two cocycles possess attracting invariant graphs $a$ and $\widetilde a$ and that, on a common attracting neighborhood, 
\begin{align}
\label{eq:first-graph-exponential-tracking}
    \left\|
    \varphi(t,\omega,x_0)-a(\theta_t\omega)
    \right\|
    &\leq
    C e^{-\lambda t},\\
\label{eq:second-graph-exponential-tracking}
    \left\|
    \widetilde\varphi(t,\omega,x_0)
    -
    \widetilde a(\theta_t\omega)
    \right\|
    &\leq
    C e^{-\lambda t}.
\end{align}

\begin{proposition}[Asymptotic response of attracting graphs]
\label{prop:response-asymptotics}
Under
\eqref{eq:first-graph-exponential-tracking}--%
\eqref{eq:second-graph-exponential-tracking},
\begin{equation}
\label{eq:R1-graph-asymptotic}
\begin{split}
    \left\|
    R_1(t;\omega,x_0)
    -
    \left[
    \widetilde a(\theta_t\omega)
    -
    a(\theta_t\omega)
    \right]
    \right\|
    \leq
    2Ce^{-\lambda t}.
\end{split}
\end{equation}
If
    $q(\omega)
    :=
    \|\widetilde a(\omega)-a(\omega)\|$,
then
\[
    \left|
    R_2(T;\omega,x_0)
    -
    \frac1T
    \int_0^T q(\theta_t\omega)\,dt
    \right|
    \leq
    \frac{2C}{\lambda T}.
\]
\end{proposition}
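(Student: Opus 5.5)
The plan is to add and subtract the graph values and then use the triangle inequality. For fixed $(\omega,x_0)$ I write
\[
    R_1(t;\omega,x_0)-\bigl[\widetilde a(\theta_t\omega)-a(\theta_t\omega)\bigr]
    =
    \bigl[\widetilde\varphi(t,\omega,x_0)-\widetilde a(\theta_t\omega)\bigr]
    -
    \bigl[\varphi(t,\omega,x_0)-a(\theta_t\omega)\bigr].
\]
The first bracket is bounded by \eqref{eq:second-graph-exponential-tracking} and the second by \eqref{eq:first-graph-exponential-tracking}, each by $Ce^{-\lambda t}$. The triangle inequality then gives \eqref{eq:R1-graph-asymptotic} directly. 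Nothing else is needed, because both cocycles share the base flow and hence the same phase $\theta_t\omega$ appears in both graph terms.

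For the averaged statement, I use the reverse triangle inequality. Applied to \eqref{eq:R1-graph-asymptotic}, it gives
\[
    \bigl|\|R_1(t;\omega,x_0)\|-q(\theta_t\omega)\bigr|\le 2Ce^{-\lambda t}
\]
for every $t\ge0$. Next I integrate over $[0,T]$ and divide by $T$. Moving the absolute value outside the integral only decreases the left side, so
\[
    \Bigl|R_2(T;\omega,x_0)-\frac1T\int_0^Tq(\theta_t\omega)\,dt\Bigr|
    \le\frac{2C}{T}\int_0^Te^{-\lambda t}\,dt
    =\frac{2C}{\lambda T}\bigl(1-e^{-\lambda T}\bigr)\le\frac{2C}{\lambda T}.
\]

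The only point needing care is measurability and integrability of $t\mapsto q(\theta_t\omega)$ and of $t\mapsto\|R_1\|$. Both are continuous in $t$: the graphs are continuous, the base flow is continuous, and the solutions are continuous in time. I also have to read the constant $C$ in the tracking hypotheses as absorbing the dependence on $x_0$ within the common attracting neighborhood, which is how the statement is phrased. There is no real obstacle beyond these points.
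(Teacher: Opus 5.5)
Your proof is correct and follows the paper's argument: add and subtract the graph values and apply the triangle inequality for the pointwise bound, then use the reverse triangle inequality, integrate over $[0,T]$, and divide by $T$. Your explicit factor $(1-e^{-\lambda T})\le 1$ and your remark on continuity of the integrands only spell out steps the paper leaves implicit.
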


\begin{proof}
Equation \eqref{eq:R1-graph-asymptotic} follows by adding and subtracting the two invariant-graph values and applying the triangle inequality. The reverse triangle inequality gives
    $\left|
    \|R_1(t;\omega,x_0)\|
    -
    q(\theta_t\omega)
    \right|
    \leq
    2Ce^{-\lambda t}$.
Integrating and dividing by $T$ yields
    $\frac1T
    \int_0^T2Ce^{-\lambda t}\,dt
    \leq
    \frac{2C}{\lambda T}$.
\end{proof}

If the base is uniquely ergodic with invariant measure $\nu$, then $q$ is continuous and the uniform ergodic theorem gives $\lim_{T\to\infty} R_2(T;\omega,x_0) = \int_\Omega \|\widetilde a(\omega)-a(\omega)\|\,d\nu(\omega)$, uniformly in the initial phase and, under uniform attraction, uniformly for $x_0$ in a common compact attracting neighborhood.

When the base has several invariant measures, a single limiting response need not exist. Define
\begin{equation*}
    \underline R
    :=
    \min_{\nu\in\mathcal M_\theta(\Omega)}
    \int_\Omega q\,d\nu,\qquad
    \overline R
    :=
    \max_{\nu\in\mathcal M_\theta(\Omega)}
    \int_\Omega q\,d\nu.
\end{equation*}
The extrema exist because $\mathcal M_\theta(\Omega)$ is weakly compact and $q$ is continuous.

\begin{proposition}[Response interval for a nonuniquely ergodic base]
\label{prop:response-invariant-measure-interval}
For every $\omega\in\Omega$ and every $x_0$ satisfying the tracking bounds
\eqref{eq:first-graph-exponential-tracking}--\eqref{eq:second-graph-exponential-tracking},
    $\underline R
    \leq
    \liminf_{T\to\infty}
    R_2(T;\omega,x_0)
    \leq
    \limsup_{T\to\infty}
    R_2(T;\omega,x_0)
    \leq
    \overline R$.
If $\omega$ is generic for an invariant measure $\nu$, then
\begin{equation}
\label{eq:R2-generic-measure-limit}
    \lim_{T\to\infty}
    R_2(T;\omega,x_0)
    =
    \int_\Omega q\,d\nu.
\end{equation}
\end{proposition}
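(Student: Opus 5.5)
The plan is to reduce $R_2$ to ergodic averages of the continuous observable $q$, using the estimate of Proposition~\ref{prop:response-asymptotics}, and then apply the standard characterization of limit points of empirical measures on a compact metric flow.

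By Proposition~\ref{prop:response-asymptotics},
\[
\Bigl|R_2(T;\omega,x_0)-\frac1T\int_0^T q(\theta_t\omega)\,dt\Bigr|\le\frac{2C}{\lambda T}\longrightarrow0 .
\]
So $R_2$ and the time averages $Q_T(\omega):=\frac1T\int_0^T q(\theta_t\omega)\,dt$ have the same $\liminf$ and $\limsup$, and it suffices to prove both assertions for $Q_T$.

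\textbf{Step 1 (continuity of $q$).} The function $q$ is continuous because $a$ and $\widetilde a$ are continuous invariant graphs.

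\textbf{Step 2 (empirical measures).} Define
\[
\nu_T:=\frac1T\int_0^T\delta_{\theta_t\omega}\,dt ,
\]
so that $Q_T(\omega)=\int q\,d\nu_T$. Take a sequence $T_j\to\infty$ realizing the $\limsup$ (respectively the $\liminf$) of $Q_T$. Since the space of probability measures on the compact space $\Omega$ is weak$^*$ compact, we may pass to a subsequence with $\nu_{T_j}\to\nu^*$. Continuity of $q$ then gives $\int q\,d\nu_{T_j}\to\int q\,d\nu^*$.

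\textbf{Step 3 (invariance of the limit).} We check that $\nu^*\in\mathcal M_\theta(\Omega)$. This is the Krylov--Bogoliubov argument. Fix $s$ and a continuous test function $g$. Then
\[
\int g\circ\theta_s\,d\nu_T-\int g\,d\nu_T=\frac1T\Bigl(\int_T^{T+s}-\int_0^s\Bigr)g(\theta_t\omega)\,dt ,
\]
which is bounded by $2|s|\,\|g\|_\infty/T\to0$. Because $\theta_s$ is continuous, $g\circ\theta_s$ is continuous, so the identity passes to the weak$^*$ limit and yields $\theta_s$-invariance of $\nu^*$.

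Consequently the $\limsup$ equals $\int q\,d\nu^*\le\overline R$, and similarly the $\liminf$ is at least $\underline R$. The middle inequality $\liminf\le\limsup$ is trivial.

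\textbf{Step 4 (generic points).} If $\omega$ is generic for $\nu$, then by definition $\nu_T\to\nu$ weak$^*$ along the full limit $T\to\infty$. Hence $Q_T(\omega)\to\int q\,d\nu$, and the estimate of Proposition~\ref{prop:response-asymptotics} transfers this limit to $R_2$, giving \eqref{eq:R2-generic-measure-limit}.

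The only point needing care is Step 3, the invariance of weak$^*$ limit points of the continuous-time empirical measures. It is handled by the boundary-term estimate above, using that each $\theta_s$ is a homeomorphism. The existence of the extrema $\underline R$ and $\overline R$ has already been noted in the paper.
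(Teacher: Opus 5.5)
Your proof is correct and follows the same route as the paper. You pass to the empirical measures $\nu_T=\frac1T\int_0^T\delta_{\theta_t\omega}\,dt$, use weak$^*$ compactness and the invariance of their limit points to trap every subsequential limit of $\frac1T\int_0^T q(\theta_t\omega)\,dt$ in $[\underline R,\overline R]$, and transfer this to $R_2$ via Proposition~\ref{prop:response-asymptotics}; the only difference is that you write out the Krylov--Bogoliubov boundary-term estimate for invariance, which the paper simply asserts.
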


\begin{proof}
Consider the empirical measures $\nu_T^\omega := \frac1T \int_0^T \delta_{\theta_t\omega}\,dt$.
Every weak limit point of $\{\nu_T^\omega\}_{T>0}$ is $\theta$-invariant. Therefore every subsequential limit of $\frac1T\int_0^Tq(\theta_t\omega)\,dt$ lies between the minimum and maximum of $\int q\,d\nu$ over invariant measures. Proposition~\ref{prop:response-asymptotics} shows that $R_2$ has the same subsequential limits. If $\omega$ is generic for $\nu$, the empirical measures converge to $\nu$, proving \eqref{eq:R2-generic-measure-limit}.
\end{proof}

Accordingly, numerical studies with a uniquely ergodic forcing base may report one phase-independent asymptotic response. With several invariant measures, the mathematically meaningful summary is the interval $[\underline R,\overline R]$, possibly supplemented by measure-specific or phase-specific response values. By contrast, the worst-case intensity $\mu_{\mathrm{na}}(\mathcal A)$ remains well defined without selecting an invariant probability measure, because it is uniform over the entire base.

Ultimately, together with the sampling result established above, this section extends the robustness framework from autonomous attractors to nonautonomous invariant graphs while preserving the same quantitative bounds. They formally separate three notions that are often conflated: hyperbolic continuation is local, response functions measure displacement, and intensity measures global controlled basin escape. Building on this, Section~\ref{sec:noise-boundary} next interprets sampled intensity as a bounded-noise escape threshold and connects the reachable-set geometry with deterministic boundary dynamics.

\section{Bounded-noise escape and boundary dynamics}
\label{sec:noise-boundary}

We now interpret intensity as a sharp worst-case escape threshold for bounded random perturbations. We then relate the boundary map associated with the sampled set-valued dynamics to the boundary differential equation
arising from the Pontryagin maximum principle. Throughout this section, $C\subset\mathbb R^d$ denotes a compact convex set satisfying $0\in\operatorname{int}C$. Its Minkowski functional is $\gamma_C(z):=\inf\{\lambda>0:z\in\lambda C\}$. When $C=\overline B_1(0)$, one has $\gamma_C(z)=\|z\|$, and the subscript $C$ will be omitted.

\subsection{Bounded random perturbations}
\label{subsec:bounded-random-perturbations}

Let $F_h=\phi^h$ be the time-$h$ map of the autonomous system $\dot x=f(x)$, and consider the randomly perturbed recurrence 
\begin{equation}
    X_{k+1}=F_h(X_k)+\xi_k,
    \qquad
    \operatorname{supp}\xi_k\subseteq hrC .
    \label{eq:bounded-random-recurrence}
\end{equation}
No independence or stationarity assumption is initially imposed. Indeed, the first question is purely deterministic: what can happen under any sequence of kicks compatible with the support constraint? To answer this, we utilize the exact-time controlled reachable sets introduced earlier. For $\rho>0$, $$
\mathcal P^d_{\rho,C}(A) := \left\{
        x_N:
        \begin{array}{l}
        N\geq0,\ x_0\in A,\\
        x_{k+1}=F_h(x_k)+e_k,\quad
        \gamma_C(e_k)<\rho
        \end{array}
    \right\}.
$$
The corresponding discrete $C$-intensity is
$$
    \mu_{h,C}(A)
    :=
    \sup\left\{
        \rho>0:
        \overline{\mathcal P^d_{\rho,C}(A)}
        \subset K\Subset\mathcal D(A)
        \text{ for some compact }K
    \right\}.
$$
Here $\mathcal D(A)$ denotes, as throughout, the basin of $A$ for the flow; by Lemma~\ref{lem:basin-identity} it coincides with the basin of $A$ for the sampled map $F_h$. This definition uses compact containment rather than mere inclusion in $\mathcal D(A)$. Consequently, intensities below the threshold provide a uniform safety margin from the basin boundary.

\begin{theorem}[Worst-case bounded-noise threshold]
\label{thm:bounded-noise-threshold}
Let $A$ be a compact attractor of $F_h$, and suppose $\mu_{h,C}(A)>0$.
\begin{enumerate}
    \item If $hr<\mu_{h,C}(A)$, then there exists a compact set $K\Subset\mathcal D(A)$, independent of the realization of $(\xi_k)$, such that every trajectory starting in $A$ remains in $K$. In particular, escape is impossible for every admissible noise realization.
    \item Suppose, in addition, that loss of compact containment occurs only through finite-time exit from the basin, namely that
    \begin{equation}
        \mu_{h,C}(A)
        =
        \inf\left\{
            \rho>0:
            \begin{array}{l}
            \text{there exist }N\geq1,\ x_0\in A,\text{ and }
            e_0,\ldots,e_{N-1}\in\rho C\\
            \text{such that }x_{k+1}=F_h(x_k)+e_k
            \text{ and }x_N\notin\mathcal D(A)
            \end{array}
        \right\}
        \label{eq:finite-exit-characterization}
    \end{equation}
    holds. If $hr>\mu_{h,C}(A)$, then there exists a finite admissible kick sequence $e_k\in hrC$ whose trajectory leaves $\mathcal D(A)$.
\end{enumerate}
\end{theorem}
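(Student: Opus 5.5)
Part (i) follows directly from the definition of $\mu_{h,C}(A)$ together with a support-to-gauge translation. Fix $hr<\mu_{h,C}(A)$ and choose $\rho$ with $hr<\rho<\mu_{h,C}(A)$. By the definition of the supremum and monotonicity of $\mathcal P^d_{\rho,C}(A)$ in $\rho$, there is a compact $K\Subset\mathcal D(A)$ with $\overline{\mathcal P^d_{\rho,C}(A)}\subset K$. This $K$ depends only on $\rho$, not on any noise realization.

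The remaining task for (i) is to show that every realization of \eqref{eq:bounded-random-recurrence} started in $A$ is a $\rho$-perturbed orbit in the gauge sense. Since $\operatorname{supp}\xi_k\subseteq hrC$, almost surely (or, reading the support condition pathwise, for every admissible realization) $\xi_k\in hrC$. Hence $\gamma_C(\xi_k)\le hr<\rho$. This is exactly why we pass to the slightly larger $\rho$: the strict inequality in the definition of $\mathcal P^d_{\rho,C}$ absorbs the closed body $hrC$, on whose boundary the gauge equals $hr$.

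By induction each $X_N$ then lies in $\mathcal P^d_{\rho,C}(A)\subset K$. Since $K\Subset\mathcal D(A)$ and $\mathcal D(A)$ is the basin for $F_h$ by Lemma~\ref{lem:basin-identity}, escape is impossible. If the support statement only holds almost surely, the conclusion holds on the full-measure event where all kicks lie in $hrC$; admissible realizations are by definition those with $\xi_k\in hrC$ for all $k$.

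For part (ii), fix $hr>\mu_{h,C}(A)$. The hypothesis \eqref{eq:finite-exit-characterization} identifies $\mu_{h,C}(A)$ with an infimum over $\rho$ of finite exit sequences with kicks in $\rho C$. By the definition of the infimum, there is $\rho<hr$ (with $\rho>\mu_{h,C}(A)$, or equal to it if the infimum is attained) admitting $N\ge1$, $x_0\in A$ and $e_0,\dots,e_{N-1}\in\rho C$ with $x_N\notin\mathcal D(A)$. Since $C$ is convex and $0\in C$, we have $\rho C\subset hrC$ whenever $\rho\le hr$. So the same sequence is an admissible kick sequence $e_k\in hrC$ for \eqref{eq:bounded-random-recurrence}, and its trajectory leaves $\mathcal D(A)$. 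Upward closedness of the defining set, the analogue of Step~1 of Proposition~\ref{prop:discrete-first-contact}, is implicitly what we use here, and it follows from this nesting.

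The main subtlety is not computational. It lies in the strict-versus-closed conventions: the definition of $\mathcal P^d_{\rho,C}$ uses $\gamma_C<\rho$, the noise uses the closed body $hrC$, and \eqref{eq:finite-exit-characterization} uses closed bodies $\rho C$. Choosing an intermediate $\rho$ strictly between $hr$ and $\mu_{h,C}(A)$ in each part resolves all three mismatches. I would also remark that hypothesis \eqref{eq:finite-exit-characterization} is genuinely needed, since otherwise loss of compact containment can occur by escape to infinity or by accumulation at $\partial\mathcal D(A)$ without a finite exit (compare Remark~\ref{rem:escape-infinity} and Proposition~\ref{prop:discrete-first-contact}, which supplies \eqref{eq:finite-exit-characterization} under \eqref{eq:discrete-no-escape-infinity} via Remark~\ref{rem:discrete-first-contact-variants}).
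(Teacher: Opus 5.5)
Your proof is correct and follows the paper's argument essentially verbatim. For (i), an intermediate amplitude $hr<\rho<\mu_{h,C}(A)$ absorbs the closed body $hrC$ into the strict gauge constraint. For (ii), an element of the finite-exit set lying below $hr$, together with $\rho C\subset hrC$, produces the escaping word. You are in fact slightly more explicit than the paper about monotonicity in $\rho$ and about the almost-sure reading of the support condition.

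One minor correction to your closing aside: boundary accumulation cannot by itself break \eqref{eq:finite-exit-characterization}. The last-kick adjustment in the proof of Proposition~\ref{prop:discrete-first-contact} turns it into a genuine finite exit at every slightly larger amplitude. So the hypothesis only excludes loss of compact containment through unbounded perturbed orbits that never leave the basin.
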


\begin{proof}
If $hr<\mu_{h,C}(A)$, choose $hr<\rho<\mu_{h,C}(A)$. Every sequence satisfying $e_k\in hrC$ also satisfies $\gamma_C(e_k)<\rho$. By the definition of intensity, there is a compact set $K\Subset\mathcal D(A)$ containing $\overline{\mathcal P^d_{\rho,C}(A)}$. Hence every admissible sample path starting in $A$ remains in $K$. For the second assertion, choose $\mu_{h,C}(A)<\rho<hr$. By the definition of the infimum in \eqref{eq:finite-exit-characterization}, there exists $\rho'\in(\mu_{h,C}(A),\rho)$ and a finite escaping sequence with $\gamma_C(e_k)<\rho'<hr$. Hence each kick belongs to $hrC$.
\end{proof}

Condition \eqref{eq:finite-exit-characterization} rules out loss of compact containment caused solely by an unbounded perturbed orbit that never leaves the basin. It is precisely the conclusion \eqref{eq:discrete-first-contact-identity} of Proposition~\ref{prop:discrete-first-contact}, in the gauge form of Remark~\ref{rem:discrete-first-contact-variants}(ii), applied to $F=F_h$; it therefore holds whenever the perturbed reachable sets remain in a common compact set at every amplitude below the threshold, as in the setting of the sampling theorem. By Remark~\ref{rem:discrete-first-contact-variants}(i), it is immaterial
whether the kicks in \eqref{eq:finite-exit-characterization} are constrained to $\rho C$ or to its interior.

\begin{definition}[Worst-case escape threshold]
\label{def:escape-threshold}
The escape threshold expressed per unit physical time is $r_{\mathrm{esc},C}(h) := \frac{\mu_{h,C}(A)}{h}$. For Euclidean disturbances we write $r_{\mathrm{esc}}(h)=r_{\mathrm{esc},\overline B_1}(h)$.
\end{definition}

Theorem~\ref{thm:bounded-noise-threshold} identifies $r_{\mathrm{esc},C}(h)$ with the quantity announced in the
introduction: it is the supremum of the disturbance rates $r$ for which every admissible realization of
\eqref{eq:bounded-random-recurrence} starting in $A$ remains in a common compact subset of $\mathcal D(A)$. Safety holds for every $r<r_{\mathrm{esc},C}(h)$ by part~(i), while part~(ii) provides an admissible finite escape sequence for every $r>r_{\mathrm{esc},C}(h)$; the supremum is therefore $\mu_{h,C}(A)/h$ regardless of what happens at the threshold itself, which is the identity of Corollary~\ref{cor:bounded-noise}.

\begin{corollary}[Bounded-noise escape threshold under sampling]
\label{cor:bounded-noise}
For Euclidean disturbances, $C=\overline B_1(0)$, under the hypotheses of Theorem~\ref{thm:sampling},
$$
    \frac{\mu(A)}{1+Lh}
    \leq
    r_{\mathrm{esc}}(h)
    \leq
    \mu(A)\frac{e^{Lh}-1}{Lh}.
$$
It follows that $\lim_{h\downarrow0}r_{\mathrm{esc}}(h)=\mu(A)$, $r_{\mathrm{esc}}(h)=\mu(A)+O(h)$.
\end{corollary}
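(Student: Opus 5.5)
The corollary follows from Theorem~\ref{thm:sampling} once we check that the Euclidean bounded-noise threshold is exactly the normalized discrete intensity. The plan has three steps.

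First, specialize to $C=\overline B_1(0)$. Then $\gamma_C(e)=\|e\|$, so the reachable set $\mathcal P^d_{\rho,C}(A)$ of Section~\ref{subsec:bounded-random-perturbations} is literally $\mathcal P^{F_h}_\rho(A)$ from Section~\ref{sec:intensity}. Both intensities use compact containment in $\mathcal D(A)$, and Lemma~\ref{lem:basin-identity} gives $\mathcal D(A)=\mathcal D_{F_h}(A)$. Hence $\mu_{h,\overline B_1}(A)=\mu_h(A)$, and by Definition~\ref{def:escape-threshold} we have $r_{\mathrm{esc}}(h)=\mu_h(A)/h$.

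The only small point is the range of $\rho$. The bounded-noise definition takes the supremum over $\rho>0$, while Definition~\ref{def:discrete-intensity} allows $\varepsilon\ge0$. These suprema coincide whenever the discrete intensity is positive, and positivity follows from the lower bound $\mu_h(A)\ge h\mu(A)/(1+Lh)>0$ together with \ref{hyp:positive-finite-intensity}.

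Second, divide \eqref{eq:sampling-main-bounds} by $h$. This gives the displayed two-sided bound for $L>0$. For $L=0$ it gives equality $r_{\mathrm{esc}}(h)=\mu(A)$, read by continuity.

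Third, the limit follows from \eqref{eq:sampling-main-limit}. The rate $r_{\mathrm{esc}}(h)=\mu(A)+O(h)$ follows from Corollary~\ref{cor:sampling-first-order}, equivalently from the expansions \eqref{eq:lower-factor-expansion}--\eqref{eq:upper-factor-expansion}.

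I expect no real obstacle, since the work is in the identification step. One might also want the threshold interpretation of Theorem~\ref{thm:bounded-noise-threshold}(ii), which needs the finite-exit condition \eqref{eq:finite-exit-characterization}. That condition is not required for the numerical identity $r_{\mathrm{esc}}(h)=\mu_h(A)/h$, which is a definition. If one wants it anyway, it would come from Proposition~\ref{prop:discrete-first-contact}, provided there is a common compact set containing the subcritical reachable sets.
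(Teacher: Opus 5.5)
Your proposal is correct and takes essentially the paper's own route. With $C=\overline B_1(0)$ one has $\gamma_C=\|\cdot\|$, and Lemma~\ref{lem:basin-identity} identifies the basins, so $\mu_{h,C}(A)=\mu_h(A)$ and hence $r_{\mathrm{esc}}(h)=\mu_h(A)/h$; the two-sided bounds then follow by dividing \eqref{eq:sampling-main-bounds} by $h$, and the limit and rate follow from \eqref{eq:sampling-main-limit} and Corollary~\ref{cor:sampling-first-order}. Your remark is also consistent with the paper's discussion preceding the corollary: the finite-exit condition is needed only for the escape interpretation in Theorem~\ref{thm:bounded-noise-threshold}(ii), not for the identity itself.
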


Thus the continuous-time intensity is precisely the small-sampling-time limit of the largest bounded disturbance rate for which pathwise safety is guaranteed.

Having established the condition for absolute safety, we now turn to the scenario where the threshold is exceeded. The two assertions in Theorem~\ref{thm:bounded-noise-threshold} are geometric, worst-case statements. They do not, by themselves, determine the probability of escape. In particular, the existence of an admissible escaping sequence does not imply that the random variables $\xi_k$ can produce that sequence with positive probability. The support of the law may be a proper subset of $hrC$, or the law may assign zero probability to the neighborhoods required to shadow a particular escape word.

The following statement separates accessibility from almost-sure escape.

\begin{proposition}[Positive-probability and almost-sure escape]
\label{prop:probabilistic-escape}
Assume $hr>\mu_{h,C}(A)$, and suppose there is a robust escaping word
    $e^\ast=(e_0^\ast,\ldots,e_{N-1}^\ast)
    \in(\operatorname{int}(hrC))^N$
such that the associated endpoint belongs to
$\operatorname{int}(\mathbb R^d\setminus\mathcal D(A))$.
\begin{enumerate}
    \item If the kicks are independent and each law has support equal to $hrC$, then escape in at most $N$ steps has positive probability.
    \item Let $\tau_{\mathcal D} := \inf\{k\geq0:X_k\notin\mathcal D(A)\}$. Suppose the process is Markov and that there exist $N\geq1$ and $p>0$ such that
    \begin{equation}
        \inf_{x\in\mathcal R}
        \mathbb P_x\{\tau_{\mathcal D}\leq N\}
        \geq p,
        \label{eq:uniform-exit-minorization}
    \end{equation}
    where $\mathcal R\subset\mathbb R^d$ is a Borel set containing $A$ such that, for every $x\in\mathcal R$, one has $X_k\in\mathcal R$ for all $k<\tau_{\mathcal D}$, $\mathbb P_x$-almost surely; one may take for $\mathcal R$ the closure of the set of states reachable from $A$ by admissible kicks before exit. (Since time is discrete,
    $\tau_{\mathcal D}$ is a stopping time.) Then
        $\mathbb P_x\{\tau_{\mathcal D}>mN\}
        \leq(1-p)^m$,
         $m\geq0$,
    for every $x\in\mathcal R$. Consequently,
\[
\mathbb P_x\{\tau_{\mathcal D}<\infty\}=1,
\qquad
\mathbb E_x[\tau_{\mathcal D}]
\leq
N\sum_{m=0}^{\infty}(1-p)^m
=
\frac{N}{p}.
\]
\end{enumerate}
\end{proposition}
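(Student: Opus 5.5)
For part (i), the plan is a robustness-and-continuity argument around the escaping word $e^\ast$. Write $\Gamma_N:(\mathbb R^d)^N\to\mathbb R^d$ for the map sending a kick word $(e_0,\ldots,e_{N-1})$ to the endpoint $x_N$ of the recurrence started at the initial point $x_0\in A$ of the escaping orbit. It is a composition of the continuous map $F_h$ with translations, so $\Gamma_N$ is continuous.

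Since $\Gamma_N(e^\ast)$ lies in the open set $\operatorname{int}(\mathbb R^d\setminus\mathcal D(A))$, there is $\delta>0$ with two properties. First, $B_\delta(e_k^\ast)\subset\operatorname{int}(hrC)$ for each $k$. Second, every word with $\|e_k-e_k^\ast\|<\delta$ for all $k$ has endpoint outside $\mathcal D(A)$. This $\delta$ exists because finitely many open conditions can be met simultaneously.

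Each $e_k^\ast$ belongs to $\operatorname{supp}\xi_k=hrC$, so $\mathbb P\{\|\xi_k-e_k^\ast\|<\delta\}>0$. By independence, the probability that all $N$ kicks fall in these balls is the product, which is positive. On that event $X_N\notin\mathcal D(A)$, hence $\tau_{\mathcal D}\le N$.

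If the escape is only visited earlier, that still gives $\tau_{\mathcal D}\le N$. Positive invariance of $\mathcal D(A)$ is not needed here, since $\tau_{\mathcal D}$ is a first-exit time. One subtlety is that $X_0$ must be the specific $x_0$. I would state the result for $X_0=x_0$, or note that if $X_0$ is random with $x_0$ in its support, the same continuity argument applied jointly in $(x_0,e)$ works.

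For part (ii), the plan is the standard geometric-trials estimate via the Markov property at times $mN$. I will prove $\mathbb P_x\{\tau_{\mathcal D}>(m+1)N\}\le(1-p)\,\mathbb P_x\{\tau_{\mathcal D}>mN\}$ for $x\in\mathcal R$ and then induct from $m=0$. On the event $\{\tau_{\mathcal D}>mN\}$, which is $\mathcal F_{mN}$-measurable, the invariance hypothesis on $\mathcal R$ gives $X_{mN}\in\mathcal R$ almost surely. Conditioning on $\mathcal F_{mN}$ and applying the Markov property, the probability of surviving $N$ further steps is $\mathbb P_{X_{mN}}\{\tau_{\mathcal D}>N\}\le 1-p$ by \eqref{eq:uniform-exit-minorization}. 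Taking expectations over the survival event closes the induction.

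Almost-sure escape then follows by letting $m\to\infty$. For the expectation bound I use $\mathbb E_x[\tau_{\mathcal D}]=\sum_{k\ge0}\mathbb P_x\{\tau_{\mathcal D}>k\}$ and group the terms into blocks of length $N$. Each block contributes at most $N\,\mathbb P_x\{\tau_{\mathcal D}>mN\}\le N(1-p)^m$, and summing gives $N/p$.

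The main obstacle is the measurability and justification behind the choice of $\mathcal R$. The hypothesis is stated directly in the proposition, so I would simply use it. For the suggested choice of $\mathcal R$ as the closure of the pre-exit reachable set, I would remark briefly why it holds. Before exit, each $X_{k+1}=F_h(X_k)+\xi_k$ with $\xi_k\in hrC$ almost surely. Because $\operatorname{supp}\xi_k\subseteq hrC$, the admissible-kick set can be taken with the closed body $hrC$, and reachability is preserved step by step. The closure is Borel.

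A minor point is that the Markov property requires the kicks to be conditionally independent of the past given the state. That is part of the assumption that the process is Markov, so the conditioning step needs nothing further.
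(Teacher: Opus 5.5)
Your proposal is correct and takes essentially the same route as the paper. In (i), continuity of the finite-kick endpoint map gives a product neighborhood of $e^\ast$ consisting of escaping words, and full support plus independence makes its probability positive; in (ii), the Markov property at the times $mN$ together with the minorization hypothesis gives $\mathbb P_x\{\tau_{\mathcal D}>(m+1)N\}\le(1-p)\,\mathbb P_x\{\tau_{\mathcal D}>mN\}$, from which almost-sure escape and the bound $N/p$ follow. Your extra remarks (fixing $X_0=x_0$, grouping the tail sum into blocks of length $N$, and using the closed body $hrC$ to justify the suggested choice of $\mathcal R$) spell out details that the paper's terse proof leaves implicit.
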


\begin{proof}
Continuous dependence of a finite trajectory on its kicks, together with
the robust terminal crossing, gives open neighborhoods
$U_k\ni e_k^\ast$ such that every kick sequence in
$U_0\times\cdots\times U_{N-1}$ escapes. Full support gives
$\mathbb P\{\xi_k\in U_k\}>0$; independence therefore yields
    $\mathbb P\{(\xi_0,\ldots,\xi_{N-1})
    \in U_0\times\cdots\times U_{N-1}\}>0$.
For the second statement, the Markov property and
\eqref{eq:uniform-exit-minorization} imply inductively that
$$
    \mathbb P_x\{\tau_{\mathcal D}>(m+1)N\}
    \leq
    (1-p)\mathbb P_x\{\tau_{\mathcal D}>mN\}.
$$
The geometric estimate follows. Letting $m\to\infty$ proves almost-sure
escape, while summing the tail bound gives the stated expectation estimate.
\end{proof}

Full support and independence are therefore sufficient for a given robust
finite escape word to occur with positive probability, but they are not, in
general, sufficient for escape with probability one. Almost-sure escape
requires a recurrent opportunity to realize an escape word, such as the
uniform minorization condition above, a Harris-recurrence argument, or a
Lyapunov condition forcing repeated returns to a suitable launch region.

\subsection{Discrete and continuous boundary systems}
\label{subsec:continuous-boundary-system}

In this subsection, we trace the evolution of worst-case reachable boundaries. We begin with a discrete-time geometric boundary map, take its continuous-time limit, and reveal its fundamental equivalence to the extremals of the Pontryagin maximum principle. We next describe the geometry of the boundary of the set-valued time-$h$ map
\begin{equation}
    \mathcal F_{h,r}(x)
    :=
    F_h(x)+hr\overline B_1(0).
    \label{eq:set-valued-sampled-map}
\end{equation}
Assume in this subsection that $f\in C^2$ and that $F_h=\phi^h$ is a $C^2$ diffeomorphism on the compact region under consideration. Let $x$ be a smooth boundary point of a set $M$, and let $n\in\mathbb S^{d-1}$ be its outward unit normal.

Define the transported normal
\begin{equation}
    \eta_h(x,n)
    :=
    \frac{DF_h(x)^{-T}n}
         {\|DF_h(x)^{-T}n\|}.
    \label{eq:transported-normal}
\end{equation}
At smooth exposed boundary points for which the metric projection onto $F_h(M)$ is unique, the boundary map associated with \eqref{eq:set-valued-sampled-map} is
\begin{equation}
    \beta_{h,r}(x,n)
    =
    \left(
        F_h(x)+hr\,\eta_h(x,n),
        \eta_h(x,n)
    \right).
    \label{eq:discrete-boundary-map}
\end{equation}
This is the time-$h$ and radius-$hr$ specialization of the finite-dimensional boundary map used for random diffeomorphisms with spherical bounded noise; see \citep{lamb2026bifurcations}.

The inverse transpose in \eqref{eq:transported-normal} is geometric: a normal $m$ to $F_h(M)$ at $F_h(x)$ satisfies $m^\top DF_h(x)v=0$ whenever $n^\top v=0$, i.e.\ $DF_h(x)^\top m\parallel n$, so $m\parallel
DF_h(x)^{-T}n$; normalizing gives $\eta_h(x,n)$. Minkowski addition of $hr\overline B_1(0)$ then displaces an exposed boundary point by $hr$ along its normal, giving the first component of \eqref{eq:discrete-boundary-map}.

For a compact set $S$ with $C^1$ boundary, let $n_S(x)$ denote the outward unit normal at $x\in\partial S$ and write $$
    N^+_1\partial S
    :=
    \bigl\{(x,n_S(x)):x\in\partial S\bigr\}
    \subset\mathbb R^d\times\mathbb S^{d-1}
$$
for its outward unit normal bundle. The following proposition makes precise the propagation of exposed normal directions under $\beta_{h,r}$.

\begin{proposition}[Propagation of exposed normal directions]
\label{prop:normal-bundle-invariance}
Set $\rho:=hr>0$. Let $M\subset\mathbb R^d$ be compact with $C^1$ boundary, invariant under the set-valued map
\eqref{eq:set-valued-sampled-map} in the sense that
$$
    M
    =
    \mathcal F_{h,r}(M)
    =
    F_h(M)+\rho\overline B_1(0)
    =:
    E+\rho\overline B_1(0),
$$
and suppose that $F_h$ is a $C^2$ diffeomorphism of a neighborhood of $M$ onto its image. Call a pair $(x,n)\in N^+_1\partial M$ exposed if the point $z:=F_h(x)+\rho\,\eta_h(x,n)$ satisfies $\operatorname{dist}(z,E)=\rho$ with unique nearest point $F_h(x)\in E$. Then:
\begin{enumerate}
    \item every exposed $(x,n)\in N^+_1\partial M$ satisfies
    $\beta_{h,r}(x,n)\in N^+_1\partial M$;
    \item every $(z,\nu)\in N^+_1\partial M$ such that $z$ has a unique nearest point in $E$ equals $\beta_{h,r}(x,n)$ for exactly one $(x,n)\in N^+_1\partial M$, and this pair is exposed.
\end{enumerate}
In particular, if every $z\in\partial M$ has a unique nearest point in $E$ and every pair in $N^+_1\partial M$ is exposed, then $\beta_{h,r}
    \bigl(N^+_1\partial M\bigr)
    =
    N^+_1\partial M$.
\end{proposition}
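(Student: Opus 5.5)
The argument rests on one geometric fact about Minkowski sums: if $z\in\partial(E+\rho\overline B_1(0))$ has $\operatorname{dist}(z,E)=\rho$ with unique nearest point $p\in E$, then $p\in\partial E$. Moreover, the ball $\overline B_\rho(p)\subset M$ touches $\partial M$ at $z$. Since $\partial M$ is $C^1$, the outward normal of $M$ at $z$ must be $(z-p)/\rho$. At the same time, $E$ has a supporting ball of radius $\rho$ centred at $z$ with $p$ on its boundary. Hence $(z-p)/\rho$ is an outward proximal normal to $E$ at $p$.

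The second ingredient is the relation between normals of $E=F_h(M)$ and normals of $M$. Since $F_h$ is a $C^2$ diffeomorphism near $M$, $\partial E=F_h(\partial M)$, and $\partial E$ is $C^1$ with outward unit normal at $F_h(x)$ equal to $\eta_h(x,n_M(x))$. This is the inverse-transpose computation following \eqref{eq:transported-normal}. Orientation is preserved: outward vectors $v$ with $n^\top v>0$ map to $DF_h v$, and $\eta_h^\top DF_h v$ is a positive multiple of $n^\top v$.

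For part (i), take $(x,n)$ exposed and set $z=F_h(x)+\rho\eta_h$. Since $\operatorname{dist}(z,E)=\rho$, we get $z\in M$. The point $z$ cannot lie in $\operatorname{int}M$: every point $z+t\eta_h$ with $t>0$ has distance $>\rho$ from $E$, because the unique nearest point persists along the normal ray, or more simply because $\|z+t\eta_h-y\|\ge \operatorname{dist}$ fails to be $\le\rho$. So $z\in\partial M$. By the fact above, the outward normal of $M$ at $z$ is $(z-F_h(x))/\rho=\eta_h$. Therefore $\beta_{h,r}(x,n)=(z,\eta_h)\in N^+_1\partial M$.

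For part (ii), take $(z,\nu)\in N^+_1\partial M$ with unique nearest point $p$. Because $z\in\partial M$ and $M=E+\rho\overline B_1$, we have $\operatorname{dist}(z,E)=\rho$; if it were smaller, $z$ would be interior. The fact gives $p\in\partial E$ and $\nu=(z-p)/\rho$, where $\nu$ is an outward normal to $E$ at $p$. Set $x=F_h^{-1}(p)\in\partial M$ and $n=n_M(x)$. The normal computation gives $\eta_h(x,n)=\nu$, since $\partial E$ is $C^1$ and has a unique unit outward normal. Hence $\beta_{h,r}(x,n)=(z,\nu)$, and the pair is exposed by construction. Uniqueness follows because $\beta$ determines $F_h(x)=z-\rho\nu=p$, hence $x$ by injectivity, and $n=n_M(x)$ is forced.

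The final claim combines (i) and (ii). The main obstacle is the justification in (i) that $z\in\partial M$ and the normal identification: one must show that a ball of radius $\rho$ inside $M$ touching $z$ forces $n_M(z)$, using the $C^1$ boundary of $M$ together with the exterior/interior touching argument. This careful boundary argument must be supplied to replace the heuristic description of Minkowski addition.
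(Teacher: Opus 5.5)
Your decomposition matches the paper's. Normals are transported by $DF_h(x)^{-T}$, which is the paper's fact (b). A nearest point $p$ of $z$ in the $C^1$ set $E$ lies on $\partial E$ with $z=p+\rho\,n_E(p)$; this is fact (c), which your ``proximal normal at a $C^1$ point'' remark covers once you exclude the sign $-n_E(p)$ by noting $p-t\,n_E(p)\in E$ for small $t>0$. An interior tangent ball $\overline B_\rho(p)\subset M$ pins down $n_M(z)$; this is fact (d), a routine half-space argument. Part (ii) and the final claim are fine modulo these lemmas. Your uniqueness argument, which reads $F_h(x')=z-\rho\nu$ directly off $\beta_{h,r}$, is slightly more direct than the paper's.

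The genuine gap is the step in (i) showing $z\in\partial M$. You flag it as the main obstacle, but the justification you give is incorrect.

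First, the claim that $\operatorname{dist}(z+t\eta_h,E)>\rho$ for \emph{every} $t>0$ is false in general. Other parts of $E$ may lie further along the ray, and in any case only small $t$ is needed.

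Second, the reason offered, that the unique nearest point $y=F_h(x)$ persists along the ray beyond $z$, fails even for small $t$. Nearest points persist on the segment from $y$ to $z$, not past $z$. For example, take the plane with $\rho=1$, $y=0$, $z=(0,1)$, and let $\partial E$ near $0$ be the graph $w_2=1-\sqrt{1-w_1^2}-w_1^4$ with $E$ below it and the rest of $E$ far from $z$. Then $0$ is the unique nearest point of $z$. Yet for every $t>0$, points of $\partial E$ near $0$ are strictly closer to $z+t\eta$ than $0$ is.

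Your alternative, $\|z+t\eta_h-y\|=\rho+t$, controls only the single point $y$, whereas $\operatorname{dist}$ is an infimum over all of $E$. Uniqueness must also enter somewhere: with two nearest points, e.g.\ $E=\{\pm\rho\}\subset\mathbb R$ and $z=0$, the point $z$ is interior to $M$.

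The missing idea is a uniform lower bound on $\|z+t\eta-w\|$ over all $w\in E$. The paper obtains it by splitting $E$ into two parts.

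For $w=y+v$ with $\|v\|\le\rho$, use the supporting-ball inequality $\langle\eta,v\rangle\le\|v\|^2/(2\rho)$, which is just a rewriting of $\|z-w\|\ge\rho$. It gives $\|z+t\eta-w\|^2\ge(\rho+t)^2-t\rho>\rho^2$.

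For $w\in E\setminus B_\rho(y)$, compactness and uniqueness give a gap $c:=\min\|z-w\|-\rho>0$. Hence $\|z+t\eta-w\|>\rho$ for $t<c$.

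A sequential version also works. Suppose $t_k\downarrow0$ and $w_k\in E$ with $\|z+t_k\eta-w_k\|\le\rho$. Then $w_k\to y$ by compactness and uniqueness. But $\|z+t_k\eta-w_k\|^2\ge\rho^2+2t_k\langle\eta,z-w_k\rangle+t_k^2$ and $\langle\eta,z-w_k\rangle\to\rho>0$, so this exceeds $\rho^2$ for large $k$, a contradiction.

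Either way, $z+t\eta\notin M=\{\operatorname{dist}(\cdot,E)\le\rho\}$ for small $t>0$, so $z\in\partial M$. Only then does your tangent-ball identification $n_M(z)=\eta_h(x,n)$ apply.
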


\begin{proof}
Write $F:=F_h$. We use four elementary facts.

(a) \emph{Boundary points of a compact offset lie at distance exactly $\rho$.}
If $E$ is compact and $z\in\partial(E+\rho\overline B_1(0))$, then $\operatorname{dist}(z,E)=\rho$. Indeed, $z\in E+\rho\overline B_1(0)$ gives
$\operatorname{dist}(z,E)\leq\rho$; and if $\operatorname{dist}(z,E)=s<\rho$ with nearest point $y$, then
every $w$ with $\|w-z\|<\rho-s$ satisfies $\|w-y\|\leq\|w-z\|+s<\rho$, so a full neighborhood of $z$ lies in
$y+\rho\overline B_1(0)\subset E+\rho\overline B_1(0)$, contradicting $z\in\partial(E+\rho\overline B_1(0))$.

(b) \emph{Normal transport.} $E=F(M)$ is compact with $C^1$ boundary $\partial E=F(\partial M)$, and
$n_E(F(x))=\eta_h(x,n_M(x))$ for every $x\in\partial M$. Indeed, if $g$ is a local $C^1$ defining function for $M$ at $x$, so that $M=\{g\leq0\}$ locally and $\nabla g(x)=n_M(x)$, then $g\circ F^{-1}$ is a local defining function for $E$ at $F(x)$ with gradient $DF(x)^{-T}n_M(x)$ there; normalizing gives the claim, including the outward orientation.

(c) \emph{Nearest points on a $C^1$ boundary.} 
If $\operatorname{dist}(z,E)=\rho>0$ and $y\in E$ is a nearest point, then $y\in\partial E$ and $z=y+\rho\,n_E(y)$, since otherwise an interior neighborhood of $y$ would contain points closer to $z$. Minimality of $w\mapsto\|z-w\|^2$ along curves in $\partial E$ through $y$ gives $z-y\perp T_y\partial E$, so $z-y=\pm\rho\,n_E(y)$. If the sign were negative, the points $y-t\,n_E(y)$, which lie in $E$ for small $t>0$, would satisfy $\|z-(y-t\,n_E(y))\|=\rho-t<\rho$, a contradiction.

(d) \emph{Interior tangent balls determine the normal.} Let $S$ be compact with $C^1$ boundary,
$\overline B_\rho(c)\subset S$, and $z\in\partial S\cap\partial B_\rho(c)$; set $m:=(z-c)/\rho$. For every unit vector $v$ with $\langle m,v\rangle<0$, the points $z+tv$ lie in $B_\rho(c)\subset S$ for all small $t>0$, so a local defining function $g$ of $S$ at $z$ satisfies $g(z+tv)\leq0$, whence $\langle n_S(z),v\rangle\leq0$. By continuity, $\langle m,v\rangle\leq0$ implies $\langle n_S(z),v\rangle\leq0$; this inclusion of closed half-spaces between unit vectors forces $n_S(z)=m$.

\emph{Proof of (i).} Let $(x,n)\in N^+_1\partial M$ be exposed, and set $y:=F(x)$, $\eta:=\eta_h(x,n)$, and $z:=y+\rho\eta$, so that $\eta=n_E(y)$ by (b). Clearly $z\in E+\rho\overline B_1(0)=M$. We claim that $z+t\eta\notin M$ for all sufficiently small $t>0$, which gives $z\in\partial M$. First, $\operatorname{dist}(z,E)=\rho$ means $\|z-w\|\geq\rho$ for all $w\in E$; writing $w=y+v$ and expanding $\|z-w\|^2=\rho^2-2\rho\langle\eta,v\rangle+\|v\|^2$, this is equivalent to the supporting-ball inequality
\begin{equation}
\label{eq:supporting-ball}
    \langle\eta,v\rangle
    \leq
    \frac{\|v\|^2}{2\rho}
    \qquad
    \text{for every }w=y+v\in E.
\end{equation}
For $w=y+v\in E$ with $\|v\|\leq\rho$, using
\eqref{eq:supporting-ball},
$$
    \|z+t\eta-w\|^2
    =
    (\rho+t)^2-2(\rho+t)\langle\eta,v\rangle+\|v\|^2
    \geq
    (\rho+t)^2-\frac{t}{\rho}\|v\|^2
    \geq
    (\rho+t)^2-t\rho
    >
    \rho^2
$$
for every $t>0$. For $w\in E\setminus B_\rho(y)$, uniqueness of the nearest point and compactness of $E$ give
$c:=\min\{\|z-w\|:w\in E\setminus B_\rho(y)\}-\rho>0$, so $\|z+t\eta-w\|\geq\rho+c-t>\rho$ for $t<c$. Hence
$\operatorname{dist}(z+t\eta,E)>\rho$ for $0<t<c$, so $z+t\eta\notin M$ and $z\in\partial M$. Finally, the ball
$\overline B_\rho(y)\subset E+\rho\overline B_1(0)=M$ touches $\partial M$ at $z$, so (d) gives $n_M(z)=(z-y)/\rho=\eta$. Thus $\beta_{h,r}(x,n)=(z,\eta)\in N^+_1\partial M$.

\emph{Proof of (ii).} Let $(z,\nu)\in N^+_1\partial M$ and let $y$ be the unique nearest point of $z$ in $E$. By (a), $\operatorname{dist}(z,E)=\rho$; by (b), $\partial E$ is $C^1$; by (c), $y\in\partial E$ and $z=y+\rho\,n_E(y)$; and by (d), applied to the ball $\overline B_\rho(y)\subset M$, $\nu=n_M(z)=n_E(y)$. Since $F$ is a diffeomorphism, there is a unique $x:=F^{-1}(y)\in\partial M$, and (b) gives $n_E(y)=\eta_h(x,n_M(x))$. Hence $(z,\nu)=\beta_{h,r}(x,n_M(x))$, and the pair $(x,n_M(x))$ is exposed by construction. For uniqueness, suppose $\beta_{h,r}(x',n')=(z,\nu)$ for some $(x',n')\in N^+_1\partial M$. Then $\|z-F(x')\|=\rho\,\|\eta_h(x',n')\|=\rho$, so $F(x')$ is a nearest point of $z$ in $E$, whence $F(x')=y$, $x'=x$, and $n'=n_M(x')$ is determined by $x'$.
\end{proof}

The exposedness qualification is important. The boundary map propagates smooth exposed points and their normals, but it need not globally parameterize the entire boundary after folds, loss of exposure, multiple
contributors, or self-intersections. Nonsmooth boundaries must instead be described using normal cones. Recent computations for the H\'enon map show that local and global bifurcations of the boundary map can produce both
topological changes of a bounded-noise attractor and the creation of wedge or shallow boundary singularities \citep{lamb2026bifurcations}. Rather than tracking these complex topological bifurcations, we use the discrete boundary map strictly to derive the continuous-time boundary dynamics. Taking the limit $h \downarrow 0$ at a fixed $r$, we obtain the following expansion uniformly on compact subsets on which $f\in C^2$,
\begin{align*}
    F_h(x)
    &=
    x+hf(x)+O(h^2),\\
    DF_h(x)
    &=
    I+hDf(x)+O(h^2),\\
    DF_h(x)^{-T}
    &=
    I-hDf(x)^\top+O(h^2).
\end{align*}
For $n\in\mathbb S^{d-1}$, set $B=Df(x)^\top$. Then $DF_h(x)^{-T}n=n-hBn+O(h^2)$ and $\|DF_h(x)^{-T}n\| = 1-hn^\top Bn+O(h^2)$. It follows that
    $\eta_h(x,n)
    =
    n-hBn+h(n^\top Bn)n+O(h^2)
    =
    n-h(I-nn^\top)Df(x)^\top n+O(h^2)$.
Substitution into \eqref{eq:discrete-boundary-map} gives
\begin{equation}
    \beta_{h,r}(x,n)
    =
    (x,n)
    +
    h
    \begin{pmatrix}
        f(x)+rn\\[1mm]
        -(I-nn^\top)Df(x)^\top n
    \end{pmatrix}
    +O(h^2).
    \label{eq:boundary-map-consistency}
\end{equation}

\begin{theorem}[Small-step limit of the boundary map]
\label{thm:boundary-limit}
On every compact regular part of the unit normal bundle, $\beta_{h,r}$ is a first-order consistent discretization of the boundary system
\begin{align}
    \dot x
    &=
    f(x)+rn,
    \label{eq:continuous-boundary-x}\\
    \dot n
    &=
    -(I-nn^\top)Df(x)^\top n.
    \label{eq:continuous-boundary-n}
\end{align}
More precisely, the local truncation error in \eqref{eq:boundary-map-consistency} is $O(h^2)$, uniformly on compact regular subsets of the unit normal bundle.
\end{theorem}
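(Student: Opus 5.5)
The plan is to prove consistency in the standard one-step sense. Let $\Phi^h$ denote the exact time-$h$ flow of \eqref{eq:continuous-boundary-x}--\eqref{eq:continuous-boundary-n} on $\mathbb R^d\times\mathbb S^{d-1}$. We must show that $\beta_{h,r}(x,n)-\Phi^h(x,n)=O(h^2)$ uniformly on a compact regular set $\mathcal K$.

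The first step is to check that the boundary system is a well-defined $C^1$ vector field tangent to the sphere bundle. Writing $G(x,n)=\bigl(f(x)+rn,\,-(I-nn^\top)Df(x)^\top n\bigr)$, we have $n^\top(I-nn^\top)=0$, so $\tfrac{d}{dt}\|n\|^2=0$ and $\mathbb R^d\times\mathbb S^{d-1}$ is invariant. Because $f\in C^2$, $G$ is $C^1$ and hence bounded with bounded derivative on a compact neighborhood of $\mathcal K$. Taylor's theorem with integral remainder then gives $\Phi^h(x,n)=(x,n)+hG(x,n)+O(h^2)$ uniformly for $(x,n)\in\mathcal K$ and $h\le h_0$. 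Here $h_0$ is chosen so that trajectories stay in the neighborhood, which is possible since $|G|$ is bounded there.

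The second step is to justify \eqref{eq:boundary-map-consistency} with uniform remainders, using the expansions displayed before the theorem. Since $\partial_hF_h=f\circ F_h$ and $\partial_hDF_h=Df(F_h)DF_h$ with $f\in C^2$, Taylor in $h$ gives $F_h(x)=x+hf(x)+O(h^2)$ and $DF_h(x)=I+hDf(x)+O(h^2)$. The remainder constants are controlled by $\sup|Df|\,|f|$ and $\sup|D^2f|\,|f|+|Df|^2$ over a compact neighborhood. A Neumann series then gives $DF_h^{-T}=I-hB+O(h^2)$ with $B=Df(x)^\top$. Next we expand the normalization. Since $\|n-hBn+O(h^2)\|^2=1-2hn^\top Bn+O(h^2)$ and the square root is smooth near $1$, we get $\|DF_h^{-T}n\|^{-1}=1+hn^\top Bn+O(h^2)$. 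This yields $\eta_h=n-h(I-nn^\top)Bn+O(h^2)$. Finally, $hr\eta_h=hrn+O(h^2)$, so the first component is $x+h(f(x)+rn)+O(h^2)$. All constants depend only on $\mathcal K$ and the $C^2$ norm of $f$ on a neighborhood, so the bounds are uniform.

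The third step combines the two expansions: subtracting gives $\beta_{h,r}-\Phi^h=O(h^2)$ uniformly on $\mathcal K$, which is the claimed first-order consistency.

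The main obstacle is making the uniformity honest, in particular controlling $\|DF_h^{-T}n\|$ away from zero and showing that the $O(h^2)$ in the inverse and in the normalization is uniform. I would handle this by fixing $h_0$ with $\|DF_h-I\|\le\tfrac12$ on the compact set, which follows from the Gronwall bound $\|DF_h-I\|\le e^{L'h}-1$. This gives $\|DF_h^{-T}n\|\ge \tfrac23$ and makes every map in the composition smooth with uniformly bounded derivatives.

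The word ``regular'' restricts the statement to where the formula \eqref{eq:discrete-boundary-map} represents the geometric boundary map via Proposition~\ref{prop:normal-bundle-invariance}. The consistency estimate itself is purely analytic and holds for all $(x,n)$ in any compact set on which $f\in C^2$.
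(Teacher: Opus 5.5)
Your proposal is correct and follows essentially the same route as the paper, which obtains the result by expanding $F_h(x)=x+hf(x)+O(h^2)$ and $DF_h(x)^{-T}=I-hDf(x)^\top+O(h^2)$, deriving $\eta_h=n-h(I-nn^\top)Df(x)^\top n+O(h^2)$ from these, and substituting into $\beta_{h,r}$. Your extra comparison with the exact flow $\Phi^h$ of the boundary system, and your explicit uniformity control (the Gr\"onwall bound that keeps $\|DF_h(x)^{-T}n\|\ge\tfrac23$), make rigorous what the paper leaves implicit in its $O(h^2)$ symbols, without changing the argument.
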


The vector field in \eqref{eq:continuous-boundary-n} is tangent to $\mathbb S^{d-1}$, since
    $\frac{d}{dt}\|n\|^2
    =
    -2n^\top(I-nn^\top)Df(x)^\top n
    =
    0.$
Thus a unit initial normal remains a unit normal for as long as the solution exists.

The boundary system follows from the Pontryagin maximum principle. Consider the controlled system
$\dot x=f(x)+u$, $\|u\|\leq r$. The maximized Hamiltonian is
\[
    H_r(x,p)
    :=
    \max_{\{\|u\|\leq r\}}
    p^\top(f(x)+u)
    =
    p^\top f(x)+r\|p\|.
\]
For $p\neq0$, the maximizing control is $u^\ast = r\frac{p}{\|p\|}$. The associated costate equation is $\dot p=-Df(x)^\top p$.  For nonzero costates $p\neq0$, introducing the normalized direction $n=\frac{p}{\|p\|}$, and differentiating yields $\dot n = -(I-nn^\top)Df(x)^\top n$, while the state equation becomes $\dot x=f(x)+rn$. Hence \eqref{eq:continuous-boundary-x}--\eqref{eq:continuous-boundary-n} describe the normalized Pontryagin extremal system corresponding to exposed boundary directions.

This interpretation should not be confused with a global optimality statement. The boundary system generates extremal trajectories associated with supporting directions. After the occurrence of folds, conjugate points, or loss of exposure, such an extremal may enter the interior of the reachable set and no longer represent the global boundary. Therefore, intensity computation requires both integration of the boundary system and an independent global first-contact test with $\partial\mathcal D(A)$.

The same construction extends to anisotropic disturbances. Let $h_C(p) := \max_{c\in C}p^\top c$
be the support function of the convex disturbance body $C$. The Hamiltonian becomes $H_{r,C}(x,p) = p^\top f(x)+r h_C(p)$. If $C$ is strictly convex and $h_C$ is differentiable away from the origin, the maximizing disturbance is uniquely given by $u^\ast = r\nabla h_C(p)$. Consequently, the Pontryagin system takes the form $\dot x = f(x)+r\nabla h_C(p)$ and $\dot p = -Df(x)^\top p$. Unlike the Euclidean ball case, the maximizing disturbance direction is not generally parallel to the costate. Hence the normalized direction $n=p/\|p\|$ no longer closes the system by itself; the natural anisotropic boundary dynamics are formulated in the costate variable $p$.

For nonsmooth or non-strictly-convex disturbance bodies, the maximizer need not be unique. In that case, the gradient is replaced by the subdifferential of the support function,
$\partial h_C(p)
    =
    \operatorname{co}
    \left\{
        c\in C:
        p^\top c=h_C(p)
    \right\}$,
and the state equation becomes the differential inclusion
$\dot x
    \in
    f(x)+r\,\partial h_C(p)$.
The state equation becomes set-valued. State-dependent disturbance sets would introduce additional geometric terms and are not considered here.

\subsection{Nonautonomous boundary dynamics and unbounded noise}
\label{subsec:nonautonomous-and-unbounded}

The boundary system construction extends naturally to nonautonomous environments. The same geometric construction applies fiberwise to the skew-product system introduced in Section~\ref{sec:nonautonomous}.
For a fixed base trajectory $\omega\in\Omega$, the fiberwise Hamiltonian is
$H_r^\omega(t,x,p)=p^\top f(\theta_t\omega,x)+r\|p\|$. For nonzero costates $p$, setting $n=p/\|p\|$ gives the fiberwise boundary system
\[
\dot x=f(\theta_t\omega,x)+rn,
\qquad
\dot n
=
-(I-nn^\top)D_xf(\theta_t\omega,x)^\top n,
\]
which defines a skew-product flow on $\Omega\times\mathbb R^d\times\mathbb S^{d-1}$.

The sampled boundary map is
\[
\mathcal B_{h,r}
(\omega,x,n)
=
\left(
\theta_h\omega,
\varphi(h,\omega,x)+hr\,\eta_h^\omega(x,n),
\eta_h^\omega(x,n)
\right),
\]
where
\[
\eta_h^\omega(x,n)
=
\frac{
D_x\varphi(h,\omega,x)^{-T}n
}{
\|D_x\varphi(h,\omega,x)^{-T}n\|
}.
\]
It is a first-order discretization of the boundary system when $f$ is $C^2$ in $x$ and $D_xf$ is uniformly bounded on a common trapping region. Deterministic finite-dimensional boundary systems for nonautonomous
differential inclusions are developed in greater generality in \citep{kourliouros2026invariant}; we do not claim that formalism here. The key point is that the scaling of the sampled noise as $hrC$ is the
natural scaling both for the intensity limit and for the convergence of the discrete boundary map to the Pontryagin boundary system.

While the boundary system formalism extends to nonautonomous environments, the threshold theory fundamentally relies on bounded noise support. If the noise law has unbounded support and assigns positive probability to every nonempty open set, then for every $R>0$, $\mathbb P(\|\xi_k\|>R)>0$. Hence no finite deterministic amplitude bound can guarantee pathwise safety, and the worst-case support threshold degenerates. The relevant questions become probabilistic---first exit time $\tau_{\mathcal D}$, its mean,
finite-horizon escape probabilities, and, in the small-noise regime, the Freidlin--Wentzell action and quasipotential. Under the classical Freidlin--Wentzell assumptions for metastable small-noise diffusions \citep{freidlinrandom,berglund2006noise},
\[
\varepsilon
\log\mathbb E_x[\tau_{\mathcal D}^{\varepsilon}]
\to
V_{\partial\mathcal D}
\]
.
Intensity, based on an $L^\infty$-type amplitude constraint, and the quasipotential, based on an action-minimizing principle for exponentially unlikely paths, answer different questions and need not determine each
other. Numerical methods for quasipotential computation are surveyed in \citep{grafke2019numerical,cameron2012finding}, and diffusion simulation is standard \citep{kloeden2012numerical}. A systematic comparison is left for future work.

\section{Computation, examples, and conclusions}
\label{sec:examples}

This section does three things beyond illustrating trajectories: it verifies the sampling theorems against exact and independently computed thresholds, it tests the boundary-system computation on a genuinely
two-dimensional escape problem validated by independent methods, and it demonstrates the distinction between worst-case and probabilistic escape under bounded noise. The experiments are:
\begin{enumerate}
    \item an exact scalar verification with a separate sharpness test for the general $O(h)$ rate (Section~\ref{subsec:exact-scalar-benchmark});
    \item an applied bistable grazing model in which intensity is an early-warning indicator vanishing at a tipping fold (Section~\ref{subsec:grazing});
    \item a planar damped-Duffing escape threshold by three independent methods with an extremal-control replay check (Sections~\ref{subsec:planar-radial-example}--\ref{subsec:duffing-escape}), and its anisotropic $C$-intensity for elliptical disturbance bodies (Section~\ref{subsec:anisotropic-example});
    \item nonautonomous invariant-graph experiments---a periodically forced graph with verified fold residuals, sampling to $N=512$, phase uniformity, and an averaging sweep, and a genuinely quasiperiodic graph on the two-torus (Section~\ref{subsec:nonautonomous-computation});
    \item bounded-noise escape separating the support-wise threshold from finite-horizon escape probabilities via an absorbing transfer operator validated by Monte Carlo (Section~\ref{subsec:bounded-noise-experiment}).
\end{enumerate}

All optimization problems were solved independently for each parameter value. Reference and replay integrations used the DOP853 method with relative and absolute tolerances $2\times10^{-12}$ and $2\times10^{-14}$; vectorized sweeps used fixed-step classical Runge--Kutta with step sizes stated in each experiment, chosen so that the integration error is at least two orders of magnitude below the smallest reported difference. Scalar maximizations used bounded Brent optimization with tolerance $10^{-15}$. Random seeds are stated where randomness enters. Reported convergence rates are $p_j := \frac{\log(E_j/E_{j+1})}{\log(h_j/h_{j+1})}$, where $E_j$ denotes the relevant absolute error.

Table~\ref{tab:assumption-result-summary} records the hypotheses actually used by each principal result, preventing numerical agreement in the examples from blurring unrestricted intensity, block intensity, and probabilistic escape.

\begin{table}[htbp]
    \centering
    \caption{Assumption-and-result summary.}
    \label{tab:assumption-result-summary}
    % 取消了 \resizebox，直接利用 p 列配合 @{} 消除边缘留白
    \begin{tabular}{@{}p{.22\linewidth} p{.36\linewidth} p{.36\linewidth}@{}}
        \toprule
        Result & Principal assumptions & Consequence\\
        \midrule
        Autonomous sampling
        &
        Forward completeness; compact attractor; common compact controlled
        region; Lipschitz constant \(L\)
        &
        \(\mu_h/h\to\mu\), with explicit two-sided bounds
        \\
        Numerical discretization
        &
        One-step consistency of order \(q\); common trapping block;
        continued attractor inside that block
        &
        \(\mu_h^\Psi/h=\mu+O(h)+O(h^q)\) for block intensity
        \\
        Nonautonomous sampling
        &
        Compact invertible base flow; uniform fiberwise Lipschitz bound;
        attracting invariant graph; common compact fiber block
        &
        \(\mu_{\mathrm{na},h}/h\to\mu_{\mathrm{na}}\) uniformly over the base
        \\
        Pathwise bounded-noise safety
        &
        \(\operatorname{supp}\xi_k\subseteq hrC\) and
        \(hr<\mu_{h,C}\)
        &
        No admissible realization leaves the deterministic basin
        \\
        Possible finite escape
        &
        \(hr>\mu_{h,C}\) and the finite-exit characterization
        &
        Existence of an admissible finite escape word
        \\
        Almost-sure escape
        &
        Robust accessibility plus independence/Markov property and a
        uniform minorization or recurrence condition
        &
        Escape with probability one; geometric survival bound
        \\
        Boundary-map limit
        &
        \(C^2\) flow; locally invertible time-\(h\) map; smooth exposed
        boundary point
        &
        Convergence to the normalized Pontryagin boundary system
        \\
        \bottomrule
    \end{tabular}
\end{table}

\subsection{Exact scalar benchmarks: validation, sharpness, and numerical error separation}
\label{subsec:exact-scalar-benchmark}

This example serves three purposes. First, it provides an exactly solvable benchmark for validating the sampling theorem without numerical ambiguity. Second, it illustrates that the general first-order estimate may be non-sharp for smooth systems due to higher-order cancellations. Third, it provides a reference point for separating the intrinsic sampling error from the error introduced by numerical integrators.

Consider
\begin{equation}
    \dot x=x-x^3=x(1-x^2).
    \label{eq:scalar-benchmark}
\end{equation}
The equilibria at \(x=\pm1\) are attracting, while \(x=0\) is unstable. We take $A=\{1\}$, $\mathcal D(A)=(0,\infty)$. A control driving the system from \(1\) toward the basin boundary acts in the negative direction:
\[
    \dot x=x-x^3-u,
    \qquad 0\leq u\leq r.
\]
For this one-dimensional bistable system, the reachable-set definition reduces to overcoming the maximal restoring drift along the interval connecting the attractor to the basin boundary. Since $f'(x)=1-3x^2$, the maximum is attained at $x_*=\frac1{\sqrt3}$, which gives $\mu(A)=f(x_*)=\frac{2}{3\sqrt3}$.

The flow of \eqref{eq:scalar-benchmark} is explicit. For \(x>0\), $F_h(x)=\phi^h(x) = \frac{xe^h}{\sqrt{1+x^2(e^{2h}-1)}}$. Since \(F_h\) is order preserving, the most adverse discrete perturbation is the constant negative kick; by Proposition~\ref{prop:scalar-one-sided}, applied with \(b=0\) and
\(a=1\) (its hypotheses hold because \(f\leq0\) on \([1,\infty)\) and \(f(x)\to-\infty\) as \(x\to\infty\)), the discrete intensity is $\mu_h(A) = \max_{0<x<1}\bigl(F_h(x)-x\bigr)$. Indeed, an $\varepsilon$-kick can cross the basin boundary only when the negative displacement exceeds the gap between the sampled trajectory and the unstable equilibrium at the origin. Therefore the critical kick amplitude is the maximal value of $F_h(x)-x$ over the interval $(0,1)$. Order preservation makes the constant negative kick extremal; maximizing $F_h(x)-x$ (the maximizer solves $F_h'(x_h)=1$, i.e. $x_h^2=(e^{2h/3}-1)/(e^{2h}-1)$) yields the exact closed form
\begin{equation}
    \mu_h(A)
    =
    \frac{
        \bigl(e^{2h/3}-1\bigr)^{3/2}
    }{
        \bigl(e^{2h}-1\bigr)^{1/2}
    }.
    \label{eq:scalar-exact-discrete-intensity}
\end{equation}
The closed-form expression provides an exact reference value for the sampled intensity. The numerical maximization by Brent's method agrees with this formula to machine precision, confirming the implementation of the reachable-set computation. 
This example also highlights a subtle point concerning the localization of the Lipschitz constant in Theorem~\ref{thm:sampling}.
Theorem~\ref{thm:sampling} cannot be used with $L=\max_{[0,1]}|f'|=2$
here: the reachable set from $A=\{1\}$ extends past the attractor to $x=2/\sqrt3$ (where $|f'|=3$) at the critical amplitude, where the fold point of the controlled vector field is located and outside the region where $L=2$ is valid. For scalar one-sided basins this is unnecessary---Proposition~\ref{prop:scalar-one-sided} gives bounds using only $f|_{[0,1]}$,
\begin{equation}
    \mu(A)\,\frac{1-e^{-2h}}{2h}
    \leq
    \frac{\mu_h(A)}h
    \leq
    \mu(A).
    \label{eq:scalar-theoretical-bounds}
\end{equation}
Table~\ref{tab:scalar-bounds} verifies both inequalities and reports the observed rates $p_h=\log_2(E_h/E_{h/2})$; in accordance with the one-sided upper bound, the normalized discrete intensity approaches \(\mu(A)\) from below.

\begin{table}[t]
    \centering
    \caption{Exact normalized sampled intensity for the smooth scalar benchmark. The numerical values satisfy the theoretical one-sided bounds and converge to $\mu(A)$ with second-order accuracy.}
    \label{tab:scalar-bounds}
    \begin{tabular}{cccccc}
        \toprule
        \(h\)
        &
        lower bound
        &
        \(\mu_h/h\)
        &
        upper bound
        &
        \(E_h=|\mu_h/h-\mu|\)
        &
        \(p_h\)
        \\
        \midrule
        \(0.4000\) & \(0.264941726\) & \(0.381519859\)
                   & \(0.384900179\) & \(3.3803\times10^{-3}\) & \(1.99\)\\
        \(0.2000\) & \(0.317234684\) & \(0.384047435\)
                   & \(0.384900179\) & \(8.5274\times10^{-4}\) & \(2.00\)\\
        \(0.1000\) & \(0.348852828\) & \(0.384686508\)
                   & \(0.384900179\) & \(2.1367\times10^{-4}\) & \(2.00\)\\
        \(0.0500\) & \(0.366280949\) & \(0.384846731\)
                   & \(0.384900179\) & \(5.3448\times10^{-5}\) & \(2.00\)\\
        \(0.0250\) & \(0.375436065\) & \(0.384886816\)
                   & \(0.384900179\) & \(1.3364\times10^{-5}\) & \(2.00\)\\
        \(0.0125\) & \(0.380128772\) & \(0.384896838\)
                   & \(0.384900179\) & \(3.3411\times10^{-6}\) & --\\
        \bottomrule
    \end{tabular}
\end{table}

Interestingly, the smooth benchmark exhibits a higher-order convergence than predicted by the general theorem: a notable cancellation occurs in this example.  Expanding \eqref{eq:scalar-exact-discrete-intensity} gives
\begin{equation}
    \frac{\mu_h(A)}h
    =
    \mu(A)
    \left(
        1-\frac{h^2}{18}+O(h^3)
    \right).
    \label{eq:scalar-second-order-expansion}
\end{equation}
Thus the exact normalized intensity converges with order two, even though Theorem~\ref{thm:sampling} guarantees only order one in general. This does not contradict the theorem: the \(O(h)\) bounds are uniform
worst-case estimates and need not be sharp. The cancellation follows from the expansion
\[
F_h(x)-x
=
hf(x)+\frac{h^2}{2}f'(x)f(x)+O(h^3).
\]
At the maximizing point $x_*$ of $f$, one has $f'(x_*)=0$, and hence the first correction term vanishes, in accordance with Corollary~\ref{cor:scalar-second-order}, which gives the a priori bound \(0\leq\mu-\mu_h/h\leq(L_2\mu^2/6)h^2\) with \(L_2=\max_{[0,1]}|f''|=6\).

Therefore, this smooth example cannot test the optimality of the first-order theorem. To show that the $O(h)$ rate is unavoidable, we next consider a nonsmooth scalar example. That role is played by the kinked vector field of Example~\ref{ex:sharp-scalar}, for which
    $\frac{\mu_h(A)}h-\mu(A)
    =
    \frac{1-e^{-h/2}}{h}-\frac12
    =
    \Theta(h)$.
The computed errors for $h\in\{0.4,0.2,0.1,0.05,0.025,0.0125\}$ are $4.68\times10^{-2}$ down to $1.56\times10^{-3}$ with observed slopes $0.953,\;0.976,\;0.988,\;0.994,\;0.997$, approaching one from below. The one-sided bounds themselves are plotted in Figure~\ref{fig:autonomous-validation}(a).

\begin{figure}[htbp]
  \centering
  \includegraphics[width=0.5\textwidth]{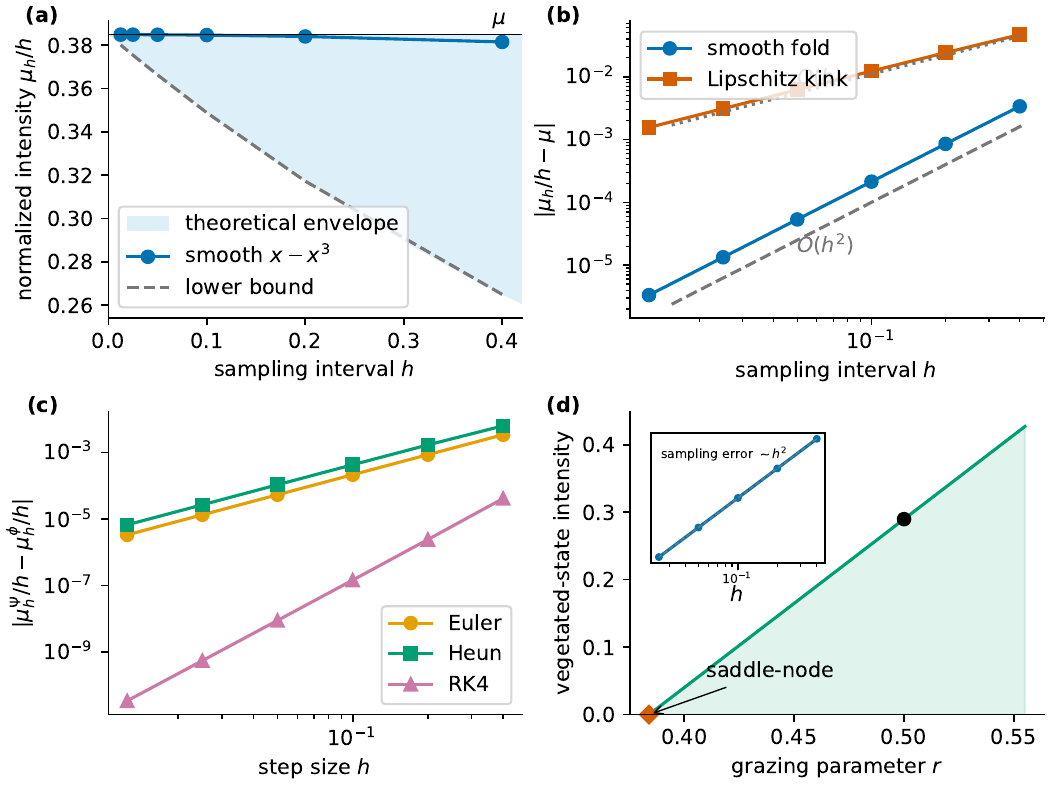}
  \caption{Autonomous sampling and numerical validation.  (a)~For
  $\dot x=x-x^3$, the exact normalized sampled intensity lies inside the
  theoretical envelope and converges to $\mu=2/(3\sqrt3)$.  (b)~A Lipschitz
  kink realizes first-order convergence, whereas a smooth scalar fold exhibits
  second-order cancellation.  (c)~The additional one-step discretization error
  follows the expected method-dependent rate.  (d)~In the grazing model,
  intensity tends to zero at the saddle-node; the inset confirms second-order
  sampling convergence in the representative bistable regime.}
  \label{fig:autonomous-validation}
\end{figure}

Figure~s\ref{fig:autonomous-validation}(b)--(c) separates the sampling error from the numerical one-step error. For a numerical map \(\Psi_h\), we compute the block intensity
    $\mu_h^\Psi
    =
    \max_{0<x<1}\bigl(\Psi_h(x)-x\bigr)$
and compare it with the exact-map value \(\mu_h\). Table~\ref{stab:integrators} gives the corresponding errors for explicit Euler, Heun's method, and classical fourth-order Runge--Kutta. The observed slopes are $2,2,4$ for the three methods relative to the exact sampled map.

\begin{table}[htbp]
    \centering
    \caption{Sampling and numerical-integrator errors for the scalar benchmark. The method columns report \(\bigl|\mu_h^\Psi/h-\mu_h^\phi/h\bigr|\).}
    \label{stab:integrators}
    \begin{tabular}{ccccc}
        \toprule
        \(h\)
        &
        \(\bigl|\mu_h^\phi/h-\mu\bigr|\)
        &
        Euler
        &
        Heun
        &
        RK4
        \\
        \midrule
        \(0.2000\) & \(8.5274\times10^{-4}\)
                   & \(8.5274\times10^{-4}\)
                   & \(1.6750\times10^{-3}\)
                   & \(2.4224\times10^{-6}\)\\
        \(0.1000\) & \(2.1367\times10^{-4}\)
                   & \(2.1367\times10^{-4}\)
                   & \(4.2547\times10^{-4}\)
                   & \(1.4675\times10^{-7}\)\\
        \(0.0500\) & \(5.3448\times10^{-5}\)
                   & \(5.3448\times10^{-5}\)
                   & \(1.0678\times10^{-4}\)
                   & \(9.0374\times10^{-9}\)\\
        \(0.0250\) & \(1.3364\times10^{-5}\)
                   & \(1.3364\times10^{-5}\)
                   & \(2.6721\times10^{-5}\)
                   & \(5.6076\times10^{-10}\)\\
        \(0.0125\) & \(3.3411\times10^{-6}\)
                   & \(3.3411\times10^{-6}\)
                   & \(6.6818\times10^{-6}\)
                   & \(3.4930\times10^{-11}\)\\
        \midrule
        estimated slope
                   & \(2.00\) & \(2.00\) & \(2.00\) & \(4.01\)\\
        \bottomrule
    \end{tabular}
\end{table}

For explicit Euler,
    $\Psi_h^{\mathrm E}(x)=x+h(x-x^3)$,
so that
    $\frac{\mu_h^{\mathrm E}}h
    =
    \max_{0<x<1}(x-x^3)
    =
    \mu(A)$
exactly. Its error relative to the exact sampled map is consequently precisely the second-order sampling correction in \eqref{eq:scalar-second-order-expansion}. This apparent higher-order behavior is not a generic property of Euler's method; it results from the exact coincidence between the Euler drift increment and the continuous maximizer in this particular polynomial example.

\subsection{An applied resilience example: intensity as an early-warning indicator}
\label{subsec:grazing}

The previous scalar examples validated the mathematical properties of sampling intensity. We now demonstrate its interpretation as a resilience indicator in an applied bistable system. Consider May's grazing model
\citep{menck2013basin,scheffer2001catastrophic}
\begin{equation}
    \dot x=r\,x\Bigl(1-\frac{x}{K}\Bigr)-\frac{x^2}{1+x^2},
    \qquad K=10.
    \label{eq:grazing}
\end{equation}
For a range of grazing parameters $r$, the system exhibits bistability: a vegetated equilibrium $x_+$ and a degraded equilibrium coexist, separated by an unstable equilibrium $x_{\mathrm u}$. The basin of the vegetated state is the half-line $(x_{\mathrm u},\infty)$ in the nonnegative state space. Since escape can occur only through the left basin boundary, Proposition~\ref{prop:scalar-one-sided} yields $\mu(x_+) = \max_{x_{\mathrm u}<x<x_+} f(x;r)$. As $r$ approaches the saddle-node value $r_*$, the stable and unstable equilibria coalesce and the restoring drift barrier vanishes (Figure~\ref{fig:tipping}). Consequently, $\mu(x_+)\to0$. The vegetated equilibrium therefore loses resilience before the bifurcation destroys its existence. Unlike statistical early-warning indicators based on fluctuations \citep{scheffer2009early}, intensity provides a deterministic measure of the disturbance amplitude required to leave the basin of attraction. For a representative bistable regime, we choose $r=0.5$, where both equilibria are well separated: $x_{\mathrm u}=2.000$, $x_+=7.317$. The intensity is $\mu=0.28966$, and the sampled intensity converges as $\mu_h(x_+)/h\to\mu$ with observed slope $2.00$ (Figure~\ref{fig:autonomous-validation}(d)).
As in the smooth benchmark, the observed second-order convergence is a problem-dependent cancellation and is not a contradiction of the general first-order estimate of Theorem~\ref{thm:sampling}. 
The quantity $r_{\mathrm{esc}}(h)=\mu_h/h$ therefore represents the critical bounded disturbance rate for the
sampled dynamics, and converges to the continuous-time resilience threshold $\mu(x_+)$ as $h\to0$. This example demonstrates that the abstract intensity theory yields a sampling-independent resilience metric in an applied bistable system.

\begin{figure}[htbp]
    \centering
    \includegraphics[width=0.5\linewidth]{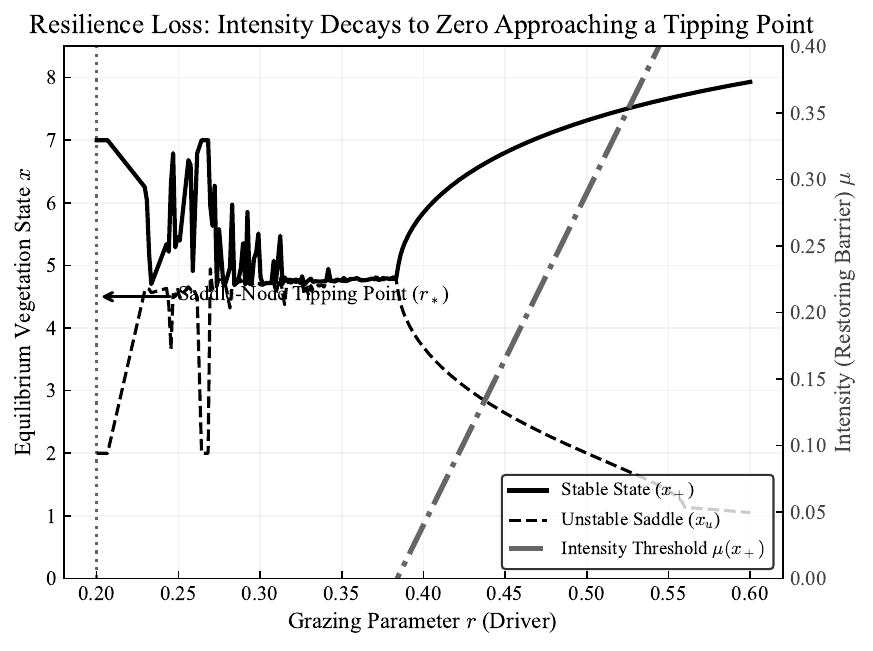}
    \caption{Resilience loss: intensity decays to zero approaching a tipping point.}
    \label{fig:tipping}
\end{figure}

\subsection{A planar radial example: global resilience beyond linear stability}
\label{subsec:planar-radial-example}

The previous examples were one-dimensional and therefore reduced the escape problem to an interval barrier. We now consider a genuinely two-dimensional system to demonstrate that intensity captures global basin
geometry beyond local linear stability. Consider the radial vector field
\[
\dot z=f_a(z)
=
\frac1{a^2}
(\|z\|^2-a^2)(1-\|z\|^2)z,
\qquad z\in\mathbb R^2,
\]
with $0<a<1$. In polar coordinates, \(\rho=\|z\|\) satisfies 
\begin{equation}
\dot\rho = g_a(\rho) := \frac1{a^2} \rho(\rho^2-a^2)(1-\rho^2).
    \label{eq:radial-scalar-equation}
\end{equation}
Since $g_a(\rho)<0$ for $0<\rho<a$, trajectories inside the circle $\rho=a$ converge to the origin, and therefore $\mathcal D(A_0)=B_a(0)$. The circle \(\|z\|=a\) is the basin boundary. Furthermore, the circle $\|z\|=1$ is an outer attracting continuum of equilibria; in the radial equation it appears as the stable equilibrium $\rho=1$.

The linearization at the origin is \(Df_a(0)=-I\) for every \(a\in(0,1)\). Thus both local eigenvalues equal \(-1\), independently of the basin radius and of the transient radial drift. This provides a direct counterexample to any attempt to infer resilience from local eigenvalues alone. Nevertheless, the intensity varies with \(a\). For \(0<\rho<a\), the inward restoring speed is
\begin{equation}
    q_a(\rho)
    :=
    -g_a(\rho)
    =
    \frac{\rho(a^2-\rho^2)(1-\rho^2)}{a^2}.
    \label{eq:radial-restoring-speed}
\end{equation}
Rotational symmetry and the comparison principle imply
    $\mu(A_0)
    =
    \max_{0<\rho<a}q_a(\rho)$.
Indeed, every admissible controlled trajectory satisfies \(\tfrac{d}{dt}\|z\|\leq g_a(\|z\|)+\|u(t)\|\) for $z\neq 0$, so for \(r<\max q_a\) no trajectory from the origin can cross a circle on which \(q_a>r\); conversely, the outward radial control \(u=r\,z/\|z\|\) whenever $\rho>0$ realizes equality in this estimate, so every amplitude \(r>\max q_a\) produces escape. This is the mirrored, radial form of the argument proving Proposition~\ref{prop:scalar-one-sided}.

If \(\rho_\ast\) denotes the maximizing radius, differentiation of \eqref{eq:radial-restoring-speed} yields \(5\rho_\ast^4 - 3(a^2+1)\rho_\ast^2 + a^2 = 0\), with relevant root
    $\rho_\ast^2
    =
    \frac{
        3(a^2+1)
        -
        \sqrt{9(a^2+1)^2-20a^2}
    }{10}$,
and therefore
    $\mu(A_0)
    =
    \frac{
        \rho_\ast
        (a^2-\rho_\ast^2)
        (1-\rho_\ast^2)
    }{a^2}$.
Changing \(a\) while leaving \(Df_a(0)=-I\) unchanged gives:
\[
\begin{array}{c|ccc}
    a & 0.45 & 0.60 & 0.70\\
    \hline
    \mu(A_0)
      &0.1620566373
      &0.2055508988
      &0.2304804052 .
\end{array}
\]
Hence, local eigenvalues alone cannot determine intensity.

For a control bound \(r<\mu(A_0)\), let $\rho_-(r)$ denote the smaller positive root of $q_a(\rho)=r$, which determines the outermost radius reachable without crossing the restoring barrier. The reachable set from \(A_0\) is then the disk
    $\overline{\mathcal P_r(A_0)}
    =
    \overline B_{\rho_-(r)}(0)$.
(The union over amplitudes \(r'<r\) of the disks of radius \(\rho_-(r')\) has this closure because \(r'\mapsto\rho_-(r')\) is continuous and increasing below the fold.) The boundary system of Section~\ref{subsec:continuous-boundary-system} reduces exactly to the radial equation: along a radial extremal, \(n=z/\|z\|\):
\begin{equation}\label{eq:radial-boundary-system}
\dot z=f_a(z)+rn,\qquad \dot n=0.
\end{equation}
Because the Jacobian preserves radial directions, the adjoint direction remains constant after normalization, yielding $\dot n=0$. At the critical amplitude, $r=\mu(A_0)=q_a(\rho_*)$, the two solutions of $q_a(\rho)=r$ coalesce at the maximizer $\rho_*$; for \(r>\mu(A_0)\), the radial extremal crosses the basin boundary (Figure~\ref{sfig:radial}).

\begin{figure}[htbp]
    \centering
    \includegraphics[width=0.5\linewidth]{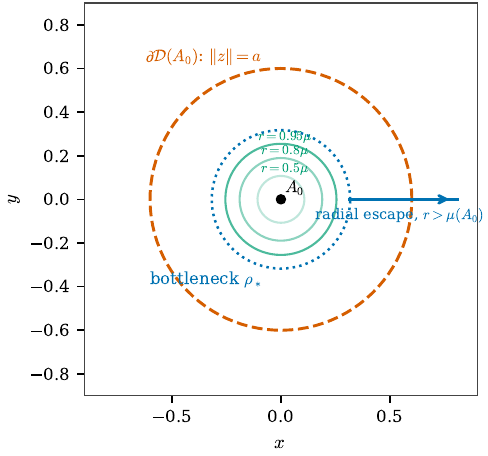}
    \caption{Radially symmetric verification case with \(a=0.6\). Reachable-set boundaries (circles of radius $\rho_-(r)$) for increasing control amplitudes approach the bottleneck circle $\rho_\ast$; for $r>\mu(A_0)$ the radial extremal \eqref{eq:radial-boundary-system} crosses the basin boundary
    \(\|z\|=a\).}
    \label{sfig:radial}
\end{figure}

Let \(\Phi_h(\rho)\) denote the time-\(h\) map of \eqref{eq:radial-scalar-equation}. Since \(\|\phi^h(z)\|=\Phi_h(\|z\|)\) and \(\|z+e\|\leq\|z\|+\|e\|\), the radius along any perturbed orbit is dominated by the extremal radial recursion \(\rho_{k+1}=\Phi_h(\rho_k)+\varepsilon\), which outward radial kicks realize; the exact-map discrete intensity is therefore
    $\mu_h(A_0)
    =
    \max_{0<\rho<a}
    \bigl(\rho-\Phi_h(\rho)\bigr)$.
For \(a=0.6\), the resulting normalized values are:
\[
\begin{array}{c|cc}
    h & \mu_h(A_0)/h
      & |\mu_h(A_0)/h-\mu(A_0)|\\
    \hline
    0.400 &0.2040412035&1.5097\times10^{-3}\\
    0.200 &0.2051704530&3.8045\times10^{-4}\\
    0.100 &0.2054555963&9.5302\times10^{-5}\\
    0.050 &0.2055270612&2.3838\times10^{-5}\\
    0.025 &0.2055449387&5.9601\times10^{-6}.
\end{array}
\]
As in the scalar smooth benchmark, the observed second-order convergence (slope \(2.00\) at \(a=0.6\)) is a special cancellation of this smooth radial system and does not contradict the general first-order estimate of Theorem~\ref{thm:sampling}.

\subsection{A genuinely planar escape computation: validating the boundary system on the damped Duffing system}
\label{subsec:duffing-escape}

The radial example demonstrated that intensity contains global basin information beyond local stability. However, that example still admitted a one-dimensional reduction. We now consider the damped Duffing system as a genuinely planar test case where the optimal escape direction is not known a priori \citep{guckenheimer2013nonlinear,thompson2002nonlinear,kovacic2011duffing}
\begin{equation}
    \dot x=y,\qquad \dot y=x-x^3-\delta y,\qquad \delta=0.5,
    \label{eq:duffing-system}
\end{equation}
with attracting wells $(\pm1,0)$ and a saddle at the origin (eigenvalues $\lambda_u=0.7808$, $\lambda_s=-1.2808$; Figures~\ref{fig:duffing}--\ref{fig:energy}). The basin boundary of $A=\{(1,0)\}$ is $W^s(0,0)$, a spiral (backward time reverses the damping), computed by backward integration along $v_s$; a basin indicator was built on a $961^2$ grid ($\Delta x=0.005$ on $[-2.4,2.4]^2$) by forward integration to $t=30$. The attractor eigenvalues are
$\lambda_\pm=-0.25\pm1.3919\,i$.
Thus local linearization captures only the local spiral decay and frequency, whereas it contains no information about the global spiral geometry of the basin boundary.

\begin{figure}[htbp]
    \centering
    \includegraphics[width=0.5\linewidth]{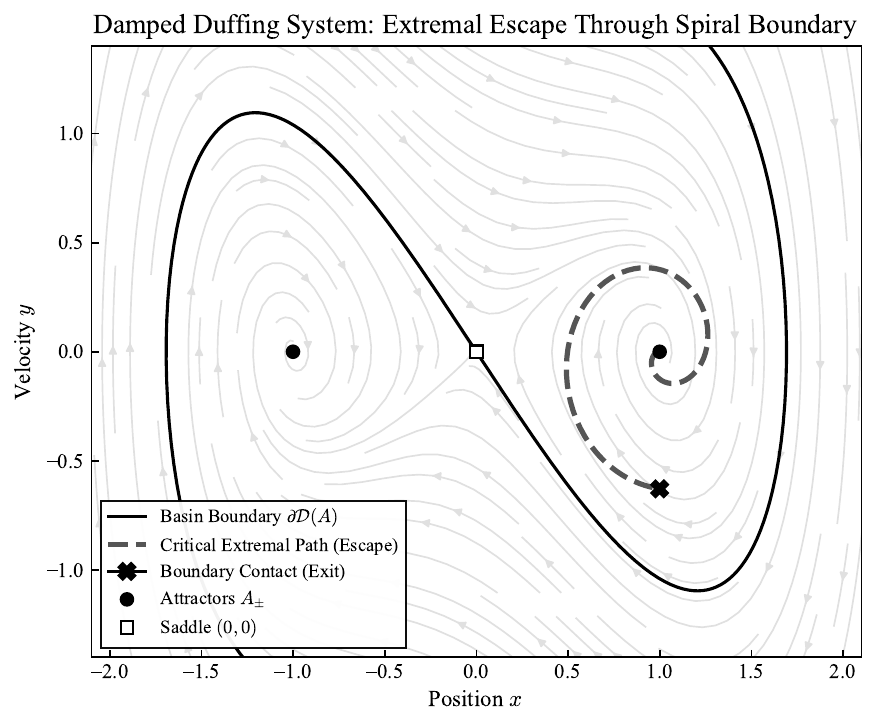}
    \caption{Damped Duffing system: extremal escape through spiral boundary.}
    \label{fig:duffing}
\end{figure}

The boundary system provides the primary computation of the intensity. We then validate this value using three independent checks. (i) \emph{Boundary
system}: integrating the state--normal system \eqref{eq:continuous-boundary-x}--\eqref{eq:continuous-boundary-n} from $(1,0)$ over $3600$ initial normals and bisecting in $r$ gives \begin{equation}\mu(A)=0.14309,\quad\text{bracket }[0.14305,0.14312].\label{eq:duffing-threshold}\end{equation} 
(ii) \emph{Extremal-control replay}: the critical extremal trajectory $u=r\,n(t)$ at $r=0.14380$ ($=1.005\mu$), replayed through DOP853 at tolerances $10^{-10}/10^{-12}$, crosses $W^s(0,0)$---an independent-integrator verification pinning $0.14305\le\mu(A)\le0.14380$. 
(iii) \emph{Grid set propagation}: iterating the sampled reachable set (flow map plus $hr$-dilation) with bisection yields resolution brackets $[0.1335,0.1459]$ ($h=0.2$) and $[0.1249,0.1502]$ ($h=0.1$) from the $O(\Delta x/h)$ cell-center bias, both containing $0.14309$. A fourth method, direct multi-start Nelder--Mead
optimization of a $32$-node control, gives the upper bound $\mu(A)\le0.1506$ (within $5.3\%$); the gap is a parametrization limit---the optimal direction co-rotates with the adjoint at period $2\pi/\omega_\delta\approx4.5$---not a discrepancy, as the replay shows.

\emph{Sampling.} The discrete boundary map \eqref{eq:discrete-boundary-map}
(with $D\phi^h$ from the variational equation) gives $\mu_h/h=0.14285, 0.14366, 0.14312, 0.14366, 0.14352$ at $h=0.4,0.2,0.1,0.05,0.025$ ($\pm5\times10^{-4}$), all agreeing with \eqref{eq:duffing-threshold} over a
sixteenfold range (Figure~\ref{sfig:duffconv}); the sampled boundary-map orbit at $h=0.1$ overlays the continuous extremal escape path (Figure~\ref{fig:duffing-phase}), illustrating Theorem~\ref{thm:boundary-limit}. \emph{Local indicators miss this.} The escape path spirals against the local rotation and threads the saddle bottleneck; at $\delta=0.2$ the boundary system gives $\mu(A)=0.0616$ while $\operatorname{Re}\lambda=-0.10$---intensity is set by the global restoring geometry, not the local decay rate, exactly as the radial family (Section~\ref{subsec:planar-radial-example}) shows with identical linearizations but different intensities.

\begin{figure}[htbp]
    \centering
    \includegraphics[width=0.5\linewidth]{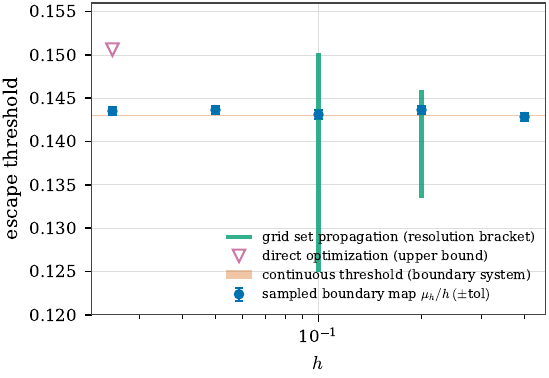}
    \caption{Duffing escape threshold: sampled boundary-map values $\mu_h/h$ (dots, $\pm$ sweep tolerance), grid-propagation resolution brackets, and the direct-optimization upper bound, against the continuous boundary-system value (band). Escape at $r=0.14380$ is additionally verified by the DOP853 control
    replay.}
    \label{sfig:duffconv}
\end{figure}

\begin{figure}[htbp]
    \centering
    \includegraphics[width=0.5\linewidth]{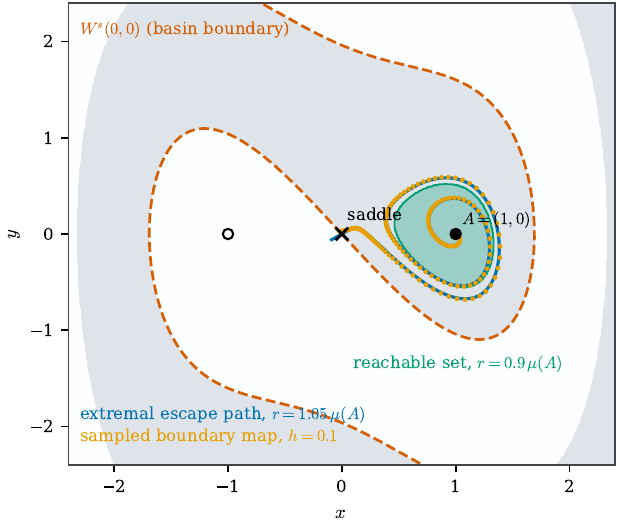}
    \caption{Damped Duffing system \eqref{eq:duffing-system}. Shaded: basin of $(-1,0)$; dashed: the spiral basin boundary $W^s(0,0)$; green: perturbed reachable set from $A=(1,0)$ at $r=0.9\,\mu(A)$ ($h=0.1$ grid propagation); blue: continuous extremal escape path at $r=1.05\,\mu(A)$; orange dots: sampled
    boundary-map orbit at $h=0.1$, overlaying the continuous extremal.}
    \label{fig:duffing-phase}
\end{figure}

\subsection{Anisotropic disturbances: disturbance geometry as a resilience metric}
\label{subsec:anisotropic-example}

The previous Duffing example quantified the resilience of a basin under isotropic perturbations. In many applications, however, disturbances are directional: different state components may experience different forcing magnitudes. The natural object is therefore not the Euclidean intensity but the $C$-intensity associated with a prescribed disturbance body. We illustrate this effect on the Duffing system using elliptical disturbance sets
\[
C_s=\left\{(u,v):
(u/s_x)^2+(v/s_y)^2\le1\right\},
\]
with support function $h_C(n)=\sqrt{s_x^2 n_x^2+s_y^2 n_y^2}$ and $\nabla h_C(n)=(s_x^2 n_x,\,s_y^2 n_y)/h_C(n)$, by integrating the anisotropic boundary system, $\dot x=f(x)+r\nabla h_C(n)$, $\dot n=-(I-nn^\top)Df(x)^\top n$, swept over initial normals with bisection in $r$ for first contact with $W^s(0,0)$.

The isotropic body $s_x=s_y=1$ recovers the Euclidean value \eqref{eq:duffing-threshold} to $0.35\%$ (computed $0.1436$), validating the anisotropic boundary integration. The disturbance geometry then changes the intensity measurably:
\[
\begin{array}{c|ccc}
    C & \text{isotropic} & \text{velocity-elongated } (s{=}(0.6,1.4))
      & \text{position-elongated } (s{=}(1.4,0.6))\\
    \hline
    \mu_C(A) & 0.1436 & 0.1398 & 0.1323
\end{array}
\]
Increasing the disturbance extent in a direction that aligns with the escape corridor reduces the required amplitude, whereas elongating the orthogonal direction has a weaker effect. Every value lies in the comparison interval $[\mu(A)/R,\ \mu(A)/\rho]=[0.102,0.238]$ with $\rho=0.6$, $R=1.4$ (Figure~\ref{fig:anisotropic}). Position-aligned disturbances leave the vegetated-analogue state least resilient here, because the escape channel near the saddle is more nearly aligned with the position axis---an effect invisible to the Euclidean intensity alone. An independent anisotropic grid set-propagation (dilating the reachable set by $hrC$ through an anisotropic Euclidean distance transform) brackets each value to grid resolution, and the sampled anisotropic boundary map---replacing the kick by $hr\,\nabla h_C(\eta_h)$---gives $\mu_{h,C}/h$ agreeing with $\mu_C$ to within the sweep tolerance $5\times10^{-4}$ across $h\in\{0.4,0.2,0.1,0.05\}$, confirming the gauge form of the sampling theorem on a genuinely
two-dimensional anisotropic problem.

\begin{figure}[htbp]
    \centering
    \includegraphics[width=0.5\linewidth]{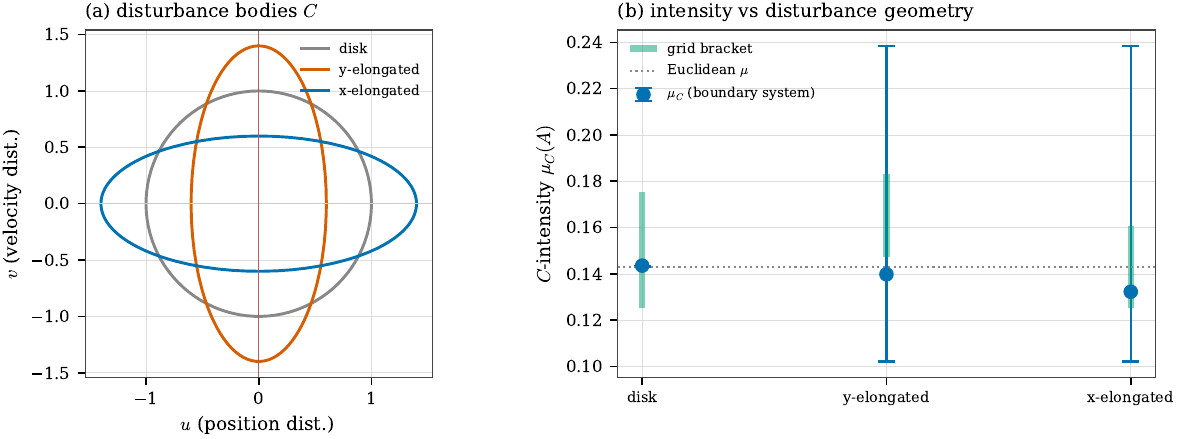}
    \caption{Anisotropic $C$-intensity of the Duffing attractor. (a) Elliptical disturbance bodies with different directional weights. (b) Computed intensities $\mu_C(A)$ from the anisotropic boundary system,
together with comparison bounds from the inclusion $\rho B\subset C\subset RB$. Directional disturbance geometry changes the escape threshold even for the same dynamical system.
}
    \label{fig:anisotropic}
\end{figure}

\subsection{A nonautonomous invariant graph}
\label{subsec:nonautonomous-computation}

We next consider
    $\dot x
    =
    x-x^3+\alpha\cos\theta$,
    $\dot\theta=\omega
    \pmod{2\pi}$.
A single angular frequency gives periodic rather than genuinely quasiperiodic forcing.  We use this one-frequency system because its critical intensity can be independently verified through a periodic-orbit
fold. It is a nonautonomous invariant-graph example over the compact base \(\mathbb S^1\), and the same graph-transform and sampling procedures apply to a genuinely quasiperiodic base.

We take as the primary case
    $\alpha=0.1$,
    $\omega=1$,
    $T=2\pi$.
The system has an attracting upper invariant graph
    $\mathcal A_+
    =
    \{(\theta,a_+(\theta)):\theta\in\mathbb S^1\}$
and a repelling middle graph that forms the fiberwise basin boundary. The graph \(a_+\) was computed as the attracting fixed point of the period-\(T\) graph transform.

For the controlled system
    $\dot x
    =
    x-x^3+0.1\cos t+u(t)$,
    $\|u(t)\|\leq r$,
order preservation implies that the lowest reachable boundary is generated by \(u\equiv-r\).  The intensity is therefore the smallest \(r\) for which the attracting upper periodic orbit of $\dot x=x-x^3+0.1\cos t-r$
collides with the repelling periodic orbit.  At the fold, the critical periodic solution \(x_\ast(t)\) satisfies $x_\ast(T)=x_\ast(0)\exp\left(
        \int_0^T
        \bigl(1-3x_\ast(t)^2\bigr)\,dt
    \right)=1$.
The fold was located by bisection (tolerance $10^{-12}$) on the maximum of $P_r(x)-x$ over the escape interval, with the period map $P_r$ computed by fixed-step RK4 ($4\times10^4$ steps per period) and
the maximizer refined locally; the critical point was then polished by secant iteration on the variational multiplier. The result is
\begin{equation}
    \mu_{\mathrm{na}}(\mathcal A_+)
    =
    0.3763812519\ldots,
    \label{eq:forced-continuous-intensity}
\end{equation}
with verified residuals
\[
    |x_\ast(T)-x_\ast(0)|
    =
    1.8\times10^{-12},
    \qquad
    \left|
        e^{\int_0^T(1-3x_\ast^2)\,dt}-1
    \right|
    =
    2.9\times10^{-12},
\]
both below the acceptance level $10^{-10}$.

The averaged autonomous equation is $\dot{\bar x}=\bar x-\bar x^3$, whose upper attractor has intensity
    $\mu_{\mathrm{av}}
    =
    \frac{2}{3\sqrt3}
    =
    0.3849001795\ldots$.
Thus averaging overestimates the nonautonomous intensity by approximately \(2.21\%\):
    $\frac{
        \mu_{\mathrm{av}}
        -
        \mu_{\mathrm{na}}
    }{
        \mu_{\mathrm{av}}
    }
    =
    0.02213\ldots$.
Although the attracting forced graph remains close to \(x=1\), the forcing changes the global bottleneck between the attracting and repelling graphs. This is precisely the type of information that a local averaging estimate near \(\mathcal A_+\) does not control.

The attracting graph stays close to $x=1$ ($0.9528\le a_+(t)\le1.0427$); the response $R_1=a_+-1$ has near-cancelling signed mean $-1.51\times10^{-3}$ but running absolute mean $R_2(T)=2.86\times10^{-2}$, detecting the phase-dependent discrepancy that averaging misses. The controlled tube at $r=0.95\mu_{\mathrm{na}}$ ($u\equiv\mp r$) has boundaries $a_r^-\in[0.5845,0.7700]$, $a_r^+\in[1.1122,1.1768]$ (Figure~\ref{fig:nonautonomous-intensity}(a)).

\begin{figure}[htbp]
  \centering
  \includegraphics[width=0.6\textwidth]{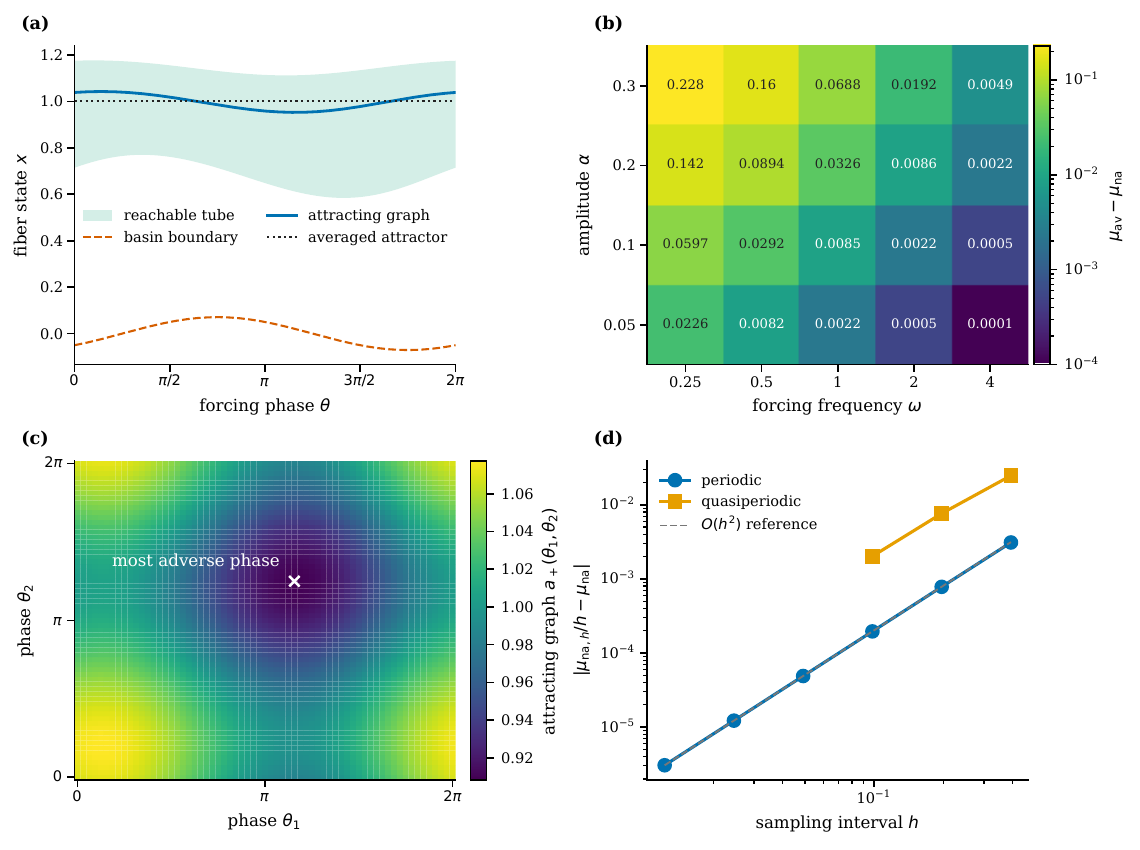}
  \caption{Uniform nonautonomous intensity and the limitation of averaging.
  (a)~For periodic forcing, the attracting graph remains close to the averaged
  attractor, but the controlled tube approaches the repelling basin-boundary
  graph.  (b)~The averaging error grows for stronger, slower forcing.
  (c)~The quasiperiodic attracting graph over $\mathbb T^2$ exposes a worst
  forcing phase that no one-phase Poincar\'e section can represent.
  (d)~Normalized sampled intensities converge for both periodic and genuinely
  quasiperiodic bases.}
  \label{fig:nonautonomous-intensity}
\end{figure}

We discuss sampled thresholds and phase uniformity.

Let \(h=T/N\).  
For the extremal sampled map
    $x_{k+1}
    =
    \varphi(h,\theta_k,x_k)-\varepsilon$, the threshold \(\varepsilon_*\) is the fold of the \(N\)-step
Poincar\'e map, located by the same refined bisection ($32$ RK4 substeps, tolerance $10^{-11}$); the
$N=16$--$512$ results (slopes $2.00$) are in Table~\ref{stab:naconv}.

\begin{table}[htbp]
    \centering
    \caption{Convergence of the normalized discrete nonautonomous
    intensity.  Here \(h=2\pi/N\) and
    \(p=\log_2(E_N/E_{2N})\).}
    \label{stab:naconv}
    \begin{tabular}{ccccc}
        \toprule
        \(N\)
        &
        \(h\)
        &
        \(\mu_{\mathrm{na},h}/h\)
        &
        absolute error
        &
        observed slope
        \\
        \midrule
        \(16\)  & \(0.392699\) & \(0.373285269\)
                & \(3.0960\times10^{-3}\) & --\\
        \(32\)  & \(0.196350\) & \(0.375600828\)
                & \(7.8042\times10^{-4}\) & \(1.99\)\\
        \(64\)  & \(0.098175\) & \(0.376185739\)
                & \(1.9551\times10^{-4}\) & \(2.00\)\\
        \(128\) & \(0.049087\) & \(0.376332348\)
                & \(4.8904\times10^{-5}\) & \(2.00\)\\
        \(256\) & \(0.024544\) & \(0.376369024\)
                & \(1.2227\times10^{-5}\) & \(2.00\)\\
        \(512\) & \(0.012272\) & \(0.376378195\)
                & \(3.0570\times10^{-6}\) & \(2.00\)\\
        \bottomrule
    \end{tabular}
\end{table}

As in the autonomous scalar example, the observed rate is second order. The general nonautonomous sampling theorem guarantees \(O(h)\); the additional cancellation here is associated with the smooth fold of scalar extremal graphs, in analogy with Corollary~\ref{cor:scalar-second-order} in the autonomous case.
Repeating the \(N=32\) computation from eight initial phases \(\theta_0=jh\), \(j=0,\dots,7\), reproduces the threshold with spread \(1.0\times10^{-11}\), numerically confirming the exact phase-invariance that conjugation of the sampled maps predicts, and consistent with the uniformity over the base built into
Definition~\ref{def:nonautonomous-intensity}.

We perform averaging parameter sweeps.

To map out when averaging preserves the escape threshold, the fold computation was repeated over $\alpha\in\{0.05,0.1,0.2,0.3\}$ and $\omega\in\{0.25,0.5,1,2,4\}$ (Table~\ref{tab:averaging-sweep} and Figure~\ref{fig:nonautonomous-intensity}(b)). The averaged intensity is $\mu_{\mathrm{av}}=2/(3\sqrt3)$ for every $\alpha$, so the tabulated averaging error $\mu_{\mathrm{av}}-\mu_{\mathrm{na}}(\alpha,\omega)$ is entirely a nonautonomous effect. This deviation grows with the forcing amplitude and decays with the forcing frequency: the averaged threshold is accurate for weak or fast forcing, but for strong slow forcing, the system quasi-statically tracks the instantaneously tilted potential. 
In this regime, the worst forcing phase erodes the threshold far below the averaged value.
This serves as numerical evidence delimiting the validity of averaged thresholds, consistent with the conditional results of Section~\ref{subsec:averaging-limitations}.

\begin{table}[t]
    \centering
    \caption{Averaging error
    \(\mu_{\mathrm{av}}-\mu_{\mathrm{na}}(\alpha,\omega)\) of the
    escape threshold (numerical evidence; fold tolerance
    $10^{-9}$).}
    \label{tab:averaging-sweep}
    \begin{tabular}{cccccc}
        \toprule
        & \(\omega=0.25\) & \(\omega=0.5\) & \(\omega=1\)
        & \(\omega=2\) & \(\omega=4\)\\
        \midrule
        \(\alpha=0.05\) & \(0.0226\) & \(0.0082\) & \(0.0022\)
                        & \(0.0005\) & \(0.0001\)\\
        \(\alpha=0.1\)  & \(0.0597\) & \(0.0292\) & \(0.0085\)
                        & \(0.0022\) & \(0.0005\)\\
        \(\alpha=0.2\)  & \(0.1416\) & \(0.0894\) & \(0.0326\)
                        & \(0.0086\) & \(0.0022\)\\
        \(\alpha=0.3\)  & \(0.2276\) & \(0.1596\) & \(0.0688\)
                        & \(0.0192\) & \(0.0049\)\\
        \bottomrule
    \end{tabular}
\end{table}

We discuss genuinely quasiperiodic graphs on the two-torus.

Periodic forcing reduces the fiberwise intensity to a Poincar\'e-map fold, so it does not exercise the two-torus structure that Theorem~\ref{thm:nonautonomous-sampling} was built for. We therefore treat
\begin{equation}
    \dot x=x-x^3+\alpha(\cos\theta_1+\cos\theta_2),\qquad
    \dot\theta_1=1,\quad \dot\theta_2=\omega_2,
    \label{eq:quasiperiodic-system}
\end{equation}
with $\alpha=0.1$ and $\omega_2=(1+\sqrt5)/2$ (the golden mean, irrational number). The attracting upper graph $a_+(\theta_1,\theta_2)$ and the repelling middle graph $a_{\mathrm m}$ live over $\mathbb T^2$ and are
computed by iterating the graph transform on a $64\times64$ torus grid, with the base shift $(\theta_1,\theta_2)\mapsto(\theta_1-\Delta,\theta_2-\omega_2 \Delta)$ applied by an exact spectral (FFT) shift and each fiber step integrated by RK4; both graphs are iterated to a fixed point ($\|a_{n+1}-a_n\|_\infty<10^{-11}$), which is essential for a $\Delta$-independent threshold. Under the extremal control $u\equiv-r$ the continued upper attracting graph collides with the repelling graph, and the uniform fiberwise intensity is
    $\mu_{\mathrm{na}}(\mathcal A_+)
    =\inf\Bigl\{r:\min_{\mathbb T^2}\bigl(a_+^{\,r}-a_{\mathrm m}^{\,r}\bigr)=0\Bigr\}$,
uniform over the base by construction. As a check on the whole apparatus, setting the second amplitude to zero recovers the one-frequency Poincar\'e fold \eqref{eq:forced-continuous-intensity} to $2\times10^{-5}$ (computed $0.376388$ versus $0.3763813$). For the genuinely quasiperiodic case \eqref{eq:quasiperiodic-system},
\begin{equation}
    \mu_{\mathrm{na}}(\mathcal A_+)=0.37282,
    \qquad
    \frac{\mu_{\mathrm{av}}-\mu_{\mathrm{na}}}{\mu_{\mathrm{av}}}=3.14\%,
    \label{eq:quasi-value}
\end{equation}
a larger averaging deficit than the single-frequency case because two incommensurate phases produce a more adverse worst-case combination; the attracting graph, though confined to $0.908\le a_+\le1.077$, has a
phase-dependent separation from the basin boundary ranging over $[0.842,1.154]$, and the smallest such separation---not the mean---sets the intensity (Figure~\ref{fig:nonautonomous-intensity}(c)). Sampling the flow gives $\mu_{\mathrm{na},h}/h=0.3481,0.3652,0.3708$ at $h=2\pi/16,2\pi/32,2\pi/64$, converging to \eqref{eq:quasi-value} with observed slopes approaching two (Figure~\ref{fig:nonautonomous-intensity}(d)), confirming Theorem~\ref{thm:nonautonomous-sampling} on a genuinely quasiperiodic base.

\subsection{Bounded-noise escape: worst case versus probability}
\label{subsec:bounded-noise-experiment}

We finally return to the scalar benchmark and simulate
    $X_{k+1}=F_h(X_k)+\xi_k$,
    $X_0=1$,
    $\operatorname{supp}\xi_k=[-hr,hr]$.
For each \(h\), the exact pathwise threshold is
    $r_{\mathrm{esc}}(h)
    =
    \frac{\mu_h(A)}h$,
with \(\mu_h(A)\) given by \eqref{eq:scalar-exact-discrete-intensity}.  We test \(r/r_{\mathrm{esc}}(h)\in\{0.95,1,1.025,1.05,1.10,1.20\}\) at \(h\in\{0.2,0.1,0.05\}\), over the horizon \(T_{\max}=200\), and
separate two computations.

We conduct worst-case experiments.

The deterministic realization \(\xi_k\equiv-hr\) verifies the threshold exactly.  For \(r/r_{\mathrm{esc}}\in\{0.95,1\}\) no escape occurs at any tested \(h\); above the threshold escape occurs after finitely many steps (at \(h=0.1\): \(223,\ 151,\ 101,\ 66\) steps for the four supercritical ratios).  At equality the orbit approaches the nonhyperbolic bottleneck \(x_h^\ast\) (with \((x_h^\ast)^2=(e^{2h/3}-1)/(e^{2h}-1)\)) without crossing it: at \(T_{\max}=200\) the distance is \(|X_N-x_h^\ast|\approx2.8\times10^{-3}\) at every tested \(h\), consistent with the algebraically slow approach to a parabolic tangency. In particular, the supremum defining \(\mu_h(A)\) is attained in this example: at \(\varepsilon=\mu_h(A)\), the strict-inequality convention for kicks still confines every admissible
orbit behind the tangency point, as in Step~3 of the proof of Proposition~\ref{prop:scalar-one-sided}.

We conduct probabilistic experiments involving transfer operators and Monte Carlo simulations.

As the primary noise law we take uniform bounded noise, \(\xi_k\sim\operatorname{Unif}[-hr,hr]\); as a deliberately adverse stress test we retain the mixture
\begin{equation}
    \xi_k
    \sim
    \begin{cases}
        -hr, & \text{with probability }0.9,\\
        \operatorname{Unif}[-hr,hr],
            & \text{with probability }0.1,
    \end{cases}
    \label{eq:stress-test-noise-law}
\end{equation}
which biases every step toward escape.  Rather than relying on Monte Carlo alone, survival probabilities were computed from the absorbing transfer operator: with \(V_k(x):=\mathbb P_x\{\tau_{\mathcal D}>k\}\) and escape upon \(X\leq0\),
\[
    V_{k+1}(x)
    =
    \frac{1}{2hr}
    \int_{\max(\phi^h(x)-hr,\,0)}^{\phi^h(x)+hr}
    V_k(y)\,dy,
\]
iterated on a grid of \(n=4001\) nodes (the mixture law adds the point-mass term \(0.9\,V_k(\phi^h(x)-hr)\)); the state space is bounded above because \(\phi^h\) is bounded, so no truncation error enters from the right.  The operator resolves probabilities down to the floating-point floor \(10^{-12}\).  Grid refinement at the representative point (\(h=0.1\), mixture, \(r/r_{\mathrm{esc}}=1.05\)) gives escape probabilities \(9.087\times10^{-2}\), \(8.397\times10^{-2}\), and \(8.357\times10^{-2}\) at \(n=1001,4001,16001\), converging at first order in the grid.  Monte Carlo validation used \(10^5\) trajectories per case, with Wilson score intervals.

\begin{table}[t]
    \centering
    \caption{Escape probabilities \(\mathbb P(\tau\leq200)\) under the adverse mixture \eqref{eq:stress-test-noise-law}: transfer operator (op) at three sampling steps, and Monte Carlo with \(95\%\) Wilson intervals at \(h=0.1\).  Entries \(0\) are below the operator floor \(10^{-12}\). Under the uniform law, all entries of the corresponding table are below \(10^{-12}\), and no escape occurred in \(10^5\) Monte Carlo trajectories for any ratio up to \(1.20\) (Wilson upper bound \(3.8\times10^{-5}\)).}
    \label{tab:bounded-noise-probabilities}
    \small
    \begin{tabular}{l|rrrrrr}
        \toprule
        \(r/r_{\mathrm{esc}}(h)\)
        & \(0.95\) & \(1.00\) & \(1.025\) & \(1.05\) & \(1.10\)
        & \(1.20\)\\
        \midrule
        op, \(h=0.2\)  & \(0\) & \(0\) & \(2.44\times10^{-2}\)
            & \(0.609\) & \(1.000\) & \(1.000\)\\
        op, \(h=0.1\)  & \(0\) & \(0\) & \(5.99\times10^{-5}\)
            & \(8.40\times10^{-2}\) & \(0.9988\) & \(1.000\)\\
        op, \(h=0.05\) & \(0\) & \(0\) & \(5.6\times10^{-10}\)
            & \(8.21\times10^{-4}\) & \(0.9813\) & \(1.000\)\\
        \midrule
        MC, \(h=0.1\)  & \(0\) & \(0\) & \(2.0\times10^{-5}\)
            & \(8.38\times10^{-2}\) & \(0.9988\) & \(1.000\)\\
        Wilson \(95\%\) & \(\leq3.8\!\times\!10^{-5}\)
            & \(\leq3.8\!\times\!10^{-5}\)
            & \([0.5,7.3]\!\times\!10^{-5}\)
            & \([8.21,8.56]\!\times\!10^{-2}\)
            & \([.9985,.9990]\) & \(1.000\)\\
        \bottomrule
    \end{tabular}
\end{table}

Table~\ref{tab:bounded-noise-probabilities} shows three features. (i) Corollary~\ref{cor:bounded-noise} is visible in the first two columns: no realization escapes at or below the pathwise threshold, for either law,
by either method. (ii) Under the \emph{uniform} law no escape is resolvable even at $1.2\,r_{\mathrm{esc}}(h)$---operator $<10^{-12}$, no escape in $10^5$ trajectories: an admissible escape word exists (length $66$ at $h=0.1$) but realizing $66$ consecutive near-extreme uniform kicks is unobservably rare, the worst-case/probabilistic gap of Section~\ref{subsec:bounded-random-perturbations} quantified. (iii) Under
the adverse mixture the transition is smooth and sharpens as $h\downarrow$; operator and Monte Carlo agree within the Wilson intervals, with conditional mean exit times $114.3,109.6,41.8,11.3$ for the four supercritical ratios, and post-threshold escape can be slow. Survival curves $\widehat S(t)=\mathbb P(\tau>t)$ for the adverse mixture can be seen from Figure~\ref{fig:bounded-noise}.

\begin{figure}[htbp]
  \centering
  \includegraphics[width=0.6\textwidth]{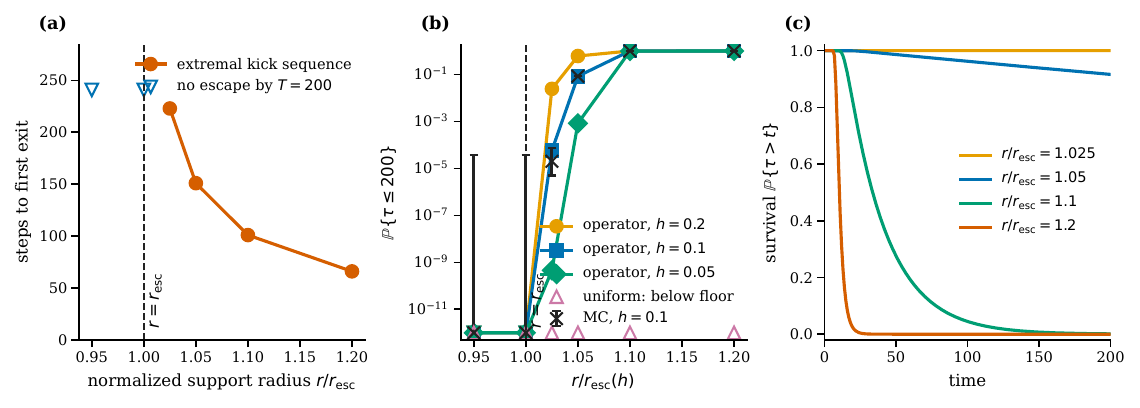}
  \caption{Worst-case and probabilistic bounded-noise escape for the scalar
  benchmark.  (a)~The extremal admissible kick sequence cannot escape at or
  below $r_{\rm esc}(h)$ and escapes in finite time above it.  (b)~Finite-horizon
  escape probabilities depend strongly on the noise law and sampling interval;
  transfer-operator values agree with Monte Carlo confidence intervals.
  (c)~Survival curves show that the post-threshold transition can remain slow.
  Thus the support threshold is pathwise sharp but is not itself an
  almost-sure or finite-horizon probability threshold.}
  \label{fig:bounded-noise}
\end{figure}

\section{Conclusions}
\label{subsec:conclusions}

This work places the intensity of attraction on a common quantitative footing for continuous flows, sampled maps, numerical discretizations, nonautonomous systems, and bounded-support random perturbations. Three main conclusions emerge.

\emph{First, normalized discrete intensity provides a consistent approximation of continuous-time resilience.}
Under the compactness, trapping, and Lipschitz hypotheses of the sampling theorem, the intensity of the time-$h$ map satisfies $\frac{\mu(A)}{1+Lh}
    \leq
    \frac{\mu_h(A)}{h}
    \leq
    \mu(A)\frac{e^{Lh}-1}{Lh}$,
and therefore
$\frac{\mu_h(A)}{h}\to\mu(A)$ as $h\downarrow0$.
The normalization by \(h\) is essential: \(\mu_h(A)\) measures displacement per sampling step, whereas \(\mu(A)\) measures disturbance amplitude per unit time. The general \(O(h)\) estimate is sharp, as demonstrated by the kinked scalar example. Under the smooth nondegeneracy assumptions of Corollary~\ref{cor:scalar-second-order}, the leading first-order term cancels and the normalized intensity converges at second order. For numerical one-step methods, the sampling error and the integrator error must be distinguished: on a common trapping block, and under the required approximation of the distinguished attractor,
$\frac{\mu_{B,h}^{\Psi}}{h}
    =
    \mu_B(A)+O(h)+O(h^q)$
for a method of order \(q\). The planar Duffing computation further shows that boundary-system integration, sampled boundary maps, direct optimization, control replay, and grid propagation identify the same escape threshold within their stated tolerances. The associated spiral extremal path is determined by global basin geometry and is invisible to the local eigenvalues of the attractor. Moreover, the small-step limit of the discrete boundary map recovers the Pontryagin boundary system, including its extension to convex anisotropic disturbance bodies through their support functions.

\emph{Second, the sampling principle extends uniformly to nonautonomous invariant graphs, but averaging does not automatically preserve intensity.}
For an attracting invariant graph over an invertible compact base flow, the fiberwise normalized intensity satisfies
$\frac{\mu_{\mathrm{na},h}(\mathcal A)}{h}
    \to
    \mu_{\mathrm{na}}(\mathcal A)$
uniformly over the forcing base. The periodic computations confirm phase independence to numerical precision, while the quasiperiodic example demonstrates that the result is genuinely uniform over a two-dimensional torus rather than merely a consequence of a Poincar\'e reduction. At the same time, intensity remains a global basin quantity. Long-time trajectory agreement near an attracting graph, hyperbolic continuation, or convergence to an averaged model does not by itself control the fiberwise basin boundary. In the family studied here, averaging is accurate for weak or rapidly varying forcing but increasingly overestimates the escape threshold for strong, slowly varying forcing. This is numerical evidence for a specific family, not a universal averaging theorem; rigorous transfer of intensity requires a common trapping block and quantitative control of both the sampled-map defect and the invariant-graph defect.

\emph{Third, bounded-support noise possesses an intrinsic pathwise escape threshold, but that threshold is not a probability law.}
For
$X_{k+1}=\phi^h(X_k)+\xi_k$, $\operatorname{supp}\xi_k\subseteq hrC$,
the worst-case threshold is
$r_{\mathrm{esc},C}(h)
    =
    \frac{\mu_{h,C}(A)}{h}
    \to
    \mu_C(A)$.
If \(r<r_{\mathrm{esc},C}(h)\), every admissible realization remains in a common compact subset of the basin. If \(r>r_{\mathrm{esc},C}(h)\), an admissible finite escape sequence exists; no general assertion is made at equality. Converting this geometric possibility into positive-probability or almost-sure escape requires additional assumptions on the noise law, such as suitable support, independence, or a Markov minorization condition. The transfer-operator and Monte Carlo experiments make this distinction explicit: an adverse law produces observable escape above the threshold, whereas uniform bounded noise exhibits no resolvable escape over the chosen horizon even at \(20\%\) above it.

Taken together, these results identify intensity as a dimensionally consistent, computable, and genuinely global \(L^\infty\)-type resilience rate. It complements rather than replaces hyperbolicity, recovery-rate indicators, basin-stability measures, and quasipotential theory. For unbounded noise, a strict support-wise safety threshold generally disappears, and the appropriate objects are finite-horizon escape probabilities, exit-time distributions, quasipotentials, and large-deviation actions. Developing quantitative connections among these probabilistic quantities, sampled intensity, and data-driven resilience estimation is a natural direction for future work.

\bibliography{reference}

\end{document}